\documentclass[11pt,reqno,a4paper]{amsart}
\usepackage{color}
\usepackage{hyperref,cite}
\usepackage{amsthm}
\usepackage{setspace}
\usepackage{enumerate}
\makeatletter

\numberwithin{equation}{section}
\numberwithin{figure}{section}
\theoremstyle{plain}
\newtheorem{thm}{\protect\theoremname}[section]
  \newtheorem{rem}{\protect\remarkname}[section]
  \newtheorem{lem}{\protect\lemmaname}[section]
  \newtheorem{defn}{\protect\definitionname}[section]

\usepackage{amsfonts}
\usepackage{amsthm}
\usepackage{epsfig}

\@ifundefined{definecolor}
 {\usepackage{color}}{}

\makeatletter
\newcommand{\Rmnum}[1]{\expandafter\@slowromancap\romannumeral#1@}\makeatother

\numberwithin{equation}{section}

\allowdisplaybreaks

\newcommand{\defs}{:=}

\DeclareMathOperator{\di}{div}

\newcommand{\dif}{\mathrm{d}}

\DeclareSymbolFont{lettersA}{U}{pxmia}{m}{it}
\DeclareMathSymbol{\piup}{\mathord}{lettersA}{"19}

\newcommand{\mbnu}{\boldsymbol\nu}
\newcommand{\mbnn}{\mathbf{n}}
\newcommand{\mbtau}{\boldsymbol\tau}
\newcommand{\mbtheta}{\boldsymbol\vartheta}
\newcommand{\mba}{\mathbf{a}}

\makeatother

\makeatother

\usepackage{babel}
  \providecommand{\definitionname}{Definition}
  \providecommand{\lemmaname}{Lemma}
  \providecommand{\remarkname}{Remark}
\providecommand{\theoremname}{Theorem}

\begin{document}

\title[Geometric Structures of Pseudo-Sonic Curves]{Geometric Structures of Pseudo-Sonic Curves\\ in Self-Similar Solutions of \\ the Euler Equations for Potential Flow}

\author[G.-Q. Chen]{Gui-Qiang G. Chen}
\address{Gui-Qiang G. Chen:\, Mathematical Institute, University of Oxford, Oxford, OX2 6GG, UK}
\email{\texttt{gui-qiang.chen@maths.ox.ac.uk}}

\author[M. Feldman]{Mikhail Feldman}
\address{Mikhail Feldman:\, Department of Mathematics,
University of Wisconsin-Madison,
Madison, WI 53706,
USA}
\email{\texttt{feldman@math.wisc.edu}}

\author[X. Gao]{Xin Gao}
\address{Xin Gao: \, School of Mathematical Sciences, Key Laboratory of MEA (Ministry of Education) \& Shanghai Key Laboratory of PMMP, East China Normal University, Shanghai 200241, China}
\email{\texttt{xgao@math.ecnu.edu.cn}}

\author[W. Xiang]{Wei Xiang}
\address{Wei Xiang: \, Department of Mathematics, City University of Hong Kong, Kowloon, Hong Kong, China}
\email{\texttt{weixiang@cityu.edu.hk }}

\begin{abstract}
We are concerned with the geometric structures of pseudo-sonic curves
in two-dimensional self-similar solutions for
the Euler equations for potential flow, allowing for non-uniform supersonic states.
Mathematically, the governing second-order potential flow equation
is of mixed hyperbolic-elliptic type, with degeneracy
occurring along the pseudo-sonic curve.
In this paper, we develop rigorous analytical
approaches to analyze the geometric structures
of pseudo-sonic curves in such self-similar solutions.
We first show that the pseudo-sonic curve is necessarily a circle
if the pseudo-velocity at each point
is a normal to the curve.
We then analyze the general case in which the pseudo-velocity on the pseudo-sonic
point is not a normal to the curve,
and study the geometric properties
of streamlines
in a neighborhood of the pseudo-sonic curve.
Next, we establish two theorems that provide sufficient conditions ensuring that the pseudo-velocity at
a pseudo-sonic point is normal to the curve,
under natural assumptions on the local behavior of the solution.
These results yield a precise characterization of the geometry of pseudo-sonic curves.
Finally, we apply the developed theory to
the shock reflection-diffraction problem with non-uniform incoming flow.
We prove that the pseudo-sonic curve must be an arc if the solution is a $C^2$-small perturbation,
either in the pseudo-supersonic or pseudo-subsonic region, of a solution with uniform
incoming flow (as constructed in \cite{Chen-Feldman4}).
In particular, the density and velocity must be constant, corresponding to the
radius (with a fixed power)  and the center of the pseudo-sonic arc, respectively.
Moreover, we prove that the solution is
$C^{2,\alpha}$-regular in the pseudo-subsonic region up to the sonic arc (except at point $P_1$).
The techniques and ideas developed in this paper are expected to be applicable to other nonlinear
problems involving similar mixed-type degeneracies.
\end{abstract}

\keywords{Potential flow, pseudo-sonic curve, self-similar solutions, degeneracy,
supersonic states, subsonic states,
Euler equations, compressible flow,
mixed hyperbolic-elliptic type,
rigorous mathematical approaches,
{\it a priori} estimates}
\subjclass[2020]{\, 35M10, 35Q35, 35Q31, 35M12, 35J70, 35L80, 76H05, 35B45,
35L65, 35L67, 76N10,
35D30, 76J20, 35B65}

\date{\today}
\maketitle

\section{Introduction}
We are concerned with the geometric structures of pseudo-sonic curves
in two-dimensional (2-D) self-similar solutions of the Euler equations for potential flow,
allowing for non-uniform supersonic states.
Such self-similar solutions play a central role in many important applications,
including multidimensional (M-D) Riemann problems and shock reflection-diffraction problem.
They not only arise naturally in a wide range of physical experiments and applications, but also
are fundamental to the mathematical theory of M-D systems of conservation laws,
which remains largely unsolved.

Over the past decades, important progress has been made from a variety of perspectives.
In particular, extensive studies have been devoted to the shock reflection–diffraction problem,
pseudo-supersonic/sonic flow patches, and supersonic flows impinging on solid wedges;
see \cite{Bae-Chen-Feldman,Bae-Chen-Feldman2020,LBERS1950,LBERS1958,ChenDengXiang,Chen-Feldman2,
Chen-Feldman3,Chen-Feldman4,Chen-Feldman5,Chen-Feldman-Hu-Xiang,Chen-Feldman-Xiang,ChenSX,
ChenSX2008,ChenSYiC,EL2008,HJC,LaiSheng,ZhangYX2006,Mo} and the references therein.
In particular, the first global existence result for shock reflection-diffraction configurations
in potential flow was established in \cite{Chen-Feldman2} for large wedge angles $\theta_{\rm w}$.
Subsequently, the global existence, optimal regularity, convexity, and uniqueness of solutions
for all wedge angles $\theta_{\rm w}\in(\theta_{\rm c},\frac{\pi}{2})$ were obtained
in \cite{Chen-Feldman4,Bae-Chen-Feldman,Chen-Feldman-Xiang,Chen-Feldman-Xiang2}.
Further developments include the study of Prandtl-Meyer reflection configurations
in \cite{EL2008,Bae-Chen-Feldman2020}, the Lighthill problem in \cite{Chen-Feldman-Hu-Xiang},
the local stability of Mach reflection for pseudo-steady flow in \cite{ChenSX2008},
and the 2-D Riemann problem with four initial shocks in \cite{ChenCliffeHuangLiuWang}.
For self-similar structures involving rarefaction waves,
the method of characteristic decompositions has been successfully
developed and widely applied to the study of pseudo-supersonic/sonic flow patches;
see \cite{LaiSheng,LiZheng,SongWangZheng}.
In the context of 2-D steady potential flow,
some properties of the sonic curves, as well as  the existence, uniqueness, and regularity of a smooth steady transonic flow,
were established
in \cite{WangCP2016,WangCP2019,WangCP2021}.

For the shock reflection–diffraction problem with uniform supersonic incoming flow,
the pseudo-sonic curve is known to be a circle,
and the optimal regularity of solutions across this pseudo-sonic circle
was established in \cite{Bae-Chen-Feldman}.
It is therefore natural to investigate the geometric structure of the pseudo-sonic curve
when the incoming supersonic state is non-uniform.
To the best of our knowledge, no results in this direction are currently available.
A detailed understanding of pseudo-sonic curves is essential
for capturing the behavior of self-similar transonic flows,
which play a fundamental role in compressible fluid dynamics.
In this paper, we analyze the geometric structures of pseudo-sonic curves in more general settings.
As an application, we prove that,  for the regular shock reflection–diffraction problem,
the pseudo-sonic curve
must lie on a circle whenever the solution is a small perturbation,
in either the pseudo-supersonic or pseudo-subsonic region, of
the solution corresponding to the uniform incoming flow,
as constructed in \cite{Chen-Feldman4}.

To analyze the geometric structure of pseudo-sonic curves,
we first prove in Theorem \ref{theorem1} that the pseudo-sonic curve is
a circle whenever the velocity field is everywhere
normal to the curve.
Then we consider the case in which the velocity field is tangent to the pseudo-sonic curve everywhere.
Through a delicate analysis, we establish
two distinct results: Theorem \ref{theorem4} for the case $\partial_{\mathbf{n}}c=0$, and Theorem \ref{theorem3}
for the case $\partial_{\mathbf{n}}c\neq0$, where $\partial_{\mathbf{n}}$ denotes the normal derivative
along the pseudo-sonic curve.
These theorems show that the pseudo-sonic curve must either be a straight line or be convex
when viewed from the pseudo-supersonic region.
Finally, applying the preceding arguments and results, we characterize
the geometric properties of streamlines
near the pseudo-sonic curve in Theorem \ref{streamtheorem}.

Next, we analyze the behavior of solutions near the pseudo-sonic curve
in order to establish Theorems \ref{theoremnew}--\ref{theoremcone}.
Under natural conditions motivated by solution structures with uniform pseudo-supersonic states,
we prove that the velocity field at any pseudo-sonic point $Q\in \Gamma_{\rm sonic}$ must be normal
to the pseudo-sonic curve $\Gamma_{\rm sonic}$.
Consequently, by Theorem \ref{theorem1}, the pseudo-sonic curve $\Gamma_{\rm sonic}$ must be an arc,
even though it is assumed only to be a $C^{1,1}$-curve.
In contrast to Theorems \ref{theorem1}--\ref{theorem3},
no {\it a priori} assumption is imposed in Theorems \ref{theoremnew}--\ref{theoremcone}
that the pseudo-velocity is normal or tangential to $\Gamma_{\rm sonic}$.
A key ingredient in the analysis is the introduction of a new function $\psi^Q$ (see \eqref{psiQQIN}).
Using this function, Theorem \ref{theoremnew} is proved through a careful analysis of the second-order derivatives of  $\psi^Q$
from both sides of the sonic curve $\Gamma_{\rm sonic}$, exploiting the structure of the governing equation
and the geometry of $\Gamma_{\rm sonic}$.
In addition, we prove that \emph{the solution is $C^2$ across $\Gamma_{\rm sonic}$ near a point $Q$ if the pseudo-velocity at $Q\in\Gamma_{\rm sonic}$
is not a normal to $\Gamma_{\rm sonic}$ and  ${\partial}^{\perp}c(Q)=0$}; see Remark \ref{rem:6.1x}.
To establish Theorem \ref{theoremcone}, we carefully construct a
barrier function (see \eqref{PsiQ==}) in the pseudo-subsonic region
under the assumption that the pseudo-velocity at $Q\in\Gamma_{\rm sonic}$ is not a normal to $\Gamma_{\rm sonic}$.
A contradiction argument is then developed to show that, under natural conditions,
the pseudo-velocity at every point $Q\in\Gamma_{\rm sonic}$ must in fact be normal to $\Gamma_{\rm sonic}$.

As an application of the general framework to the shock reflection-diffraction problem with non-uniform supersonic
incoming flow, we establish a connection between the behavior of solutions near the pseudo-sonic curve and
the geometric structure of the pseudo-sonic curve in the regular shock reflection-diffraction configuration.
More precisely, we prove that \emph{the pseudo-sonic curve must be an arc},
if the solution near the pseudo-sonic curve is a $C^2$-perturbation of the solution
constructed in \cite{Chen-Feldman4} in either pseudo-supersonic (hyperbolic) or pseudo-subsonic (elliptic) region.
Furthermore, we show that \emph{the density and velocity must be constant on the sonic arc, with the radius and center of the pseudo-sonic
arc given respectively by the sonic speed and velocity, whenever the solution is a small perturbation of the solutions
constructed in {\rm \cite{Chen-Feldman4}}}; see Remark \ref{rem2.1x}.
As a consequence, the solution in the pseudo-subsonic region is $C^{2,\alpha}$ up to the sonic arc, except point $P_1$; see Remark \ref{rem4.3x}.
To our knowledge, this is the first rigorous result describing the shape of the pseudo-sonic curve $\Gamma_{\rm sonic}$ and
the behavior of solutions near it without assuming \emph{a priori} that $\Gamma_{\rm sonic}$ is an arc.
Indeed, all previous works have focused on uniform incoming flows, in which case the sonic arc is necessarily circular; 
see, for example, 
\cite{Bae-Chen-Feldman,Bae-Chen-Feldman2020,ChenCliffeHuangLiuWang,ChenDengXiang,Chen-Feldman2,Chen-Feldman4,Chen-Feldman-Hu-Xiang,ChenSYiC,EL2008}. 
For non-uniform incoming states, however, the geometry of the sonic arc is far less clear, and it is conceivable that the circular structure may break down. 
This possibility creates significant analytical challenges. 
One of the main results of this paper is to show that the sonic arc remains circular in the important case when the solution is a small perturbation 
of a solution generated by a uniform incoming state.

The rest of the paper is organized as follows:
In \S 2, we present the pseudo-steady potential flow equation, introduce the pseudo-sonic curves,
and state the main theorems concerning the geometric structures of pseudo-sonic curves
and the behavior of streamlines near pseudo-sonic points.
In \S 3, we present the proof of Theorem 2.1 for the case in which all pseudo-sonic points are exceptional.
In \S 4, we establish Theorems 2.2--2.3 for the case in which the pseudo-velocity is tangential to $\Gamma_{\rm sonic}$.
In \S 5, we prove Theorem 2.4 concerning the structure of pseudo-streamlines near $\Gamma_{\rm sonic}$.
In \S 6--\S7, we prove Theorem 2.5 and Theorem 2.6, respectively.
In \S 8, we apply the preceding results to analyze the pseudo-sonic curve in the regular shock reflection-diffraction configuration
with non-uniform supersonic incoming flow.

\section{Self-Similar Potential Flow and Properties of the Pseudo-Sonic Curves}
In this section, we present the pseudo-steady potential flow equation, introduce the pseudo-sonic curves, and state
the main theorems
concerning the geometric structures of pseudo-sonic curves
and the behavior of streamlines near pseudo-sonic points.

\subsection{Self-similar potential flow equation}
Two-dimensional self-similar potential flow is governed by the conservation law of mass and the Bernoulli law
for the density $\rho$ and the pseudo-velocity potential $\varphi$ in the self-similar variables
$(\xi,\eta)\in \mathbb{R}^2$
({\it cf.} \cite{Chen-Feldman2}):
\begin{align}\label{self-similar}
  \di(\rho (|D\varphi|^2, \varphi)D \varphi) + 2\rho (|D\varphi|^2, \varphi)=0,
\end{align}
with
\begin{align}\label{rhoeqself}
  \rho (|D\varphi|^2, \varphi) =(K- \varphi - \frac12 |D\varphi|^2)^{\frac{1}{\gamma-1}},
\end{align}
where $K$ is the Bernoulli constant and
the divergence $\di$ and gradient $D$ are with respect to $(\xi,\eta)$.
In addition, $(u,v)=D\varphi+(\xi,\eta)$ is the velocity field and
$ c^2(\rho) = (\gamma-1)\rho^{\gamma-1}$ is the sonic speed with $\gamma>1$.
Then the sonic speed can be expressed as
  \begin{align}\label{A2}
 c^2 =  c^2 (|D\varphi|^2, \varphi)= (\gamma - 1)\big(K- \varphi - \frac12 |D\varphi|^2\big).
\end{align}

Moreover, \eqref{self-similar}--\eqref{rhoeqself} can be rewritten as in the following form:
\begin{align}
  &(c^2 -U^2) u_{\xi} -UV (u_{\eta} + v_{\xi})+(c^2 -V^2)v_{\eta} =0,\label{1}\\
  & u_{\eta} = v_{\xi},\label{1.1}
\end{align}
which is closed with the Bernoulli law:
\begin{align}\label{2}
  \frac{c^2}{\gamma-1}+ \frac{U^2 + V^2}{2} + \varphi =K,
\end{align}
where the pseudo-velocity $(U, V)$ satisfying
\begin{align}\label{UVDEFS}
  U\defs u-\xi = \varphi_{\xi}, \quad V\defs  v-\eta = \varphi_{\eta}.
\end{align}

Equation \eqref{self-similar} is a nonlinear equation of mixed elliptic-hyperbolic type.
It is elliptic (\emph{i.e.}, subsonic) if and only if
\begin{align}\label{subsonicdef}
  |D\varphi|< c(|D\varphi|^2, \varphi),
\end{align}
which is equivalent to
\begin{align*}
   |D\varphi|< c_* (\varphi) \defs \sqrt{\frac{2(\gamma-1)}{\gamma+1}(K - \varphi)}.
\end{align*}
It is hyperbolic (\emph{i.e.}, supersonic) if and only if
\begin{align}\label{supsonicdef}
  |D\varphi|> c(|D\varphi|^2, \varphi).
\end{align}

The points at which
\begin{align}\label{sonicDvarphi=c}
   |D\varphi|= c(|D\varphi|^2, \varphi)
\end{align}
are called pseudo-sonic points. 
Through this paper, we assume that the set of all pseudo-sonic points forms a curve, called the pseudo-sonic curve. 
The relative interior of the  pseudo-sonic curve is denoted $\Gamma_{\rm sonic}$:
\begin{equation}\label{sonicCurveRelOpen}
\Gamma_{\rm sonic}=\Gamma_{\rm sonic}^0.
\end{equation}
We sometimes write $\Gamma_{\rm sonic}^0$ to emphasize that only interior points are under consideration.

Denote by $\Omega^-$ and $\Omega^+$ the pseudo-supersonic region and the pseudo-subsonic region
of a solution $\varphi$, respectively.
Assume that there exist solutions $\varphi^-$ and $\varphi^+$
to equations \eqref{self-similar}--\eqref{rhoeqself} in $\Omega^-$ and $\Omega^+$
respectively (see Fig. \ref{sonic}) such that
\begin{enumerate}
\item In $ \Omega^- $, $ \varphi^-$ is a pseudo-supersonic solution and satisfies
equations \eqref{self-similar}--\eqref{rhoeqself} with
\begin{align}\label{supassc22}
 \varphi^-\in C^2(\overline{\Omega^-}),\quad \partial_{\xi} \varphi^- = U_-, \quad \partial_{\eta} \varphi^- = V_- ;
\end{align}

\item In $ \Omega^+ $, $ \varphi^+$ is a pseudo-subsonic solution and satisfies
equations \eqref{self-similar}--\eqref{rhoeqself} 
with
\begin{align}\label{subassc22}
  \varphi^+\in C^2(\Omega^+\cup \Gamma_{\rm sonic}^0),\quad \partial_{\xi} \varphi^+ = U_+, \quad \partial_{\eta} \varphi^+ = V_+.
\end{align}

\item On the pseudo-sonic curve $\Gamma_{\rm sonic}$, the following continuity conditions hold:
\begin{align}\label{conti2}
  \varphi^+ = \varphi^-, \quad D \varphi^+  =D \varphi^- \qquad \text{on $\Gamma_{\rm sonic}$}.
\end{align}
\end{enumerate}
\begin{figure}[!h]
	\centering
	\includegraphics[width=0.5\textwidth]{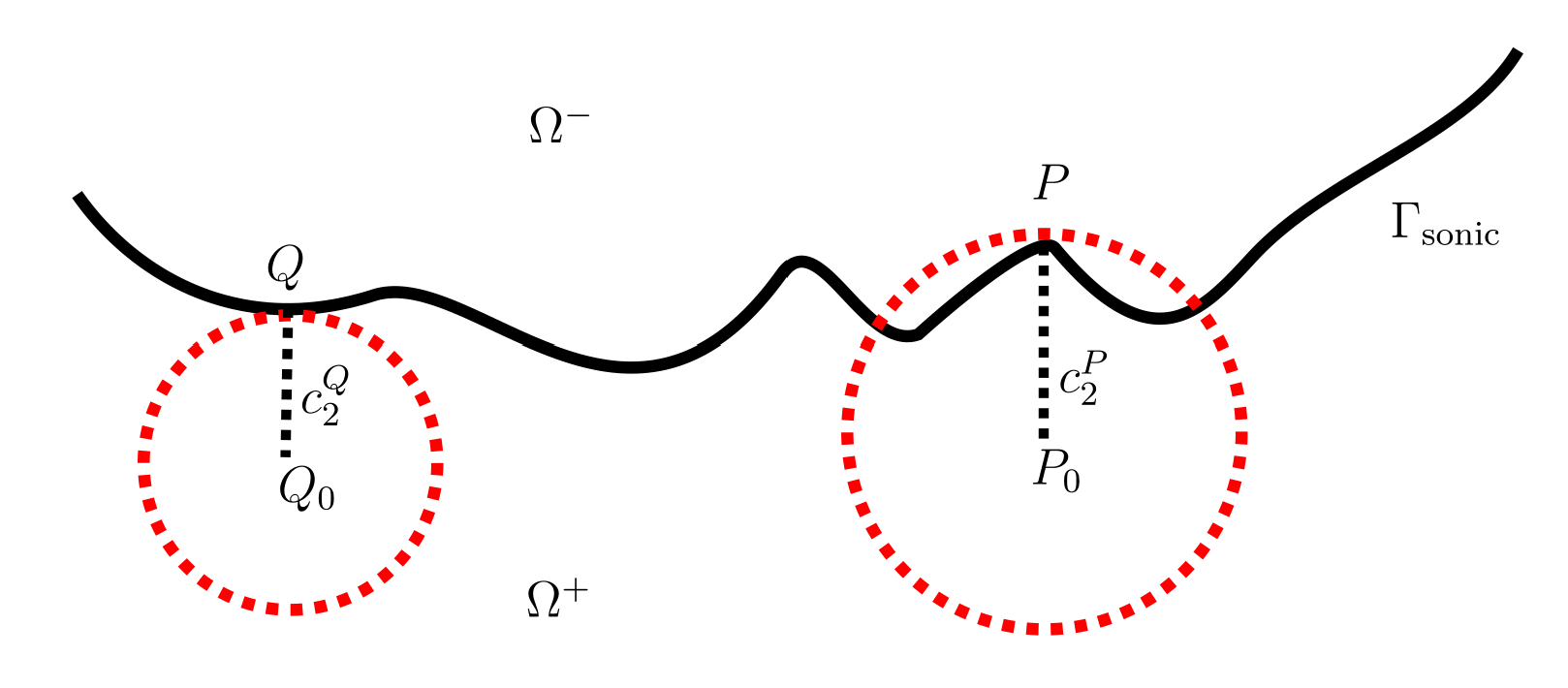}
	\caption{The pseudo-sonic curve in the $(\xi,\eta)$--coordinates. \label{sonic}}
\end{figure}

\subsection{Geometric structures of pseudo-sonic curves}
The first property concerns the case in which all pseudo-sonic points are exceptional
(see Definition \ref{defexcep} below).
Locally, \emph{i.e.}, in a neighborhood of each fixed pseudo-sonic point $Q\in\Gamma_{\rm sonic}$,
assume that one can choose suitable $(\xi,\eta)$--coordinates such that there exists a constant $r>0$
for which the pseudo-sonic curve $\Gamma_{\rm sonic}$ is represented by
\begin{align}\label{soniccurve2}
\Gamma_{\rm sonic}\cap B_r(Q)\defs \big\{ (\xi, \eta)\in \mathbb{R}^2 : \eta =f(\xi)\in C^{1,1}((l_1,l_2))\big\}\cap B_r(Q),
\end{align}
where $l_1<l_2$ are real numbers and $B_r(Q)$ denotes the ball centered at $Q$ with radius $r$.
We then introduce the notion of exceptional points for self-similar potential flows.

\begin{defn}\label{defexcep}
For $f$ given in \eqref{soniccurve2}, if $(U, V)(\xi,f(\xi))$ is the normal vector
to $\Gamma_{\rm sonic}$ at point $(\xi,f(\xi))$, then point $(\xi,f(\xi))$ is called an exceptional point.
\end{defn}

Let
\begin{align}\label{d2.16}
 \partial^{\perp} \defs V \partial_{\xi} -U \partial_{\eta},
 \quad \partial_{\mbtau} \defs \partial_{\xi}
 + f'(\xi) \partial_{\eta},\quad  \partial_{\mbnn} \defs f'(\xi) \partial_{\xi} - \partial_{\eta}.
\end{align}
Then we have the following theorem:

\begin{thm}\label{theorem1}
Assume that \eqref{supassc22}--\eqref{soniccurve2} hold and that the sonic curve is connected. Then
\begin{enumerate}
\item[\rm (i)] A pseudo-sonic point $Q$ is exceptional if and only if ${\partial}_{\mbtau} c(Q)=0$ or $c(Q)=0$.
\item[\rm (ii)] If all pseudo-sonic points are exceptional and the pseudo-sonic speed satisfies $c>0$ on the pseudo-sonic curve,
then $c=c_2$ on the pseudo-sonic curve for some constant $c_2$, and the pseudo-sonic curve lies on a circle of radius $c_2$.
\item[\rm (iii)]
There exists a local solution whose pseudo-sonic curve lies on a circle, and all its sonic points are non-exceptional.
\end{enumerate}
\end{thm}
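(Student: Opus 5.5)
The key identity is that on $\Gamma_{\rm sonic}$ we have $|D\varphi|^2=c^2$, with $c^2$ given by \eqref{A2}. Since $\varphi\in C^1$ across $\Gamma_{\rm sonic}$ by \eqref{conti2}, both $\varphi$ and $D\varphi$ restricted to the curve are well-defined Lipschitz functions of $\xi$. Combining $U^2+V^2=c^2$ with the Bernoulli law \eqref{2} gives, along $\Gamma_{\rm sonic}$,
\[
\frac{\gamma+1}{2(\gamma-1)}\,c^2=K-\varphi .
\]
Differentiating tangentially yields $\frac{\gamma+1}{\gamma-1}\,c\,\partial_{\mbtau}c=-\partial_{\mbtau}\varphi=-(U+f'V)$. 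Every quantity here is a tangential derivative of a function that is $C^1$ on the curve (one-sided values agree), so the computation is legitimate with only the $C^{1,1}$ regularity of $f$. Note that $(U,V)\cdot(1,f')=U+f'V$, and $(1,f')$ spans the tangent direction. Hence $(U,V)$ is normal to the curve at $Q$ if and only if $U+f'V=0$, that is, if and only if $c\,\partial_{\mbtau}c=0$. This proves (i). If $c(Q)=0$, then $(U,V)=0$, and we interpret this degenerate case as exceptional by convention.

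For (ii), assume every point is exceptional and $c>0$. Part (i) gives $\partial_{\mbtau}c\equiv0$ along each local graph. Since the curve is connected, $c\equiv c_2$ is constant on it. Next I would show that the curve lies on a circle by introducing the candidate center $O(\xi)=(\xi,f(\xi))+D\varphi(\xi,f(\xi))$, which is the velocity $(u,v)$ at the point. Because $|D\varphi|=c_2$, the point lies at distance $c_2$ from $O$, so it suffices to show that $O$ is constant. Differentiating along the curve gives
\[
\frac{dO}{d\xi}=(1,f')+\partial_{\mbtau}(D\varphi).
\]
Two facts now combine. Differentiating $|D\varphi|^2=c_2^2$ tangentially gives $D\varphi\cdot\partial_{\mbtau}D\varphi=0$. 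Because $D\varphi$ is normal, this means $\partial_{\mbtau}D\varphi$ is tangent. Separately, differentiating $U+f'V=0$ gives $\partial_{\mbtau}U+f'\partial_{\mbtau}V=-f''V$ almost everywhere. Writing $\partial_{\mbtau}D\varphi=\lambda(1,f')$, we get $\lambda(1+f'^2)=-f''V$.

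The curvature relation is what forces $\lambda=-1$. Since $D\varphi=\pm c_2(f',-1)/\sqrt{1+f'^2}$ is a normal vector of constant length, its tangential derivative is determined by the curvature $\kappa$ of the graph: $\partial_{\mbtau}D\varphi=\mp c_2\kappa\sqrt{1+f'^2}\,\mbtau_{\rm unit}$ (the standard Frenet relation). This alone does not force $\lambda=-1$. What is needed is $\kappa=\pm1/c_2$, and this must come from the equation \eqref{1}, since the geometric identities so far are consistent with any curvature.

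Here is where I expect the main obstacle. I would use that the sonic degeneracy makes \eqref{1} reduce on $\Gamma_{\rm sonic}$ to a relation involving only tangential derivatives. When $|D\varphi|=c$, the principal symbol $(c^2-U^2)\partial_\xi^2-2UV\partial_\xi\partial_\eta+(c^2-V^2)\partial_\eta^2$ has rank one with null direction $D\varphi$. It therefore equals $c^2\partial_{\mbtau\mbtau}$ in unit tangent and normal coordinates, since $D\varphi$ is normal. Rewriting \eqref{1} in the form $(c^2-U^2)\varphi_{\xi\xi}-2UV\varphi_{\xi\eta}+(c^2-V^2)\varphi_{\eta\eta}+|D\varphi|^2... $, the potential form reads $\operatorname{tr}\big((c^2I-D\varphi\otimes D\varphi)D^2\varphi\big)+2c^2-|D\varphi|^2=0$. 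On the curve this becomes $c_2^2\,\varphi_{\mbtau\mbtau}+c_2^2=0$, so $\varphi_{\mbtau\mbtau}=-1$. I would take one-sided limits from $\Omega^-$, where $\varphi^-\in C^2(\overline{\Omega^-})$, to justify evaluating second derivatives on the curve.

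Since $D\varphi\perp\mbtau$, the second derivative $\varphi_{\mbtau\mbtau}$ (the unit-tangent Hessian) equals $\mbtau\cdot\partial_{s}D\varphi$. Thus $\partial_sD\varphi=-\mbtau$ in arclength, which gives $dO/ds=\mbtau-\mbtau=0$. Hence $O$ is constant and the curve lies on the circle of radius $c_2$ about $O$. Justifying the almost-everywhere differentiation with $f\in C^{1,1}$ and the one-sided $C^2$ regularity is the delicate step.

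For (iii), I would construct an explicit local radial-type example. Take $\varphi$ depending on $r=|(\xi,\eta)|$ together with an angular term, for instance $\varphi=-\frac{r^2}{2}+a\theta+g(r)$, which produces a swirling pseudo-velocity with nonzero tangential component on the circle $r=r_0$. Substituting into \eqref{self-similar} gives an ODE for $g$, posed so that $|D\varphi|=c$ exactly on $r=r_0$. The constant angular component makes every sonic point non-exceptional. I would solve the ODE locally with a transonic sign change across $r_0$, checking hyperbolicity on one side and ellipticity on the other via the sign of $c^2-|D\varphi|^2$.
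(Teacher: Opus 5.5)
Your part (i) is the paper's argument. Your part (ii) is correct but takes a different, cleaner route.
\begin{itemize}
\item The paper turns \eqref{1} and the derivative of $U+f'V=0$ into the curvature ODE $f''=\pm(1+f'^2)^{3/2}/c_0$ and then integrates it.
\item You show directly that $(u,v)=(\xi,\eta)+D\varphi$ has zero arclength derivative along $\Gamma_{\rm sonic}$. Its tangential component is $1+\varphi_{\mbtau\mbtau}=0$ by the degenerate equation, and its normal component is $\frac{1}{2c_2}\partial_s|D\varphi|^2=0$.
\end{itemize}
Your route never differentiates $f'$, so the a.e.\ differentiation you worried about is not needed. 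It also makes the local-to-global patching trivial. As a bonus, it gives constancy of the velocity on the sonic arc, which the paper establishes separately in Remark~\ref{rem4.3x}.

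The genuine gap is in (iii): the ansatz $\varphi=-\frac{r^2}{2}+a\theta+g(r)$ cannot work for self-similar potential flow. Unlike the steady case, the Bernoulli law \eqref{rhoeqself}, \eqref{A2} contains $\varphi$ itself. Hence $c^2=(\gamma-1)\big(K+\frac{r^2}{2}-a\theta-g-\frac12|D\varphi|^2\big)$ depends on $\theta$, while $|D\varphi|^2=(g'-r)^2+a^2r^{-2}$ does not. Two things then fail.
\begin{itemize}
\item Equation \eqref{self-similar} does not reduce to an ODE for $g$. The term $r^{-2}\partial_\theta(\rho\varphi_\theta)=ar^{-2}\rho_\theta$, with $\rho_\theta=-\frac{a}{\gamma-1}\rho^{2-\gamma}\neq0$, survives. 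Imposing the equation on an open $\theta$-interval forces $a=0$.
\item The condition $|D\varphi|=c$ cannot hold along an arc of $r=r_0$ unless $a=0$.
\end{itemize}
More structurally, your own part (i) rules out any example in which $|D\varphi|$ is constant along the sonic circle. There $c=|D\varphi|$ would satisfy $\partial_{\mbtau}c=0$, which makes every point exceptional. So a non-exceptional sonic circle requires $c$ to vary genuinely along it, and no rotationally symmetric reduction can produce one. With $a=0$ the pseudo-velocity is radial and all points are exceptional.

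What is needed is a genuinely two-dimensional construction, as in Lemma~\ref{sonicarclem}. Prescribe analytic Cauchy data on an arc of $r=r_0$ with the following properties:
\begin{itemize}
\item $D\varphi$ is not radial and $|D\varphi|=c$;
\item $\varphi=K-\frac{\gamma+1}{2(\gamma-1)}c^2$ on the arc;
\item the compatibility $\partial_\theta\varphi=D\varphi\cdot\partial_\theta(\xi,\eta)$ holds.
\end{itemize}
The paper's choice $U=-\mu^2r_0\cos\theta$, $V=0$, $c=\mu^2r_0\cos\theta$ is an example. Then solve by Cauchy--Kovalevskaya. This works because, on the sonic curve, non-exceptional is exactly non-characteristic: the coefficient of $\varphi_{\nu\nu}$ is $c^2-(D\varphi\cdot\nu)^2=(D\varphi\cdot\mbtau)^2\neq0$. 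Finally, you must compute from the equation that $\partial_r(|D\varphi|^2-c^2)\neq0$ on the arc. That shows the arc is precisely the sonic set, with a supersonic side and a subsonic side.
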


\begin{rem}\label{rem2.1x}
By {\rm Theorem \ref{theorem1}}, if all pseudo-sonic points are exceptional and $c>0$ on $\Gamma_{\rm sonic}$,
then the pseudo-sonic speed satisfies $c\equiv c_2$ on $\Gamma_{\rm sonic}$ for some constant $c_2$,
and $\Gamma_{\rm sonic}$ lies on a circle.
As shown in {\rm Remark \ref{rem4.3x}} below, it then follows that,
on $\Gamma_{\rm sonic}$
$$
(U,V)=(u_2-\xi,v_2-\eta) \qquad\mbox{for some constants $(u_2, v_2)$}.
$$
Consequently, {\rm Theorem 3.1} in {\rm \cite[page 517]{Bae-Chen-Feldman}} can be applied.
\end{rem}

In the next two theorems, we consider the case in which the pseudo-velocity is tangential to the pseudo-sonic curve.

\begin{thm}\label{theorem4}
Assume that \eqref{supassc22}--\eqref{soniccurve2} hold and that
the pseudo-sonic curve is connected.  Then
\begin{enumerate}
\item[\rm (i)] If the pseudo-velocity field $(U,V)$ is tangential to the pseudo-sonic curve,
then $\partial_\mbnn c =0$ if and only if $\Gamma_{\rm sonic}$ is contained in a straight line.
\item[\rm (ii)] There exists a local solution whose pseudo-sonic curve contained in
a straight line such that
$(U,V)|_{\Gamma_{\rm sonic}}$ is not tangential to the pseudo-sonic line and $\partial_\mbnn c\neq 0$.
\end{enumerate}
\end{thm}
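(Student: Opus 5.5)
The plan for (i) is to work entirely on the pseudo-supersonic side, where $\varphi^-\in C^2(\overline{\Omega^-})$, and to read off identities on $\Gamma_{\rm sonic}$ from three facts: the sonic relation $U^2+V^2=c^2$, the equation \eqref{1}, and the Bernoulli law \eqref{2}. Write $\hat\tau$ and $\hat n$ for the unit tangent and unit normal of $\Gamma_{\rm sonic}$, parallel to $\partial_{\mbtau}$ and $\partial_{\mbnn}$ in \eqref{d2.16}. Let $\kappa$ be the signed curvature, $\kappa=\pm f''/(1+f'^2)^{3/2}$, which exists a.e. since $f\in C^{1,1}$.

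\textbf{Step 1: the equation on the curve.} Assume $(U,V)=D\varphi$ is tangential, so $D\varphi=\pm c\,\hat\tau$ on the curve. Then the coefficient matrix satisfies $c^2I-D\varphi\otimes D\varphi=c^2\,\hat n\otimes\hat n$. Using $u_\xi=\varphi_{\xi\xi}+1$ and $v_\eta=\varphi_{\eta\eta}+1$, equation \eqref{1} reduces on $\Gamma_{\rm sonic}$ to
\begin{equation*}
c^2\big(\varphi_{\hat n\hat n}+1\big)=0 .
\end{equation*}
Hence $\varphi_{\hat n\hat n}=-1$ wherever $c>0$. I will not actually need this identity below; it only records the structure.

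\textbf{Step 2: the mixed derivative.} Tangency gives $D\varphi\cdot\hat n=0$ along the curve. Differentiating this along the curve (valid a.e., since $\hat n$ is Lipschitz) and using $\partial_{\hat\tau}\hat n=\mp\kappa\hat\tau$ gives
\begin{equation*}
\varphi_{\hat\tau\hat n}=\pm c\,\kappa .
\end{equation*}

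\textbf{Step 3: the normal derivative of $c$.} Take the gradient of the Bernoulli law: $\frac{2}{\gamma-1}c\,Dc=-(D^2\varphi\,D\varphi+D\varphi)$. Take the $\hat n$-component; the term $D\varphi\cdot\hat n$ vanishes, and $(D^2\varphi\, D\varphi)\cdot\hat n=\pm c\,\varphi_{\hat\tau\hat n}$. This yields
\begin{equation*}
\frac{2}{\gamma-1}\,c\,\partial_{\hat n}c=-c^2\kappa \quad\text{a.e. on } \Gamma_{\rm sonic}.
\end{equation*}
So for $c>0$, $\partial_{\mbnn}c=0$ if and only if $\kappa=0$ a.e.

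\textbf{Step 4: conclusion of (i).} If $\kappa=0$ a.e., then $f''=0$ a.e. Since $f'$ is Lipschitz, $f'$ is constant, and by connectedness $\Gamma_{\rm sonic}$ lies on a line. Conversely, if the curve lies on a line, then $\kappa\equiv0$, so $\partial_{\mbnn}c=0$ a.e. Both sides of the identity are continuous, so this holds everywhere. Points with $c=0$ force $U=V=0$, where tangency is vacuous. There I would argue by continuity, or note that they can only be isolated or else force $c\equiv0$.

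\textbf{Step 5: the example in (ii).} I would construct the local solution by the Cauchy--Kovalevskaya theorem with the line $\{\xi=0\}$ as the intended sonic line. Prescribe analytic data $\varphi(0,\eta)$ and $\varphi_\xi(0,\eta)=U(\eta)$, and choose $K$ so that $U^2+\varphi_\eta^2=c^2$ holds along $\xi=0$, with $U\neq0$ (not tangential) and $V=\varphi_\eta\neq0$. The coefficient of $\varphi_{\xi\xi}$ in \eqref{1} is $c^2-U^2=V^2\neq0$, so the line is noncharacteristic and a local analytic solution exists. The equation then determines $\varphi_{\xi\xi}$ on the line, hence $\partial_\xi(c^2-|D\varphi|^2)$ there. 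I must check that for a suitable (generic, e.g. affine-in-$\eta$) choice of data this derivative is nonzero. Then the sonic set is exactly $\{\xi=0\}$ locally, with supersonic and subsonic regions on either side. Finally, $\partial_{\mbnn}c=\partial_\xi c\neq0$ follows from the Bernoulli gradient formula above, again for generic data.

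\textbf{Main obstacle.} I expect the hardest part to be verifying, in this last step, that the nondegeneracy $\partial_\xi(c^2-|D\varphi|^2)\neq0$ and the condition $\partial_\xi c\neq0$ can be achieved simultaneously. I would first try constant $U,V$ on the line with $\varphi(0,\eta)$ linear in $\eta$, and compute explicitly.
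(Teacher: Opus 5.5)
\textbf{Part (i).} Your argument is correct. It follows a slightly more direct route than the paper's. Taking the normal component of the gradient of the Bernoulli law and differentiating $D\varphi\cdot\hat n=0$ along the curve gives $\partial_{\hat n}c=-\frac{\gamma-1}{2}\,c\,\kappa$. This is the paper's identity $\partial^{\perp}c=\frac{\gamma-1}{2}c^2(\sin\sigma)'$, obtained by specializing \eqref{perpcsonic} under tangency in Lemmas~\ref{tangen000} and \ref{con}, written in geometric form.

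The paper first proves the general formula \eqref{perpcsonic}, valid at every non-exceptional sonic point, and this uses equation \eqref{1} through $V\sigma_\xi-U\sigma_\eta=1$. Your computation shows that \eqref{1} plays no role in the tangential case. Your treatment of points with $c=0$ is loose, but the paper also tacitly assumes $c>0$ there.

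\textbf{Part (ii): the gap.} The problem is in how you make $\{\xi=0\}$ sonic. A single constant $K$ cannot arrange $|D\varphi|=c$ along the whole line. Combined with the Bernoulli law, sonicity on the line is the pointwise compatibility condition
\begin{equation*}
\frac{\gamma+1}{2(\gamma-1)}\big(U(\eta)^2+\varphi_\eta(0,\eta)^2\big)+\varphi(0,\eta)=K\qquad\text{for all }\eta,
\end{equation*}
which constrains the Cauchy data themselves (cf.\ \eqref{sonicc}, and the check \eqref{com:Ber} in Lemma~\ref{sonicarclem}).

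Differentiating this in $\eta$ gives $c\,c_\eta=-\frac{\gamma-1}{\gamma+1}V$, which is Theorem~\ref{theorem1}(i). You need $V\neq0$ for non-characteristicity, and $V\ne 0$ also makes the points non-exceptional. Hence $c$ must vary along the line.

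So your intended first trial, constant $U,V$ with $\varphi(0,\eta)$ linear in $\eta$, cannot work. Then $U^2+V^2$ is constant while $c^2$ is affine and non-constant in $\eta$, so the line is sonic at no more than one point, whatever $K$ is. This is why the data in Lemma~\ref{counterexf''=0} keep the direction $(U,V)/c$ fixed but let $c$ vary linearly along the line.

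\textbf{Repair.} Solve the constraint for $U$. For instance, take $\varphi(0,\eta)=V_0\eta$ with $V_0\neq0$, and set $U(\eta)=\big(\frac{2(\gamma-1)}{\gamma+1}(K-V_0\eta)-V_0^2\big)^{1/2}>0$ near $\eta=0$. Then $UU_\eta=-\frac{\gamma-1}{\gamma+1}V_0$, and the following hold:
\begin{itemize}
\item Equation \eqref{1} on the line gives $\varphi_{\xi\xi}+1=-\frac{2(\gamma-1)}{\gamma+1}-\frac{U^2}{V_0^2}$.
\item The Bernoulli law then gives $\frac{2}{\gamma-1}c\,c_\xi=\frac{2(\gamma-1)}{\gamma+1}U+\frac{U^3}{V_0^2}+\frac{\gamma-1}{\gamma+1}\frac{V_0^2}{U}>0$.
\item The identity $\partial_\xi\big(|D\varphi|^2-c^2\big)=-\frac{2(\gamma+1)}{\gamma-1}c\,c_\xi-2U$ holds everywhere.
\end{itemize}
So for $U>0$, the transonic nondegeneracy follows from $c_\xi>0$. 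The obstacle you anticipated disappears once the data are compatible. The line is noncharacteristic because $c^2-U^2=V_0^2\neq0$, and the velocity is not tangential because $U\neq0$.
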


\begin{thm}\label{theorem3}
Assume that \eqref{supassc22}--\eqref{soniccurve2} hold and that the sonic curve is connected.
If the pseudo-velocity field $(U,V)$ is tangential to the pseudo-sonic curve
and $\partial_\mbnn c \neq 0$, then the pseudo-sonic curve is strictly convex
when viewed from the pseudo-supersonic domain $\Omega^-$.
\end{thm}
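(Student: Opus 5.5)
The plan is to reduce everything to a pointwise identity on $\Gamma_{\rm sonic}$ linking $\partial_{\mbnn} c$ to the curvature of $\Gamma_{\rm sonic}$, and then to use the sign of $|D\varphi|^2-c^2$ on the two sides of the curve to fix the sign of that curvature.

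\emph{Step 1 (the equation on the sonic curve).} Substituting $u=U+\xi$, $v=V+\eta$ into \eqref{1}--\eqref{1.1} gives the nondivergence form
$$
c^2\Delta\varphi-D\varphi^{T}D^2\varphi\,D\varphi+2c^2-|D\varphi|^2=0 .
$$
Let $\hat\tau$ and $\hat n$ be the unit tangent and normal along $\Gamma_{\rm sonic}$, with $\hat n$ pointing into $\Omega^-$, and let $s$ be arclength. The tangency assumption together with $|D\varphi|=c$ gives $D\varphi=\pm c\,\hat\tau$ on $\Gamma_{\rm sonic}$. Evaluating the equation on $\Gamma_{\rm sonic}$ with the one-sided second derivatives of $\varphi^-\in C^2(\overline{\Omega^-})$, the $\varphi_{\hat\tau\hat\tau}$ terms cancel and we obtain $c^2(\varphi_{\hat n\hat n}+1)=0$. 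This identity is not essential, but it is a useful consistency check.

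\emph{Step 2 (curvature identity).} Since $D\varphi\cdot\hat n\equiv 0$ on the curve, we differentiate in $s$. With the Frenet convention $\partial_s\hat n=-\kappa\hat\tau$, this gives $\varphi_{\hat\tau\hat n}=\pm\kappa c$. A priori $\kappa$ is only $L^\infty$, since $f\in C^{1,1}$. We therefore first do the computation in the weak/a.e.\ sense, and then note that $\kappa=\pm\varphi_{\hat\tau\hat n}/c$ with $\varphi_{\hat\tau\hat n}$ continuous. Hence $\kappa$ is continuous and $f\in C^2$ wherever $c>0$.

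Next, the Bernoulli law \eqref{A2} gives
$$
Dc^2=-(\gamma-1)\big(D^2\varphi\,D\varphi+D\varphi\big).
$$
Taking the $\hat n$ component and using $D\varphi\perp\hat n$, we get
$$
\partial_{\hat n}(c^2)=-(\gamma-1)\,c\,\varphi_{\hat\tau\hat n}=\mp(\gamma-1)\kappa c^2 .
$$
Thus $\partial_{\mbnn}c\neq0$ is equivalent to $\kappa\neq0$, provided $c>0$. The degenerate case $c=0$ must be excluded separately. If $c(Q)=0$, then $D\varphi(Q)=0$, so $Dc^2(Q)=0$. I would argue that this is incompatible with $\partial_{\mbnn}c$ being a well-defined nonzero quantity: either $c$ is differentiable there, or one interprets the hypothesis through $c^2$. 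Alternatively, $c>0$ is the natural standing assumption, as in Theorem \ref{theorem1}(ii).

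\emph{Step 3 (sign).} Set $g:=|D\varphi|^2-c^2$, which is $C^1$ up to $\Gamma_{\rm sonic}$ from the $\Omega^-$ side. Then $g>0$ in $\Omega^-$ by \eqref{supsonicdef}, and $g=0$ on $\Gamma_{\rm sonic}$. A direct computation gives
$$
Dg=(\gamma+1)D^2\varphi\,D\varphi+(\gamma-1)D\varphi,
$$
and hence $\partial_{\hat n}g=(\gamma+1)c\,\varphi_{\hat\tau\hat n}=\pm(\gamma+1)\kappa c^2$. Since $g$ vanishes on the curve and is positive on the $\hat n$-side, $\partial_{\hat n}g\ge0$. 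Step 2 then forces $\partial_{\hat n}g>0$, so $\kappa$ has a fixed sign at every point, the sign tied to the orientation $\hat n\to\Omega^-$. The same conclusion also follows from continuity of $\kappa$ and connectedness of $\Gamma_{\rm sonic}$.

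Unwinding the convention, the fixed sign of $\kappa$ means that $\Gamma_{\rm sonic}$ bends with its convex side facing $\Omega^-$. Together with $\kappa\neq0$ everywhere, this gives strict convexity viewed from $\Omega^-$.

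\emph{Main obstacle.} I expect the main obstacle to be the bookkeeping of signs and orientations in Steps 2--3. The key check is that the inequality $\partial_{\hat n}g\ge0$ combines with the $\pm$ from $D\varphi=\pm c\hat\tau$ in a way independent of that $\pm$. This should hold because both $\varphi_{\hat\tau\hat n}$ and $D\varphi$ flip together. A secondary technical point is justifying the differentiation along a merely $C^{1,1}$ curve before the $C^2$ regularity of $f$ is established.
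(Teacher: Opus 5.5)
Your proposal is correct and is essentially the paper's argument (Lemmas~\ref{lemperpc} and~\ref{con}). The Bernoulli law ties the normal derivative of $c$ to the curvature of $\Gamma_{\rm sonic}$ (the paper's $\partial^{\perp}c=\frac{\gamma-1}{2}\frac{c^2f''\cos\sigma}{1+(f')^2}$), and the sign comes from $q^2-c^2\ge0$ in $\Omega^-$ as in \eqref{towardtosup}--\eqref{perpc<00}; your coordinate-free version also shows that the equation itself (which enters Lemma~\ref{lemperpc} through $V\sigma_\xi-U\sigma_\eta=1$) is not needed in the tangential case, treats Cases I and II at once, and justifies $f\in C^2$. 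Fix the bookkeeping by writing $\partial_{\hat n}(c^2)=-(\gamma-1)\,\hat n\cdot D^2\varphi\,D\varphi=-(\gamma-1)\kappa c^2$ and $\partial_{\hat n}g=(\gamma+1)\kappa c^2$ with $\kappa=\hat n\cdot\partial_s\hat\tau$; no $\pm$ survives, as you predicted, so $\kappa>0$ means the curve turns toward $\Omega^-$, i.e.\ $\Omega^-\cap B_r(Q)$ is convex.
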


Next, assume that, in a suitable local coordinate system,
the pseudo-streamlines are represented by
$$
\eta =F(\xi)\in C^2((b_1, b_2)),
$$
where $b_1<b_2$ are real numbers.
Then we have the following result concerning the geometric structure of pseudo-streamlines.

\begin{thm}\label{streamtheorem}
Assume that \eqref{supassc22}--\eqref{soniccurve2} hold.
Let $Q$ be a pseudo-sonic point with $U(Q)\neq0$.  Then
\begin{enumerate}
\item[\rm (i)] If $\partial^{\perp} c(Q) =0$, then $F''(Q)=0$.
\item[\rm (ii)] If $\partial^{\perp} c(Q)\neq 0$ and $F$ can be defined (or extended) on the entire real line
so that $\Omega^- \subset \{\eta > F(\xi) \}$, then $F''(Q)>0$.
\end{enumerate}
\end{thm}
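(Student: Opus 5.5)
The plan is a direct computation of $F''$ along the streamline, expressed through $\partial^{\perp}c$, followed by a sign argument that uses the location of $\Omega^-$. Near $Q$ the pseudo-streamline satisfies $F'(\xi)=V/U$ (recall $U(Q)\neq0$). Differentiating along the streamline, i.e.\ applying $\frac{1}{U}(U\partial_\xi+V\partial_\eta)$, gives
\begin{equation*}
F''=\frac{U(UV_\xi+VV_\eta)-V(UU_\xi+VU_\eta)}{U^3}
=\frac{U^2 b+UV d-UV a-V^2 b}{U^3},
\end{equation*}
where $a=U_\xi$, $b=U_\eta=V_\xi$ and $d=V_\eta$; here $U_\eta=V_\xi$ follows from \eqref{1.1} and \eqref{UVDEFS}. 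These second derivatives are well defined at $Q$ from either side, since $\varphi^\pm$ are $C^2$ up to $\Gamma_{\rm sonic}$ and their gradients agree by \eqref{conti2}. I will work with $\varphi^-$.

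Next, differentiate the Bernoulli law \eqref{2}, using $\varphi_\xi=U$ and $\varphi_\eta=V$:
\begin{equation*}
\tfrac{2}{\gamma-1}c\,c_\xi=-(Ua+Vb+U),\qquad \tfrac{2}{\gamma-1}c\,c_\eta=-(Ub+Vd+V).
\end{equation*}
Forming $V c_\xi-Uc_\eta$, the terms $UV$ cancel. This leaves exactly the numerator above, so
\begin{equation*}
F''(Q)=\frac{2\,c(Q)\,\partial^{\perp}c(Q)}{(\gamma-1)\,U(Q)^3}.
\end{equation*}
Part (i) follows immediately. Notably, the equation \eqref{1} is not even needed for this step.

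For (ii), I study $g:=|D\varphi|^2-c^2$, which is positive in $\Omega^-$, negative in $\Omega^+$, and zero on the sonic curve. By \eqref{2}, $g=2(K-\varphi)-\frac{\gamma+1}{\gamma-1}c^2$. Since $\partial^{\perp}\varphi=VU-UV=0$, we get
\begin{equation*}
\partial^{\perp}g=-\tfrac{2(\gamma+1)}{\gamma-1}\,c\,\partial^{\perp}c.
\end{equation*}
The hypothesis $\Omega^-\subset\{\eta>F(\xi)\}$ means that no point of the streamline lies in $\Omega^-$. Hence $g\le0$ on the streamline, and $g(Q)=0$, so the tangential derivative $(U\partial_\xi+V\partial_\eta)g$ vanishes at $Q$. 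Therefore $Dg(Q)=\lambda(-V,U)$ for some $\lambda$, since $(-V,U)$ is normal to the streamline. For $U>0$ this normal points upward, into $\{\eta>F\}$, where $g$ takes positive values arbitrarily close to $Q$; this forces $\lambda\ge0$. Then $\partial^{\perp}g(Q)=Dg\cdot(V,-U)=-\lambda(U^2+V^2)\le0$, so $c\,\partial^{\perp}c(Q)\ge0$. For $U<0$ both the orientation of $(-V,U)$ and the sign of $U^3$ flip. In either case the sign of $c\,\partial^{\perp}c/U^3$ is nonnegative, and with $\partial^{\perp}c(Q)\neq0$ the formula gives $F''(Q)\ge0$.

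The main obstacle is upgrading this to strict positivity, together with the degenerate case $c(Q)=0$. When $c(Q)=0$, the formula gives $F''(Q)=0$ regardless of $\partial^{\perp}c$, and $c=0$ also forces $D\varphi(Q)=0$, contradicting $U(Q)\neq0$; so $c(Q)>0$. Strictness then reduces to excluding $\lambda=0$, which is equivalent to $\partial^{\perp}c(Q)=0$ by the identity for $\partial^{\perp}g$. This is excluded by hypothesis. Hence $c\,\partial^{\perp}c/U^3>0$ strictly, and $F''(Q)>0$. I expect the delicate point to be justifying $\lambda\ge0$ rigorously, namely that points of $\Omega^-$ accumulate at $Q$ from the side $\{\eta>F\}$. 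I would deduce this from $Q\in\partial\Omega^-$ and the inclusion hypothesis.
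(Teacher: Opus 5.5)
Your proposal is correct and follows essentially the paper's route. Your identity $U^3F''=\frac{2c}{\gamma-1}\partial^{\perp}c$ is exactly \eqref{F''F'=str1}, which the paper obtains through the angle $\sigma$ and \eqref{perpc==} rather than by differentiating $V/U$ directly. The sign in (ii) comes from the same Bernoulli identity for $\partial^{\perp}(q^2-c^2)$ as in \eqref{towardtosup}--\eqref{perpc<00}; you read it off directly from the inclusion $\Omega^-\subset\{\eta>F\}$ instead of through the Case I/II split and the figure.

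To close the step you flagged, use the fact that $\Omega^-$ is locally one side of the $C^{1,1}$ graph \eqref{soniccurve2}. Mere accumulation of points of $\Omega^-$ at $Q$ is not enough; you need this cone condition. With it, the inclusion forces $(U,V)(Q)$ to be tangent to $\Gamma_{\rm sonic}$, with $\Omega^-$ on the upper side of the streamline. Then argue with the one-sided function $g^-$ built from $\varphi^-$ rather than with a two-sided $Dg(Q)$, which need not exist. Since $g^-$ vanishes on $\Gamma_{\rm sonic}$ and is positive in $\Omega^-$, $Dg^-(Q)$ is a nonnegative multiple of the upward normal. Tangency also justifies computing $F''(Q)$ with $\varphi^-$, even though the streamline lies on the $\Omega^+$ side.
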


The final two theorems show that a pseudo-sonic point must be exceptional,
whenever the solution satisfies certain natural estimates  near $\Gamma_{\rm sonic}$.
For any fixed pseudo-sonic point $Q\in \Gamma_{\rm sonic}$, set $Q \defs (\xi_*, \eta_*)$ with $\eta_* = f(\xi_*)$.
Let $\varphi_2^Q$ be the pseudo-velocity potential with a constant velocity $(u_2^Q, v_2^Q)=D\varphi(Q)+(\xi_*, \eta_*)$ 
and satisfying $\varphi_2^Q(Q)=\varphi(Q)$
so that
\begin{equation}\label{phi-2-Q}
\varphi_2^Q(\xi,\eta) = -\frac12 \big((\xi^2 + \eta^2) - (\xi_*^2 + \eta_*^2)\big) + u_2^Q(\xi - \xi_*)+ v_2^Q (\eta-\eta_*) + \varphi(Q).
\end{equation}
Furthermore, let $(U,V)(Q) \defs (c\cos \sigma_*, c\sin\sigma_*)$
and define
\begin{align}\label{psiQQIN}
\psi^Q \defs \varphi - \varphi_2^Q.
\end{align}

We now state the following two theorems:

\begin{thm}\label{theoremnew}
Assume that \eqref{supassc22}--\eqref{soniccurve2} hold.
Let ${\psi}^Q$ be defined as in \eqref{psiQQIN}.
Let
$$
\mbnn_Q= \frac{(f'(\xi_*),-1)}{\sqrt{1+(f'(\xi_*))^2}}, \qquad\,\,
\mbtau_Q = \frac{(1,f'(\xi_*))}{\sqrt{1+(f'(\xi_*))^2}}
$$
denote the unit normal and unit tangential vectors to the pseudo-sonic curve at $Q$, respectively.
Suppose that ${\partial}^{\perp}c(Q)=0$.
If there exist a constant $a_1\in(0, \frac{1}{\gamma+1})$
and a sufficiently small constant $\varepsilon>0$ such that
either
\begin{align}\label{npsi<}
\psi^Q(Q + t\mbnn_Q) < a_1 (\frac{\mbnn_Q\cdot(U,V)(Q)}{c(Q)})^2t^2\qquad\,
\mbox{for either $t \in (0, \varepsilon)$ or $t \in (-\varepsilon,0)$},
\end{align}
or
\begin{align}\label{taupsi<}
  \psi^Q(Q + s\mbtau_Q) < a_1 (\frac{\mbtau_Q\cdot(U,V)(Q)}{c(Q)})^2 s^2\qquad\,
  \mbox{for either $s \in (0, \varepsilon)$ or $s \in (-\varepsilon,0)$},
\end{align}
then the pseudo-sonic point $Q$ is exceptional.
\end{thm}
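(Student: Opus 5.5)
The plan is a proof by contradiction that computes the full one-sided Hessian of $\psi^Q$ at $Q$ and compares the resulting Taylor expansion with \eqref{npsi<} or \eqref{taupsi<}. Assume $Q$ is not exceptional. By construction $\psi^Q(Q)=0$ and $D\psi^Q(Q)=0$, and $D^2\varphi=D^2\psi^Q-I$. Work in the orthonormal frame $\hat\nu=(U,V)(Q)/c(Q)=(\cos\sigma_*,\sin\sigma_*)$, $\hat e=(-\sin\sigma_*,\cos\sigma_*)$, and write $\psi_{\nu\nu},\psi_{\nu e},\psi_{ee}$ for the second derivatives of $\psi^Q$ at $Q$ in this frame. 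Each of $\varphi^\pm$ is $C^2$ up to $\Gamma_{\rm sonic}$ near $Q$ by \eqref{supassc22}--\eqref{subassc22}. I will therefore carry out the computation below separately with the one-sided limits of $D^2\varphi^+$ and $D^2\varphi^-$. The two Hessians may differ, but each will be shown to equal the same matrix, so the same argument works whichever side the ray $Q+t\mbnn_Q$ or $Q+s\mbtau_Q$ enters.

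The Hessian comes from three identities at $Q$.

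\emph{First}, from \eqref{A2}, for any direction $e$,
\[
\partial_e(c^2)=-(\gamma-1)\big((U,V)\cdot e+(U,V)\cdot D^2\varphi\, e\big)=-(\gamma-1)(U,V)\cdot D^2\psi^Q e ,
\]
since the $-I$ in $D^2\varphi$ cancels the first term. Taking $e=\hat e$, the hypothesis $\partial^\perp c(Q)=0$ gives $\psi_{\nu e}=0$.

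\emph{Second}, the sonic condition $|D\varphi|^2=c^2$ holds along $\Gamma_{\rm sonic}$, and it can be differentiated tangentially because $\varphi^\pm\in C^2$ up to the curve. Differentiating along $\mbtau_Q$ gives
\[
(\gamma+1)\,(U,V)\cdot D^2\psi^Q\mbtau_Q=2\,(U,V)\cdot\mbtau_Q .
\]
Write $\mbtau_Q=\alpha\hat\nu+\beta\hat e$. Non-exceptionality means exactly $\alpha\neq0$, so using $\psi_{\nu e}=0$ this yields $\psi_{\nu\nu}=\frac{2}{\gamma+1}$.

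\emph{Third}, evaluate \eqref{self-similar} in non-divergence form at $Q$, where $|D\varphi|=c$. In the $(\hat\nu,\hat e)$ frame the principal coefficient matrix $c^2I-(U,V)\otimes(U,V)$ becomes $\mathrm{diag}(0,c^2)$. After rewriting in terms of $\psi^Q$, the zeroth-order terms should cancel exactly at a sonic point, which is the reason for subtracting $\varphi_2^Q$. This leaves $c^2\psi_{ee}=0$, and hence $\psi_{ee}=0$ because $c(Q)>0$. I will verify this cancellation carefully, since the whole argument rests on it.

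With these values, for either sign of $t$ on the relevant side,
\[
\psi^Q(Q+t\mbnn_Q)=\tfrac12\psi_{\nu\nu}(\mbnn_Q\cdot\hat\nu)^2t^2+o(t^2)=\frac{1}{\gamma+1}\Big(\frac{\mbnn_Q\cdot(U,V)(Q)}{c(Q)}\Big)^2t^2+o(t^2),
\]
and the analogous expansion holds along $\mbtau_Q$. If the coefficient $(\mbnn_Q\cdot\hat\nu)^2$ (respectively $(\mbtau_Q\cdot\hat\nu)^2=\alpha^2>0$) is positive, then $a_1<\frac1{\gamma+1}$ makes this incompatible with \eqref{npsi<} (respectively \eqref{taupsi<}) for small $\varepsilon$. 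This contradiction shows $Q$ is exceptional.

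The main obstacle is the degenerate case $\mbnn_Q\cdot\hat\nu=0$ under \eqref{npsi<}, where the right-hand side vanishes and the $o(t^2)$ remainder cannot be compared with $0$. In that case $(U,V)(Q)$ is tangent to the curve, so $\hat e=\pm\mbnn_Q$. I would first try to exploit $\psi_{ee}=0$ together with the structure of the equation along the normal ray, possibly by a one-sided third-order argument or by also invoking \eqref{taupsi<}-type information, to exclude $\psi^Q<0$ there. If that fails, I would interpret the hypothesis as implicitly requiring the coefficient to be nonzero.
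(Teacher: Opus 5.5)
Your proposal is correct and is essentially the paper's proof via Lemma~\ref{lemntau}. Your three identities in the $(\hat\nu,\hat e)$ frame are the same ingredients the paper uses: the equation at a sonic point, the differentiated Bernoulli law combined with $\partial^{\perp}c(Q)=0$, and the tangential derivative of the sonic relation. The paper instead solves these as a $3\times 3$ linear system for the entries $(u_\xi,u_\eta,v_\eta)$ of $D^2\psi^Q$, and obtains the same one-sided Hessian $\frac{2}{\gamma+1}\hat\nu\otimes\hat\nu$ from both sides. The zeroth-order cancellation you plan to check in fact holds identically, not just at sonic points, since $\psi^Q$ satisfies \eqref{NPSIQ}.

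The degenerate case $\mbnn_Q\cdot(U,V)(Q)=0$ that you flag is not treated by the paper either. With zero coefficient, its bound \eqref{PsiQnQ} would assert $\psi^Q(Q+t\mbnn_Q)\ge 0$, which the second-order Taylor expansion does not give. So restricting \eqref{npsi<} to a nonzero coefficient is precisely what this argument, yours or the paper's, actually proves.
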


\begin{rem}
For a small perturbation problem of the regular shock reflection, $(U,V)(Q)$ is nearly normal
to the pseudo-sonic curve at point $Q$. Consequently,
$\big(\frac{\mbnn_Q\cdot(U,V)(Q)}{c(Q)}\big)^2$ is close to $1$.
Therefore, condition \eqref{npsi<} is more suitable than \eqref{taupsi<}
for describing such perturbation problems.
An application in this direction is given in {\rm Theorem \ref{thmapp}} below.
\end{rem}

\begin{rem}
Based on the proof of {\rm Theorem \ref{theoremnew}}, and in particular {\rm Lemma \ref{lemntau}} below,
we in fact establish a stronger regularity result. Specifically, if $\psi$ is a piecewise $C^2$ solution $\psi$,
$Q$ is non-exceptional sonic point, and ${\partial}^{\perp}c(Q)=0$,
then $\psi$ is twice differentiable at $Q$ across the sonic curve.
Moreover, if every sonic point is non-exceptional and ${\partial}^{\perp}c=0$ on the entire sonic curve,
then $\psi$ is $C^2$ across the sonic curve.
This is in sharp contrast to the regular shock reflection solutions constructed in {\rm \cite{Chen-Feldman4}},
where one of the second-order derivatives exhibits a jump discontinuity across the sonic curve,
since every sonic point is exceptional.
\end{rem}

\begin{thm}\label{theoremcone}
Fix $\alpha\in(0,1)$.
Assume that \eqref{supassc22}--\eqref{conti2} hold.
Let $Q\in\Gamma_{\rm sonic}$ such that
$c(Q)>0$ and that there exist $r>0$ and a suitable rotated
coordinate system in which $\Gamma_{\rm sonic}$ is represented by
\eqref{soniccurve2} in $B_r(Q)$ for some $f\in C^{1,1}$.
Let $Q=(\xi_*,\eta_*)$ denote the coordinates of $Q$ in this coordinate
system with 
$\eta_*=f(\xi_*)$.
Write
\[
(U,V)(Q)=(c\cos\sigma_*,\,c\sin\sigma_*).
\]
Assume further that $\psi^Q$, defined by \eqref{psiQQIN}, satisfies
\begin{equation}\label{assumeAthmcone}
 \psi^Q
 >
 a_2\big((\xi-\xi_*)\cos\sigma_*+(\eta-\eta_*)\sin\sigma_*\big)^2
 -
 \epsilon\big((\eta-\eta_*)\cos\sigma_*-(\xi-\xi_*)\sin\sigma_*\big)^2
\end{equation}
in $B_r(Q)\cap\Omega^+$ for some constants $a_2>0$ and $\epsilon\ge0$.

Let $\theta_1\in(-\frac{\pi}{2},\frac{\pi}{2})$ be the angle between
the inner normal
\[
\mbnn_Q=
\frac{(f'(\xi_*),-1)}
     {\sqrt{1+(f'(\xi_*))^2}}
\]
to $\Gamma_{\rm sonic}$ at $Q$ and the pseudo-velocity direction
$\mathbf e_v$, where $\mathbf e_v$ and $\mathbf e_{v^\perp}$ are the
unit vectors in the directions of $(U,V)(Q)$ and $(-V,U)(Q)$,
respectively, so that $\cos\theta_1=\mbnn_Q\cdot \mathbf{e}_{v}$ and $\sin\theta_1=\mathbf{n}_Q\cdot \mathbf{e}_{v^{\perp}}$.

Then there exist positive constants $\epsilon_0$ and $\varepsilon_1$,
depending only on
$r$, $a_2$, $\alpha$,
$\|f\|_{C^{1,1}([l_1,l_2])}$, and
$\|\psi^Q\|_{C^{1,\alpha}(\overline{B_r(Q)\cap\Omega^+})}$,
such that, if
\[
0<\epsilon\le\epsilon_0
\qquad\text{and}\qquad
|\theta_1|\le\varepsilon_1,
\]
then the pseudo-sonic point $Q$ is exceptional.
\end{thm}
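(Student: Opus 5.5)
By Definition \ref{defexcep}, we must show that $(U,V)(Q)$ is parallel to the normal $\mbnn_Q$ of $\Gamma_{\rm sonic}$ at $Q$. Since $\theta_1\in(-\frac{\pi}{2},\frac{\pi}{2})$ is the angle between $\mbnn_Q$ and $\mathbf e_v$, this is equivalent to $\sin\theta_1=0$. Equivalently, by Theorem \ref{theorem1}(i) and $c(Q)>0$, it is equivalent to $\partial_{\mbtau}c(Q)=0$. I argue by contradiction: assume $0<|\theta_1|\le\varepsilon_1$, and use a barrier in $\Omega^+$ near $Q$ together with the lower bound \eqref{assumeAthmcone} to obtain a contradiction once $\epsilon_0$ and $\varepsilon_1$ are small.

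\emph{Step 1 (normal form).} I use rotated coordinates centred at $Q$: $x=(\xi-\xi_*)\cos\sigma_*+(\eta-\eta_*)\sin\sigma_*$ along $\mathbf e_v$, and $y$ along $\mathbf e_{v^\perp}$. By \eqref{phi-2-Q}, $\psi^Q(Q)=0$ and $D\psi^Q(Q)=0$. Substituting $\varphi=\varphi_2^Q+\psi^Q$ into \eqref{self-similar}--\eqref{A2} (as in Bae--Chen--Feldman), $\psi:=\psi^Q$ satisfies a quasilinear equation of the form
\[
\big(2c(Q)\,x-(\gamma+1)c(Q)\,\psi_x+E_1\big)\psi_{xx}+c(Q)^2\big(1+E_2\big)\psi_{yy}+E_3\,\psi_{xy}+L(\psi)=0 .
\]
The error terms satisfy $|E_i|\le C(|x|^2+|y|^2+|D\psi|^2+|\psi|)$, with $C$ controlled by $\|\psi\|_{C^{1,\alpha}}$. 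The quantity $2x-(\gamma+1)\psi_x+E$ equals a positive multiple of $c^2-|D\varphi|^2$. Hence it is positive in $\Omega^+$, vanishes on $\Gamma_{\rm sonic}$, and is negative in $\Omega^-$.

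\emph{Step 2 (geometry when $\theta_1\ne0$).} Since $\Gamma_{\rm sonic}$ is a $C^{1,1}$ graph tangent at $Q$ to the line $\{z:\ z\cdot\mbnn_Q=0\}$, the region $\Omega^+\cap B_r(Q)$ contains a sector around the direction $\mbnn_Q=\cos\theta_1\,\mathbf e_v+\sin\theta_1\,\mathbf e_{v^\perp}$. When $\sin\theta_1\neq0$, this sector contains a thin cone $\mathcal C$ of directions $d$ with $d\cdot\mbnn_Q>0$ and $d\cdot\mathbf e_v<0$, i.e.\ with $x<0$, adjacent to $\Gamma_{\rm sonic}$. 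Ellipticity in $\mathcal C$ forces $\psi_x<\frac{2x}{\gamma+1}+O(|z|^{1+\alpha})<0$. On the boundary piece $\Gamma_{\rm sonic}\cap\partial\mathcal C$, sonicity gives $\psi_x=\frac{2x}{\gamma+1}+O(|z|^{1+\alpha})$. Integrating $\psi_x$ along segments in the $x$-direction from $\Gamma_{\rm sonic}$ into $\mathcal C$ then bounds $\psi$ above by $\frac{1}{\gamma+1}x^2$ plus lower-order terms near the sonic boundary.

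\emph{Step 3 (barrier and contradiction).} In a truncated cone $\mathcal C_\delta=\mathcal C\cap B_\delta(Q)$ I take the comparison function
\[
\Psi=\mu\,x^2-\epsilon\,y^2-M\,(z\cdot\mbnn_Q)^{1+\alpha}\qquad(0<\mu<a_2).
\]
I check that it is a strict subsolution of the linearized operator from Step 1, with coefficients frozen at $\psi$. For this I use: $\psi_{yy}$ has coefficient $\approx c(Q)^2>0$; the $\psi_{xx}$ coefficient is positive but small near the apex; and $\epsilon\le\epsilon_0$ and $|\theta_1|\le\varepsilon_1$ are small.
\begin{itemize}
\item On $\partial\mathcal C_\delta\cap\Omega^+$, assumption \eqref{assumeAthmcone} gives $\psi\ge\Psi$ once $M$ absorbs the $C^{1,\alpha}$ errors.
\item On the side lying on $\Gamma_{\rm sonic}$, Step 2 supplies the matching inequality.
\end{itemize}
The comparison principle then yields $\psi\ge\Psi$ in $\mathcal C_\delta$. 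Combined with the upper bound $\psi_x<\frac{2x}{\gamma+1}$ in the region $x<0$, integration along $x$-segments exiting $\mathcal C$ gives $\psi\le -c\,|x|\,|y|\sin|\theta_1|+o(|z|^2)$. For the cone opened with width comparable to $|\theta_1|$, this is incompatible with $\psi\ge\mu x^2-\epsilon y^2$ when $\epsilon\le\epsilon_0$. Hence $\sin\theta_1=0$, and $Q$ is exceptional.

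The main obstacle is Step 3. The operator degenerates at $Q$ exactly on the boundary of $\mathcal C$, so the barrier must be a strict subsolution despite the vanishing $\psi_{xx}$ coefficient. It must also be compatible with the boundary data on $\Gamma_{\rm sonic}$, which is known only in $C^{1,1}$. I would first try to make $\Psi$ a subsolution using only the nondegenerate $\psi_{yy}$ term, exploiting that $\mathcal C$ has angular width $\sim|\theta_1|$. This is where smallness of $\varepsilon_1$ relative to $a_2$ and to the $C^{1,\alpha}$ norm enters.
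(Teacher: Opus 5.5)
Your Step~3 fails, and the contradiction you reach rests on an inequality that is false.

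\emph{The barrier is a supersolution, not a subsolution.} In your coordinates, $\psi=\psi^Q$ satisfies exactly $\tilde a_{11}\psi_{xx}+2\tilde a_{12}\psi_{xy}+\tilde a_{22}\psi_{yy}=0$. Here $(\tilde a_{ij})$ is the matrix of $A:=I-c^{-2}(U,V)\otimes(U,V)$, and there are no lower-order terms since $D^2\varphi_2^Q=-I$. At $Q$ one has $\tilde a_{11}=\tilde a_{12}=0$ and $\tilde a_{22}=1$. Hence
$\mathcal N\Psi=2\mu\tilde a_{11}-2\epsilon\tilde a_{22}-M\alpha(1+\alpha)(z\cdot\mathbf{n}_Q)^{\alpha-1}\,\mathbf{n}_Q\cdot A\mathbf{n}_Q$, which is negative near $Q$. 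The only positive term carries the degenerate coefficient $\tilde a_{11}$, while the nondegenerate directions enter with the wrong sign.

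\emph{The comparison would be vacuous anyway.} Since $\mu<a_2$, you already have $\Psi\le a_2x^2-\epsilon y^2<\psi$ directly from \eqref{assumeAthmcone}.

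\emph{The upper bound is false.} Everything therefore hinges on $\psi\le -c|x||y|\sin|\theta_1|+o(|z|^2)$ in $\mathcal C$. Integrating $\psi_x<\frac{2x}{\gamma+1}$ from $\Gamma_{\rm sonic}$ only bounds $\psi$ by its values on $\Gamma_{\rm sonic}$, for which you have no negative upper bound. The claim is in fact false. Combine the equation at $Q$, the tangential derivative of $\frac{\gamma+1}{2(\gamma-1)}c^2+\varphi=K$ along $\Gamma_{\rm sonic}$, and $q<c$ in $\Omega^+$. These give, for the one-sided Hessian at $Q$: $\psi_{yy}=0$, $\psi_{xy}=\tan\theta_1\,(\psi_{xx}-\frac{2}{\gamma+1})$, and $\psi_{xx}\le\frac{2}{\gamma+1}$. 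Since $xy\tan\theta_1<0$ in $\mathcal C$, the quadratic part of $\psi$ there is at least $\frac12\psi_{xx}(Q)\,x^2\ge 0$.

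The missing idea is that the contradiction must come from the first-order fact $D\psi^Q(Q)=0$, via a Hopf boundary-point argument. Non-exceptionality means exactly $\mathbf{n}_Q\cdot A(Q)\mathbf{n}_Q=\sin^2\theta_1>0$: at $Q$ the operator degenerates only along $\mathbf e_v$, not in the normal direction.

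The paper uses this as follows. It takes an interior tangent ball $B_R(Q_0)\subset\Omega^+$ at $Q$, available since $f\in C^{1,1}$, and the barrier \eqref{PsiQ==}. With $l=12/\tilde\eta_*^2$ and $|\tilde\eta_*|=R|\sin\theta_1|\neq0$, the $\tilde a_{22}\tilde\eta^2$ contribution makes the barrier a strict subsolution in $B_{\varepsilon_*}(Q)\cap\Omega^+$ for $\varepsilon_*<\frac12|\tilde\eta_*|$. The extra term $-b(\tilde\eta-\tilde\eta_*)^2$ absorbs the $-\epsilon$ part of \eqref{assumeAthmcone} on the boundary outside $B_R(Q_0)$. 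Comparison then gives $\partial_{\mathbf n}\psi^Q(Q)\ge 2lR\kappa e^{-lR^2}>0$, contradicting $D\psi^Q(Q)=0$.

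For $\epsilon=0$ this is just the classical Hopf lemma, using $\psi^Q>0$ on a compact inner sphere, and it works for every $\theta_1\neq0$.

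If you follow this route with $\epsilon>0$, be careful at one step. Because $\varepsilon_*<\frac R2|\sin\theta_1|$, the arc $\partial B_{\varepsilon_*}(Q)\cap B_R(Q_0)$ contains a point with $\tilde\xi=\tilde\xi_*$. There \eqref{assumeAthmcone} gives only $\psi^Q>-\epsilon\varepsilon_*^2$. The positive lower bound \eqref{psiQ>kappa} relies on $\cos\theta\ge\frac{\varepsilon_*}{4R}$ along that arc, which needs $|\theta_1|\le\arcsin\frac{\varepsilon_*}{2R}$. That is impossible when $\varepsilon_*<\frac R2|\sin\theta_1|$, so that estimate does not follow as written.
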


\begin{rem}
For the regular shock reflection with uniform state $(1)$, it follows from {\rm \cite{Bae-Chen-Feldman}} that, for any pseudo-sonic point $Q\in\Gamma_{\rm sonic}$,
$$
\psi^Q=\frac{1}{2(\gamma+1)}\big((\xi-\xi_*)\cos\sigma_* +(\eta-\eta_*)\sin\sigma_* \big)^2
+
o((\xi-\xi_*)^2 +(\eta-\eta_*)^2).
$$
Then assumption \eqref{assumeAthmcone} is satisfied for some 
constants $a_2=\frac{1}{2(\gamma+1)}$ and $\varepsilon=0$
for such solutions. This suggests that assumption \eqref{assumeAthmcone} should also be satisfied for some constants $a_2\in(0,\frac{1}{2(\gamma+1)})$ 
and sufficiently small $\varepsilon>0$,
provided that the solution is a small perturbation of the solution corresponding to a uniform incident state in $\Omega^+$.
\end{rem}

We now prove Theorems \ref{theorem1}--\ref{theoremcone} in the next five sections.
To simplify the presentation, we henceforth omit the prefix ``pseudo'' from
the terms sonic, supersonic, subsonic, and streamline,
unless otherwise specified.

\section{Proof of Theorem \ref{theorem1} for the Case that All Sonic Points Are Exceptional}
On the sonic curve, in general, two sonic points can have two different sonic circles;
see Fig. \ref{sonic}.
However, if all the sonic points are exceptional, then the following lemma holds:

\begin{lem}\label{arc}
Assume that \eqref{supassc22}--\eqref{soniccurve2} hold.
\begin{enumerate}
\item[(i)]
The sonic point $Q$ is exceptional if and only if either ${\partial}_{\mbtau} c(Q)=0$ or $c(Q)=0$.
\item[(ii)] If all the sonic points are exceptional, $c>0$, and the sonic curve is connected,
then $\Gamma_{\rm sonic}$ lies on a circle with its radius being a constant $c_2$, where $c=c_2$ on $\Gamma_{\rm sonic}$.
\end{enumerate}
\end{lem}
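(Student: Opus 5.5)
The plan is to differentiate the sonic identity $|D\varphi|^2=c^2$ along $\Gamma_{\rm sonic}$ and compare the result with the Bernoulli law \eqref{2} differentiated along the same curve. Since $\varphi$ and $D\varphi$ are continuous across $\Gamma_{\rm sonic}$ by \eqref{conti2}, all first derivatives appearing below are well defined on $\Gamma_{\rm sonic}$. Tangential derivatives of $\varphi$ and $D\varphi$ along $\eta=f(\xi)$ can be computed from either side, so $\varphi^-\in C^2(\overline{\Omega^-})$ lets us take them from the supersonic side.

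The key computation is as follows. Set $g(\xi):=\varphi(\xi,f(\xi))$ and $W(\xi):=(U,V)(\xi,f(\xi))$. Then $g'=U+f'V$, which is $W\cdot(1,f')$. By \eqref{A2}, on $\Gamma_{\rm sonic}$ we have
\[
c^2=(\gamma-1)\Big(K-\varphi-\tfrac12|W|^2\Big),\qquad |W|^2=c^2,
\]
so $\frac{\gamma+1}{2}c^2=(\gamma-1)(K-\varphi)$ on $\Gamma_{\rm sonic}$. Applying $\partial_{\mbtau}$ to this relation gives
\[
(\gamma+1)\,c\,\partial_{\mbtau}c=-(\gamma-1)\,\partial_{\mbtau}\varphi=-(\gamma-1)\,(U+f'V).
\]
Now $U+f'V=0$ means exactly that $(U,V)$ is orthogonal to the tangent $(1,f')$, i.e.\ that $(U,V)$ is normal to $\Gamma_{\rm sonic}$. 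Hence $Q$ is exceptional if and only if $c\,\partial_{\mbtau}c(Q)=0$, which is (i). The only point to check is that $c$ is differentiable along the curve. This follows from $c^2$ being a $C^1$ function of $(\varphi,D\varphi)$ restricted to a $C^{1,1}$ curve, so $\partial_{\mbtau}(c^2)$ is well defined. At points where $c=0$ we phrase everything in terms of $c^2$: exceptional is equivalent to $\partial_{\mbtau}(c^2)=0$, and this is implied by $c=0$ together with $|W|=c$. In that case $W=0$, which is trivially normal.

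For (ii), suppose all points are exceptional and $c>0$. Then (i) gives $\partial_{\mbtau}c=0$ along the connected curve, so $c\equiv c_2$ is constant. Next we show the curve lies on a circle. Consider the map $P=(\xi,f(\xi))\mapsto P+W(P)$, i.e.\ $(u,v)$ restricted to the curve. Because $W$ is normal to the curve and $|W|=c_2$, we have $W=\pm c_2\,\mbnn$, with a sign that is constant by continuity and $c_2>0$. Differentiating,
\[
\partial_{\mbtau}(P+W)=(1,f')+\partial_{\mbtau}W .
\]
Computing $\partial_{\mbtau}W$ is the main obstacle: we only know $f\in C^{1,1}$, so $\mbnn$ is Lipschitz and $\partial_{\mbtau}W=\pm c_2\,\partial_{\mbtau}\mbnn$ holds only a.e. I would instead exploit the potential structure directly. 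Let $h(\xi):=\varphi+\frac12|P|^2$ on the curve, so that $h'=(u,v)\cdot(1,f')$. Write $(u,v)=P\pm c_2\mbnn$; then $\varphi$ and $D\varphi$ being $C^1$ along the curve makes $(u,v)$ continuous along it.

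The cleaner route uses $\psi$-type quantities: $\varphi-\varphi_2^Q$ with $|D\varphi|=c_2$ and the constancy of $\varphi$ along the curve. Indeed $\partial_{\mbtau}\varphi=U+f'V=0$, so $\varphi\equiv$ const on $\Gamma_{\rm sonic}$, matching $c$ constant via Bernoulli. Then $D\varphi$, the gradient of the $C^2(\overline{\Omega^-})$ function $\varphi^-$, is a $C^1$ field normal to a level curve, with constant length $c_2$. Taking its tangential derivative, $\partial_{\mbtau}(D\varphi)=D^2\varphi\,\mbtau$. Using $u_\eta=v_\xi$ together with the equation \eqref{1} at sonic points, I would show that $D^2\varphi\,\mbtau=-\mbtau$ along the curve, i.e.\ $\partial_{\mbtau}(u,v)=0$. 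This would yield $(u,v)\equiv(u_2,v_2)$ on the curve, so $|P-(u_2,v_2)|=|W|=c_2$, placing the curve on a circle of radius $c_2$.

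To get $D^2\varphi\,\mbtau=-\mbtau$, I would combine two facts. First, $\partial_{\mbtau}|D\varphi|^2=0$ gives $D\varphi\cdot D^2\varphi\,\mbtau=0$. Second, \eqref{1} in the frame $(\mbnn,\mbtau)$ with $(U,V)=\pm c_2\mbnn$ reduces to $c_2^2\,\partial_{\mbtau\mbtau}\varphi=-c_2^2$, i.e.\ the normal–normal coefficient vanishes and the tangential coefficient is $c^2$ (from the $-(c^2)$ term rewriting $u_\xi$ as $\varphi_{\xi\xi}+1$). Together these give $D^2\varphi\,\mbtau+\mbtau=0$, completing (ii).
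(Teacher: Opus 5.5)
\paragraph{Comparison with the paper's proof.}
Your part (i) is the paper's argument. Differentiating the sonic Bernoulli relation $\frac{\gamma+1}{2(\gamma-1)}c^2+\varphi=K$ along $\Gamma_{\rm sonic}$ gives $c\,\partial_{\mbtau}c=-\frac{\gamma-1}{\gamma+1}(U+f'V)$.

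Your part (ii), namely the argument in your last two paragraphs, is correct but follows a different route. Both proofs use equation \eqref{1} on $\Gamma_{\rm sonic}$ with $(U,V)=\pm c_2\mbnn$. This gives $\mbtau\cdot\partial_{\mbtau}(u,v)=0$, which is the paper's \eqref{equvxi} and your identity $\partial_{\mbtau\mbtau}\varphi=-1$.

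The two proofs then diverge:
\begin{itemize}
\item The paper differentiates the normality condition $U+f'V=0$ along the curve. This brings in $f''$, which exists only a.e.\ for $f\in C^{1,1}$. It yields the curvature equation \eqref{fcircle}, which is integrated explicitly, and the local circles are then glued across overlapping charts.
\item You differentiate the constant-length condition $|D\varphi|^2=c_2^2$. This kills the normal component $\mbnn\cdot D^2\varphi\,\mbtau$, so together with the equation you get $(D^2\varphi+I)\mbtau=0$, i.e.\ $\partial_{\mbtau}(u,v)=0$. Hence $(u,v)\equiv(u_2,v_2)$ on the connected curve, and $P=(u_2,v_2)-D\varphi(P)$ with $|D\varphi(P)|=c_2$.
\end{itemize}

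Your route has three advantages. It needs only a $C^1$ curve and no ODE integration. Globalization is immediate, since $(u,v)$ is locally constant on a connected set. The center appears directly as the constant velocity on $\Gamma_{\rm sonic}$, with no $\pm$ sign bookkeeping; this constant-velocity fact is exactly what Remark \ref{rem4.3x} later needs. The paper's route, in exchange, gives the explicit parametrization \eqref{f=}.

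Two minor points:
\begin{itemize}
\item The identity $\partial_{\mbtau\mbtau}\varphi=-1$ holds for the unit tangent. With the unnormalized $\partial_{\mbtau}=\partial_\xi+f'\partial_\eta$ of \eqref{d2.16}, the right-hand side is $-(1+(f')^2)$. This does not affect $D^2\varphi\,\mbtau=-\mbtau$, by linearity.
\item The exploratory detours through $P+W$, $h(\xi)$, and $\varphi-\varphi_2^Q$ are never used and can be dropped.
\end{itemize}
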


\begin{proof} The proof is divided into three steps.

\smallskip
{\bf 1}. In the first two steps, we consider the argument only locally.
That is, as noted before \eqref{soniccurve2}, we choose
appropriate $(\xi,\eta)$--coordinates, so that  \eqref{soniccurve2} holds, and work within $B_r(Q)$.
Notice that $(U, V)(\xi,f(\xi))$ is a normal vector to the sonic curve $\Gamma_{\rm sonic}$ if and only if
\begin{align}\label{3}
  U+f'(\xi) V=0\qquad \text{on $\Gamma_{\rm sonic}$}.
\end{align}

Substituting \eqref{sonicDvarphi=c} into \eqref{2} yields that, on $\Gamma_{\rm sonic}$,
\begin{align}\label{sonicc}
  \frac{\gamma+1}{2(\gamma-1)}c^2 + \varphi =K.
\end{align}
Taking the tangential derivative ${\partial}_{\mbtau}$ on equation \eqref{sonicc} along $\Gamma_{\rm sonic}$,
we have
 \begin{align*}
   c{\partial}_{\mbtau}c  + \frac{\gamma-1}{\gamma+1} (U + f'\, V) =0\qquad \text{on $\Gamma_{\rm sonic}$}.
 \end{align*}
Thus, according to Definition \ref{defexcep}, the sonic point $Q$ is exceptional if and only if either ${\partial}_{\mbtau} c(Q)=0$ or $c(Q)=0$.

\smallskip
{\bf 2}.
Then it follows from the first step and the assumption that $c>0$ 
on the sonic curve
that $c$ must be a positive constant on the sonic curve.
Let
\begin{equation}\label{c3.33}
c(\xi, f(\xi))= c_0,
\end{equation}
where $c_0>0$ is a constant. Applying \eqref{3} and the identity: $U^2 + V^2 =c^2= c_0^2$ on $\Gamma_{\rm sonic}$,
we have
\begin{align*}
 \big( 1+ (f'(\xi))^2  \big)V^2 (\xi,f(\xi))=U^2 (\xi,f(\xi)) + V^2 (\xi,f(\xi)) =c_0^2.
\end{align*}
This yields that, on $\Gamma_{\rm sonic}$,
\begin{align}
& V(\xi,f(\xi))= \pm \frac{c_0}{\sqrt{1+ (f'(\xi))^2}},\label{VS}\\
&U(\xi,f(\xi))= \mp \frac{ c_0 f'(\xi)}{\sqrt{1+ (f'(\xi))^2}}.\label{US}
\end{align}
On $\Gamma_{\rm sonic}$, \eqref{1} becomes
  \begin{align}\label{UVS}
    V^2 u_{\xi} - UV (u_{\eta} + v_{\xi}) + U^2 v_{\eta} =0.
  \end{align}
Applying \eqref{VS}--\eqref{UVS} yields
  \begin{align}\label{equv}
   u_{\xi} + f'(\xi)u_{\eta} + f'(\xi)v_{\xi} +  (f'(\xi))^2v_{\eta} =0.
  \end{align}
That is, if the tangential derivative along $\Gamma_{\rm sonic}$ is denoted as
$$
\frac{\dif}{\dif\xi}:=\partial_{\xi}+f'(\xi)\partial_{\eta},
$$
then
 \begin{align}\label{equvxi}
   \frac{\dif u}{\dif\xi} + f'(\xi) \frac{\dif v}{\dif\xi} =0.
 \end{align}
Taking the tangential derivative $\frac{\dif}{\dif\xi}$ on equation \eqref{3} along $\Gamma_{\rm sonic}$
leads to
  \begin{align}\label{5}
  \frac{\dif u}{\dif\xi} -1+ f''(\xi) V + f'(\xi) \big( \frac{\dif v}{\dif\xi} -f'(\xi) \big) =0.
 \end{align}
Substituting  \eqref{VS} and \eqref{equvxi} into \eqref{5}, we have
 \begin{align*}
   \pm \frac{c_0}{\sqrt{1+ (f'(\xi))^2}}f''(\xi)-1 -(f'(\xi))^2=0.
 \end{align*}
Then
  \begin{align}\label{fcircle}
    f''(\xi) = \pm \frac{(1 + (f'(\xi))^2)^{\frac32}}{c_0}.
  \end{align}

{\bf 3}. In this step, we show that the sonic curve lies locally on the circle with radius $c_0$.
In fact, if this is true for all points on the sonic curve,
since any two coordinate neighborhoods on the curve have non-empty intersections,
then the two circles for these neighborhoods must coincide,
that is, the sonic curve is on the same circle in the union of the two neighborhoods,
Since the sonic curve is connected and the above fact is shown for arbitrary point on the sonic curve,
we know that the sonic curve is on a circle globally.
Therefore, in what follows, we show that the sonic curve lies on the circle locally.
Then it suffices to solve equation \eqref{fcircle} to show that $\eta =f(\xi)$ lies on the circle
with the center at $(\pm U(0,f(0)), f(0)\pm V(0,f(0)))$ and radius $c_0$.

Let $F(\xi) = f'(\xi)$. Then \eqref{fcircle} yields
\begin{align}
  \frac{ F'(\xi)}{\big(1+F^2(\xi)\big)^{\frac32}} = \pm\frac{1}{c_0}.
\end{align}
It follows that
\begin{align}
  \frac{F(\xi)}{\sqrt{1+F^2(\xi)}} + a = \pm\frac{\xi}{c_0},
\end{align}
where $a = - \frac{F(0)}{\sqrt{1+F^2(0)}}$.
Then
\begin{align}
F(\xi) = \frac{\pm\frac{\xi}{c_0} -a}{\sqrt{1 - (\pm\frac{\xi}{c_0} -a)^2}}.
\end{align}
Thus, we have
\begin{align}\label{f=}
  f(\xi)
  =f(0)+ \int_0^{\xi} \frac{\pm\frac{\tau}{c_0} -a}{\sqrt{1 - (\pm\frac{\tau}{c_0} -a)^2}} \dif \tau
  = \mp c_0 \sqrt{1-(\pm\frac{\xi}{c_0} -a)^2} + b,
\end{align}
where $b = f(0)\pm c_0\sqrt{1-a^2} = f(0)\pm \frac{ c_0}{\sqrt{1+F^2(0)}}$.
Then
\begin{align}
  (f(\xi) - b)^2 = c_0^2 \big(1-(\pm\frac{\xi}{c_0} -a)^2   \big),
\end{align}
that is,
\begin{align}
  (\xi \mp a c_0)^2 + (f(\xi) - b)^2 = c_0^2.
\end{align}
Thus, $(\xi,f(\xi))$ lies on the circle with center at $(\pm ac_0, b)$ and radius $c_0$.
Notice that, by conditions \eqref{3}--\eqref{c3.33},
{\it i.e.}, $U + f'(\xi) V=0$ and $U^2 + V^2 = c_0^2$, we have
\begin{align*}
  ac_0 &= - \frac{F(0)}{\sqrt{1+F^2(0)}} c_0 = - \frac{f'(0)}{\sqrt{1+(f'(0))^2}}c_0 = c_0 \frac{U}{V}\frac{1}{\sqrt{1+ \frac{U^2}{V^2}}}\big|_{(\xi,f(\xi))=(0,f(0))} = U(0,f(0))
\end{align*}
and
\begin{align*}
  b &= f(0)\pm \frac{ c_0}{\sqrt{1+F^2(0)}} = f(0)\pm \frac{ c_0}{\sqrt{1+(f'(0))^2}} = f(0)\pm V(0,f(0)).
\end{align*}

Therefore, $(\xi,f(\xi))$ lies on the circle with center at $(\pm U(0,f(0)), f(0)\pm V(0,f(0)))$ and radius $c_0$.
\end{proof}

On the other hand, if the sonic curve is a circle, $(U,V)$ is not necessarily a normal vector to
the sonic curve $\Gamma_{\rm sonic}$;
that is, the sonic points are not necessarily exceptional.
In fact, a smooth transonic solution near a given sonic circle $\Gamma_{\rm sonic}$, on which any point is not exceptional,
is given in Lemma \ref{sonicarclem} below.
Without loss of the generality, we assume that the sonic circle $\Gamma_{\rm sonic}$
lies on the circle:
\begin{align}\label{ci}
  \xi^2 + \eta^2 =r_0^2,
\end{align}
where $r_0$ is a positive constant.
Let
\begin{align}\label{rthetac}
  (\xi, \eta) = (r\cos\theta, r\sin\theta).
\end{align}
Let
\begin{align}
&\Gamma_{\rm sonic}\defs \big\{(r,\theta)\,:\,r=r_0,\, \theta\in [ \frac{\pi}{6}, \frac{\pi}{4}]\big\},\\
&\mathcal{D}_{\varepsilon} \defs \big\{ (r, \theta)\,:\, |r-r_0|< \varepsilon, \, \theta\in[ \frac{\pi}{6}, \frac{\pi}{4}] \big\}.\label{mathDeq}
\end{align}
We now construct a smooth transonic solution near $\Gamma_{\rm sonic}$.

\begin{lem}\label{sonicarclem}
There exists a smooth solution $(u,v,c)$ satisfying equations \eqref{1}--\eqref{2} in domain $\mathcal{D}_\varepsilon$ with the data{\rm :}
\begin{align}\label{thetauvc}
    u =\frac{2}{\gamma+1} r_0 \cos\theta, \quad v= r_0 \sin\theta, \quad  c= \mu^2 r_0 \cos\theta\qquad\,\,\,\text{on }\Gamma_{\rm sonic},
\end{align}
where $\mu^2 =\frac{\gamma-1}{\gamma+1}$, and the small constant $\varepsilon>0$ depends only on $K$, $\gamma$, and $r_0$.
The solution is supersonic if $r>r_0$ and is subsonic if $r<r_0$.
In addition, any point on $\Gamma_{\rm sonic}$
is not exceptional.
\end{lem}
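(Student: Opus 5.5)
The plan is to obtain the solution by the Cauchy--Kovalevskaya theorem, applied to the second-order equation \eqref{self-similar}--\eqref{rhoeqself} for $\varphi$ with analytic Cauchy data on the circle \eqref{ci}. The key observation is that at a sonic point the principal symbol of \eqref{1}, namely $c^2|\mathbf k|^2-(U k_1+Vk_2)^2$, degenerates exactly in the direction $\mathbf k\parallel(U,V)$. Hence $\Gamma_{\rm sonic}$ is non-characteristic precisely at non-exceptional points, and the degeneracy on the sonic curve is harmless for Cauchy--Kovalevskaya there.

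\textbf{Step 1 (the data).} From \eqref{thetauvc} and \eqref{UVDEFS}, on $r=r_0$ we get
\[
U=\big(\tfrac{2}{\gamma+1}-1\big)r_0\cos\theta=-\mu^2r_0\cos\theta,\qquad V=0 .
\]
Thus $U^2+V^2=c^2$ and every point of the arc is sonic. The vector $(U,V)$ is horizontal, while the normal to the circle is $(\cos\theta,\sin\theta)$ with $\theta\in[\frac{\pi}{6},\frac{\pi}{4}]$. These are never parallel, so no point is exceptional and the arc is non-characteristic; this is consistent with Lemma \ref{arc}(i), since $\partial_\theta c\neq0$ there.

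Next I define the Cauchy data for $\varphi$ on the arc. By \eqref{sonicc}, $\varphi=K-\frac{\gamma+1}{2(\gamma-1)}c^2$ there. I must check the compatibility $\partial_\theta\varphi=(U,V)\cdot\partial_\theta(\xi,\eta)$, which is the integrated form of \eqref{1.1}. Both sides equal $\mu^2r_0^2\cos\theta\sin\theta$, using $\frac{\gamma+1}{\gamma-1}\mu^4=\mu^2$. The normal datum is $\partial_r\varphi=U\cos\theta+V\sin\theta=-\mu^2r_0\cos\theta$. All data are real-analytic in $\theta$.

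\textbf{Step 2 (existence).} I solve \eqref{self-similar} for $\partial_r^2\varphi$; its coefficient is $c^2-(\partial_r\varphi)^2=\mu^4r_0^2\cos^2\theta\sin^2\theta>0$ on the data. By Cauchy--Kovalevskaya, this yields an analytic solution in a neighborhood of each point of the arc, and local uniqueness glues these together. Compactness of $\theta\in[\frac{\pi}{6},\frac{\pi}{4}]$ gives a uniform width $\varepsilon$ depending only on $K,\gamma,r_0$. Setting $(u,v)=D\varphi+(\xi,\eta)$ and $c$ via \eqref{A2} then gives a solution of \eqref{1}--\eqref{2} in $\mathcal D_\varepsilon$ attaining \eqref{thetauvc}.

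\textbf{Step 3 (type on each side).} This is the main obstacle. Let
\[
G:=|D\varphi|^2-c^2=\tfrac{\gamma+1}{2}|D\varphi|^2-(\gamma-1)(K-\varphi),
\]
which vanishes on $r=r_0$. It suffices to show $\partial_rG>0$ on the arc, after which shrinking $\varepsilon$ gives $G>0$ for $r>r_0$ (supersonic) and $G<0$ for $r<r_0$ (subsonic). We have
\[
\partial_rG=(\gamma+1)\big(\varphi_r\varphi_{rr}+r^{-2}\varphi_\theta\varphi_{r\theta}\big)+(\gamma-1)\varphi_r .
\]
Here $\varphi_r$, $\varphi_\theta$ and $\varphi_{r\theta}$ are known from the data, and $\varphi_{rr}$ is determined by the equation written in polar coordinates on the arc (equivalently, \eqref{UVS}-type relations for $u_\xi,\dots$).

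I would carry out this explicit computation and verify the sign for $\theta\in[\frac{\pi}{6},\frac{\pi}{4}]$. If the sign turns out wrong, the fallback is to modify only the free normal datum. This amounts to replacing \eqref{thetauvc} by its consistent counterpart, or using the remaining freedom in the choice of $\theta$-range or orientation, and then recomputing. The requirement $c>0$ on the arc, and hence positivity of the coefficient of $\varphi_{rr}$, holds since $\cos\theta>0$.
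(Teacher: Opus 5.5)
\textbf{The gap is in your Step 3.} Your Steps 1--2 match the paper's argument. The paper also checks Bernoulli compatibility along the arc. It then applies Cauchy--Kovalevskaya to $\psi=\varphi+\frac12 r^2$ in $(x,y)=(r_0-r,\theta)$, with non-characteristic coefficient $a_{11}(0,y)=\mu^4r_0^2\cos^2y\sin^2y$, which is your $c^2-\varphi_r^2$. One slip: $\partial_r\varphi=-\mu^2r_0\cos^2\theta$, not $-\mu^2r_0\cos\theta$. With the value you wrote, that coefficient would vanish; you used the correct value in Step 2.

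The transonic claim (supersonic for $r>r_0$, subsonic for $r<r_0$) is part of the statement, and you leave the decisive sign computation undone. Your formula for $\partial_rG$ is also wrong. Since $|D\varphi|^2=\varphi_r^2+r^{-2}\varphi_\theta^2$, you dropped the term $-(\gamma+1)r^{-3}\varphi_\theta^2$, which is nonzero on the arc because $\varphi_\theta=\mu^2r_0^2\cos\theta\sin\theta$. The proposed fallback is not admissible either. The data \eqref{thetauvc} fix both $\varphi$ and $\partial_r\varphi$ on $r=r_0$, so there is no free normal datum, and the range $\theta\in[\frac{\pi}{6},\frac{\pi}{4}]$ is part of the statement. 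Changing either would prove a different lemma.

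The missing idea is how to read off the first derivatives of $(u,v,c)$ on the arc:
\begin{itemize}
\item On the arc $V=0$ and $U^2=c^2$, so \eqref{1} collapses to $c^2v_\eta=0$, i.e. $v_\eta=0$. This gives $v_r=-\cos^2\theta/\sin\theta$.
\item Irrotationality $u_\eta=v_\xi$ then gives $u_r=-\cos\theta/\sin^2\theta+\frac{2}{\gamma+1}\cos\theta$.
\item The differentiated Bernoulli law $Uu_r+Vv_r+\frac{2c}{\gamma-1}c_r=0$, with $U=-c$, gives $c_r=\frac{\gamma-1}{2}u_r$.
\end{itemize}
Hence, on the arc, $\partial_r(U^2+V^2-c^2)=-2c(U_r+c_r)=(\gamma-1)r_0\cot^2\theta$. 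Equivalently, $\varphi_{rr}=-\sin^{-2}\theta-\mu^2\cos^2\theta$ there, and the corrected formula for $\partial_rG$ returns the same value. This is bounded below by a positive constant on $[\frac{\pi}{6},\frac{\pi}{4}]$. Shrinking $\varepsilon$ then gives the stated sign of $G$ on each side, which is exactly the paper's argument.
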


\begin{proof} In order to obtain a solution in the neighborhood of $\Gamma_{\rm sonic}$,
we use the Cauchy-Kowalevskaya theorem in a small ball near every point of $\Gamma_{\rm sonic}$ up to and including endpoints.
For that, we need to define the data on a slightly larger arc. Therefore, define
\begin{equation}\label{sonicraceq}
\Gamma_{\rm sonic}^{\rm arc}:= \big\{(r,\theta)\,:\,r=r_0,\, \theta\in ( \frac{\pi}{7}, \frac{\pi}{3})\big\},
\end{equation}
and let \eqref{thetauvc} be defined on $\Gamma_{\rm sonic}^{\rm arc}$.
This proof is divided into two steps.

\smallskip
{\bf 1}. In this step, we show that, if the solution exists, then the sonic points are not exceptional and the solution is transonic.
By \eqref{thetauvc},
 \begin{align}\label{UVarc}
   U = u-r_0 \cos\theta = -\mu^2 r_0 \cos\theta, \quad V = v- r_0\sin\theta = 0 \qquad\,\, \text{on $\Gamma_{\rm sonic}^{\rm arc}$}.
 \end{align}
From \eqref{thetauvc}--\eqref{UVarc},  we see that $(U,V)|_{\Gamma_{\rm sonic}^{\rm arc}}$ is not a normal vector to $\Gamma_{\rm sonic}^{\rm arc}$ and
 \begin{align}\label{sonicUVc}
   U^2 + V^2 =c^2 \qquad\,\, \text{on $\Gamma_{\rm sonic}^{\rm arc}$}.
 \end{align}
Thus, any point on the sonic circle $\Gamma_{\rm sonic}^{\rm arc}$ is not exceptional.

We now solve equations \eqref{1}--\eqref{2} in $\mathcal{D}_\varepsilon$ with
data \eqref{thetauvc} by the Cauchy-Kovalevskaya theorem.
It follows from \eqref{1.1} that there exists a function $\varphi$ so that $\varphi_{\xi} = U = u-\xi$ and $\varphi_{\eta} = V= v-\eta$.

Thus, before solving equations \eqref{1}--\eqref{2}, we should check the compatibility condition,
that is, the Bernoulli equation \eqref{2} holds on the pseudo-sonic circle $\Gamma_{\rm sonic}^{\rm arc}$.
In fact, because the tangential derivative along $\Gamma_{\rm sonic}^{\rm arc}$ is $\partial_{\theta}$, the following identity
holds along $\Gamma_{\rm sonic}^{\rm arc}$:
\begin{equation}\label{com:Ber}
\big(\frac{c^2}{\gamma-1}+ \frac{U^2 + V^2}{2} + \varphi \big)_{\theta}=0\qquad\,\,\mbox{along }\Gamma_{\rm sonic}^{\rm arc}.
\end{equation}
It follows from \eqref{thetauvc}--\eqref{UVarc} that
 \[
 u_{\theta} = - c_{\theta} - r_0 \sin\theta =-\frac{2}{\gamma+1}r_0 \sin\theta, \quad v_{\theta} = r_0\cos\theta,\quad
 c_{\theta} = - \mu^2 r_0 \sin\theta\qquad\,\, \text{on $\Gamma_{\rm sonic}^{\rm arc}$}.
 \]
Then
\begin{align}\label{eqqB1}
Uu_{\theta} + Vv_{\theta} + \frac{2}{\gamma-1} cc_{\theta}=0\qquad\,\, \text{on $\Gamma_{\rm sonic}^{\rm arc}$}.
\end{align}
Moreover, on $\Gamma_{\rm sonic}^{\rm arc}$,
\begin{align*}
&\varphi_{\theta}=U(-r_0\sin\theta)+Vr_0\cos\theta=\mu^2r_0^2\sin\theta\cos\theta,\\
&UU_{\theta}=Uu_{\theta}-U(-r_0\sin\theta)=Uu_{\theta}-\mu^2r_0^2\sin\theta\cos\theta.
\end{align*}
Therefore, \eqref{com:Ber} holds.

We now show that the solution of equations \eqref{1}--\eqref{2} with
data \eqref{thetauvc}, if it exists, is transonic.
First, we rewrite equations \eqref{1}-\eqref{2} in  the $(r,\theta)$-coordinates.
Note that
\begin{align}\label{rthetaxieta}
\frac{\partial r}{\partial \xi} = \cos\theta, \quad\frac{\partial r}{\partial \eta} = \sin\theta,
\quad  \frac{\partial \theta}{\partial \xi} = -\frac{\sin\theta}{r},
\quad\frac{\partial \theta}{\partial \eta} = \frac{\cos\theta}{r},
\end{align}
so that
\begin{align}
u_{\xi} =&  \cos\theta u_r -\frac{\sin\theta}{r} u_{\theta},\quad
u_{\eta} = \sin\theta  u_r+  \frac{\cos \theta}{r}u_{\theta},\label{uxietatourtheta}\\
v_{\xi} =&  \cos\theta v_r - \frac{\sin\theta}{r}v_{\theta},\quad
v_{\eta} =  \sin\theta v_r +  \frac{\cos \theta}{r}v_{\theta}.\label{vxietatovrtheta}
\end{align}
Thus, equations \eqref{1}--\eqref{1.1} can be rewritten as
\[
(c^2 -U^2)\big(\cos\theta u_r  - \frac{\sin\theta}{r} u_{\theta}\big) - 2UV \big(\cos\theta  v_r
- \frac{\sin\theta}{r}v_{\theta}\big) +(c^2 -V^2)\big( \sin\theta v_r + \frac{\cos \theta}{r}v_{\theta} \big) =0,
\]
and
\[
\sin\theta u_r + \frac{\cos \theta}{r}u_{\theta}  = \cos\theta   v_r- \frac{\sin\theta}{r}v_{\theta}.
\]
That is,
\begin{equation}
\begin{array}{rl}
&(c^2 -U^2)\cos\theta u_r + \big( (c^2 - V^2)\sin\theta - 2UV \cos\theta  \big)v_r \\[1mm]
&=(c^2 -U^2)\frac{\sin\theta}{r}u_{\theta}- \big( (c^2 - V^2)\frac{\cos\theta}{r} + 2UV \frac{\sin\theta}{r} \big)v_{\theta},\label{1eqq1}
\end{array}
\end{equation}
and
\begin{equation}
\sin\theta u_r - \cos\theta v_r = - \frac{\cos\theta}{r} u_{\theta}- \frac{\sin\theta}{r} v_{\theta}.\label{ueta=vxipolar}
\end{equation}
In addition, taking derivatives on \eqref{2}, we have
\begin{align}
  & Uu_{\xi} + Vv_{\xi} + \frac{2}{\gamma -1} cc_{\xi} =0,\label{Berxi=}\\
  &Uu_{\eta} + Vv_{\eta} + \frac{2}{\gamma -1} cc_{\eta} =0. \label{Bereta=}
\end{align}
Applying \eqref{uxietatourtheta}--\eqref{vxietatovrtheta} and \eqref{Berxi=}--\eqref{Bereta=}, we have
\begin{align}
& U\big(\cos\theta u_r -\frac{\sin\theta}{r} u_{\theta}\big) + V\big( \cos\theta v_r - \frac{\sin\theta}{r}v_{\theta}\big) + \frac{2c}{\gamma -1} \big( \cos\theta c_r -\frac{\sin\theta}{r} c_{\theta}\big) =0,\label{Berthetar1}\\
&U\big( \sin\theta  u_r+  \frac{\cos \theta}{r}u_{\theta} \big) + V\big( \sin\theta v_r +  \frac{\cos \theta}{r}v_{\theta}  \big) + \frac{2c}{\gamma -1} \big( \sin\theta c_r +  \frac{\cos \theta}{r}c_{\theta}  \big) =0. \label{Berthetar2}
\end{align}
Then (\eqref{Berthetar2}$\times \cos\theta$) minus (\eqref{Berthetar1} $\times \sin\theta$) yields
\begin{align}\label{Berthetaeq}
  U u_{\theta} + Vv_{\theta} + \frac{2c}{\gamma -1}c_{\theta} =0,
\end{align}
and (\eqref{Berthetar1} $\times \cos\theta$) plus (\eqref{Berthetar2}$\times \sin\theta$) yields
\begin{align}\label{Berreq}
  Uu_r + Vv_r + \frac{2c}{\gamma -1}c_r=0.
\end{align}

In addition, it follows from \eqref{thetauvc}--\eqref{UVarc}, \eqref{1eqq1}--\eqref{ueta=vxipolar}, and \eqref{Berreq} that, on $\Gamma_{\rm sonic}^{\rm arc}$,
\begin{align}\label{ruveq}
   v_r = - \frac{\cos^2\theta}{\sin\theta}, \quad   u_r = -\frac{\cos\theta}{\sin^2\theta} + \frac{2}{\gamma+1}\cos\theta,\quad
c_r  = \mu^2\cos\theta - \frac{\gamma-1}{2}\frac{\cos\theta}{\sin^2\theta}.
\end{align}

Let
$  \mathcal{F}(\theta,r) \defs  (U^2+V^2-c^2)(\theta,r)$.
Then $ \mathcal{F}(\theta,r_0) = 0$ by \eqref{sonicUVc}.
By \eqref{thetauvc}--\eqref{UVarc} and \eqref{ruveq}, we have
\begin{align*}
    \mathcal{F}_r(\theta,r_0) = 2UU_r+ 2VV_r - 2c c_r = -2c( U_r +  c_r )
= (\gamma-1)r_0 \cot^2\theta >0.
\end{align*}
Therefore,
\begin{align}
    \mathcal{F}(\theta, r) &=0 \qquad \text{when $r = r_0$ (\text{{\it i.e.}, the solution is sonic})},\label{1entropyARC}\\
    \mathcal{F}(\theta,r) &> 0 \qquad \text{when $r> r_0$ (\text{{\it i.e.}, the solution is supersonic})},\\
     \mathcal{F}(\theta,r) &< 0 \qquad \text{when $r<r_0$ (\text{{\it i.e.}, the solution is subsonic})}.\label{entropyARC}
  \end{align}

\smallskip
{\bf 2}. In this step, we prove the existence of solution $(u,v,c)$ to equations \eqref{1}--\eqref{2} in $\mathcal{D}_\varepsilon$ with
data \eqref{thetauvc}.
Let
\begin{align*}
 \psi\defs \varphi+\frac12 r^2,
\qquad\,
(x, y)\defs (r_0-r,\,\theta).
\end{align*}

By \eqref{2} and \eqref{ruveq}, direct calculations yield that \eqref{thetauvc} holds if and only if
\begin{align}\label{noncbry}
  \psi(0,y) = \frac12 \mu^2 r_0^2 (\sin^2 y +\frac{2}{\gamma-1})+K,
  \quad\,\,  \psi_{x}(0,y) =r_0( \mu^2\cos^2y-1).
\end{align}

In fact, using \eqref{2} and $U^2 + V^2 = c^2$ on $\Gamma_{\rm sonic}^{\rm arc}$, we have
\begin{align}
  \frac{\gamma +1}{2(\gamma-1)} c^2 +  \varphi =K\qquad \text{on }  \,\Gamma_{\rm sonic}^{\rm arc}.
\end{align}
Then
\begin{align}
  \varphi(0,y) = K - \frac{\gamma +1}{2(\gamma-1)} c^2 =  K  -  \frac{\gamma +1}{2(\gamma-1)} \mu^4 r_0^2 \cos^2\theta = K  - \frac{\gamma -1}{2(\gamma+1)} r_0^2 \cos^2\theta,
\end{align}
so that
\begin{align}
  \psi(0,y) &= K  - \frac{\gamma -1}{2(\gamma+1)} r_0^2 \cos^2\theta + \frac12 r_0^2\nonumber\\[1mm]
  &= \frac{\gamma -1}{2(\gamma+1)} r_0^2 \big( \frac{2}{\gamma-1} + \sin^2\theta \big) + K\nonumber\\[1mm]
  &= \frac12 \mu^2 r_0^2 \big(\sin^2y +  \frac{2}{\gamma-1}\big) + K.\label{psi0y=}
\end{align}

We now calculate $ \psi_{x}(0,y)$.
By \eqref{Berreq}, we have
\begin{align}\label{varphir=}
  - \varphi_r (r_0, \theta)=\big(\frac{2c}{\gamma-1} c_r + UU_r + VV_r\big)(r_0, \theta).
\end{align}
Then, by \eqref{thetauvc}--\eqref{UVarc} and \eqref{ruveq},
\begin{align*}
 - \varphi_r (r_0, \theta)&= \frac{2}{\gamma-1}\mu^2 r_0 \cos\theta  \big( \mu^2\cos\theta - \frac{\gamma-1}{2}\frac{\cos\theta}{\sin^2\theta} \big)\\[1mm]
&\quad\, -\mu^2 r_0 \cos\theta \big( -\frac{\cos\theta}{\sin^2\theta} + \frac{2}{\gamma+1}\cos\theta - \cos\theta \big)\\[1mm]
& = \mu^2 r_0 \cos^2\theta.
\end{align*}
This implies
\begin{equation}\label{psix0y=}
  \psi_{x}(0,y) = \varphi_x (0,y) - r_0 = \mu^2 r_0 \cos^2\theta - r_0 = r_0 \big( \mu^2\cos^2 y  - 1 \big).
\end{equation}
Thus, \eqref{noncbry} follows. Moreover, by \eqref{sonicUVc}, we know
\begin{align}
  \varphi_{\xi}^2 + \varphi_{\eta}^2 = U^2 + V^2 = c^2\qquad \text{on } \,\Gamma_{\rm sonic}^{\rm arc}.
\end{align}

In the $(x, y)$--coordinates, \eqref{1}--\eqref{2}, \emph{i.e.},
\eqref{1eqq1}--\eqref{ueta=vxipolar} and \eqref{2}, can be rewritten as
\begin{equation}
a_{11} \psi_{xx} + 2 a_{12}\psi_{xy} + a_{22}\psi_{yy} + a_0=0,\label{psixyeq=}
\end{equation}
where
\begin{align}
a_{11}=a_{11}(D\psi, \psi, x,y)\defs&  c^2 - \big(\psi_x + r_0 - x \big)^2,\label{a11xyeq1}\\
a_{12}=a_{12}(D\psi, \psi, x,y)\defs& - \frac{1}{(r_0 - x)^2} (\psi_x +r_0 - x)\psi_y,\label{a12xyeq1}\\
a_{22}=a_{22}(D\psi, \psi, x,y)\defs&\frac{1}{(r_0 - x)^2}\big( c^2 - \frac{1}{(r_0 - x)^2} \psi_{y}^2\big),\label{a22xyeq1}\\
a_0=a_0(D\psi, \psi, x,y)\defs& - \frac{c^2}{r_0 - x}\psi_x - \frac{1}{(r_0 - x)^3} \big(\psi_x + 2(r_0 - x)\big)\psi_y^2, \label{a0xyeq1}\\
c^2 = c^2(D\psi, \psi, x,y) \defs& (\gamma-1)\big(K - \psi - (r_0 - x)\psi_x - \frac12 ( \psi_x^2 + \frac{1}{(r_0 - x)^2} \psi_{y}^2) \big).\label{c2xyeq=}
\end{align}
We notice that equation \eqref{psixyeq=} is degenerate on the sonic arc, since
\begin{align*}
& (a_{11} a_{22} - a_{12}^2 )(0,y)\\
&= \big( c^2 - \big(\psi_x + r_0 \big)^2\big)\frac{1}{r_0^2}\big( c^2 - \frac{1}{r_0^2} \psi_{y}^2\big)(0,y) - \big(\frac{1}{r_0^2} (\psi_x + r_0)\psi_y\big)^2 (0,y)\\
&= \frac{1}{r_0^2}\big(\mu^4 r_0^2 \cos^2 y -  r_0^2\mu^4\cos^4 y\big)\big(\mu^4 r_0^2 \cos^2 y - \frac{1}{r_0^2} r_0^4\mu^4 \sin^2y \cos^2y\big)\\
&\quad\, - \frac{1}{r_0^4}  r_0^2\mu^4\cos^4 y r_0^4\mu^4 \sin^2y \cos^2y\\
&= 0.
\end{align*}

In addition, the sonic arc $\Gamma_{\rm sonic}^{\rm arc}$ defined in \eqref{sonicraceq} becomes
\begin{align}\label{sonicraceqb}
  \widetilde{\Gamma_{\rm sonic}^{\rm arc}} \defs \big\{(x,y)\,:\, x=0,\,y\in( \frac{\pi}{7},\frac{\pi}{3})\big\},
\end{align}
and  $\Gamma_{\rm sonic}$ becomes
\begin{align}
  \widetilde{\Gamma_{\rm sonic}} \defs \big\{(x,y)\,:\, x=0,\,y\in[ \frac{\pi}{6},\frac{\pi}{4}]\big\}.
\end{align}
The coefficient
$a_{11}$
on  $\widetilde{\Gamma_{\rm sonic}^{\rm arc}}$ satisfies 
\begin{align}\label{noncharaeq}
\big(c^2 - \big(\psi_x + (r_0 - x )\big)^2\big)(0,y)=\mu^4 r_0^2 \cos^2 y -  (\mu^2 r_0\cos^2 y)^2
 = \mu^4 r_0^2 \cos^2 y \sin^2 y \neq 0.
\end{align}
Therefore, $\widetilde{\Gamma_{\rm sonic}^{\rm arc}}$ is a non-characteristic boundary.
Then we can use the Cauchy-Kovalevskaya theorem (see \cite[page 229]{Evans}) to prove the existence of
a solution near $\widetilde{\Gamma_{\rm sonic}}$. More precisely, let
\begin{align}\label{newpsi}
\widetilde{\psi}(x, y) \defs \psi(x, y) - \frac12 \mu^2 r_0^2 \big(\sin^2y +  \frac{2}{\gamma-1}\big)-K - r_0 \big( \mu^2\cos^2 y  - 1 \big)x.
\end{align}
By \eqref{noncbry}, it is direct to see that
\begin{align}\label{brytildep}
  \widetilde{\psi}(0,y) =0, \quad  \widetilde{\psi}_{x} (0,y) =0.
\end{align}
In addition,  \eqref{psixyeq=} can be rewritten as
\begin{align}\label{brytildepe}
a_{11}\widetilde{\psi}_{xx} + 2 a_{12}\widetilde{\psi}_{xy} + a_{22} \widetilde{\psi}_{yy} + \widetilde{a}_0=0,
\end{align}
where
\begin{align*}
 \widetilde{a}_0 =  a_{22}\mu^2 r_0 (r_0 - 2x)\cos 2y - 2 a_{12}r_0 \mu^2 \sin 2y + a_0.
\end{align*}
Let
\begin{align*}
  \mbtheta=(\vartheta^1, \vartheta^2, \vartheta^3, \vartheta^4)^{\top}\defs ( \widetilde{\psi}, \widetilde{\psi}_{x}, \widetilde{\psi}_{y}, x)^{\top}.
\end{align*}
Then $\mbtheta$ satisfies the following equations and boundary conditions
\begin{align}
 \mbtheta_{x} =& \mathbf{B}(\mbtheta, y)\mbtheta_{y} + \mba (\mbtheta, y)\qquad \text{for $\sqrt{(y-y_0)^2 + x^2}< \varepsilon$},\label{foeq}\\
 \mbtheta=& {\mathbf{0}} \qquad\qquad\qquad\qquad\qquad
 \text{for $|y-y_0|< \varepsilon$ and $x=0$},\label{bryeqarc}
\end{align}
where $\mathbf{B} = (b^{kl})$ is a $4\times 4$ matrix and $k,l=1,2,3,4$.
Direct calculations yield
\[ \mathbf{B} := \begin{pmatrix} 0& 0 & 0& 0\\
 0& -\frac{2a_{12}(\mbtheta, y)}{a_{11}(\mbtheta, y)} & -\frac{a_{22}(\mbtheta,y)}{a_{11}(\mbtheta, y)} & 0 \\
0& 1 & 0& 0\\
0& 0 & 0& 0
	\end{pmatrix} \]
and $\mba=(a^i)$ is a vector with the following components:
\begin{align}\label{ccde}
   \mba (\mbtheta, y) =& \big( a^1(\mbtheta, y), a^2(\mbtheta, y),a^3(\mbtheta, y),a^4 (\mbtheta, y) \big)^{\top}
   = \big( \vartheta^2, -\frac{\widetilde{a}_0(\mbtheta, y)}{a_{11}(\mbtheta, y)}, 0, 1\big)^{\top}.
\end{align}

It is direct to see that, for any $ y_0\in[\frac{\pi}{6}, \frac{\pi}{4}]$, there exists a constant $r_0>0$ so that
$a_{11}$, $a_{12}$,
$a_{22}$, and $a_0$ are analytic in the ball $B_{\frac{r_0}{2}}(\mathbf{0}, y_0)\subset\mathbb{R}^5$
with center at $(\mathbf{0}, y_0)$ and radius $\frac{r_0}{2}$.
By \eqref{noncharaeq}
and the regularity of $a_{11}$ and $\widetilde{\psi}$, we can choose a constant $r_1\in(0,\frac{r_0}{2})$
small enough such that $a_{11}(\mbtheta, y)>0$ in $B_{r_1}(\mathbf{0}, y_0)\subset\mathbb{R}^5$
for any  $y_0\in[\frac{\pi}{6}, \frac{\pi}{4}]$.
Therefore, $\frac{a_{22}(\mbtheta,y)}{a_{11}(\mbtheta, y)}$, $\frac{a_{12}(\mbtheta,y)}{a_{11}(\mbtheta, y)}$,
and $\frac{\widetilde{a_0}(\mbtheta,y)}{a_{11}(\mbtheta, y)}$ are analytic in $B_{r_1}(\mathbf{0}, y_0)$.
Hence, for any $y_0\in[\frac{\pi}{6}, \frac{\pi}{4}]$,
\begin{equation}\label{3.84x}
\mbox{matrix $\mathbf{B}=(b^{kl})_{4\times4}$ and vector $\mba = (a^i)_{4\times 1}$ are analytic in $B_{r_1}(\mathbf{0}, y_0)$.}
\end{equation}
Then, for any $y_0\in [\frac{\pi}{6}, \frac{\pi}{4}]$, there exists $\varepsilon(y_0)>0$ such that,
in $B_{\varepsilon(y_0)}(0, y_0)$,
the boundary value problem \eqref{foeq}--\eqref{bryeqarc} admits an analytic solution of the following form:
\begin{align}\label{pseq}
    \mbtheta = \sum_{|\alpha|=0}^{\infty} \mbtheta_{\alpha}(y - y_0)^{\alpha_1} x^{\alpha_2},
  \end{align}
where $\mbtheta_{\alpha} = \frac{D^{\alpha} \mbtheta(0,y_0)}{\alpha!}$, $\alpha=(\alpha_1, \alpha_2)$,
and constants $\alpha_1$ and $\alpha_2$ are integers.

Therefore, for any given sonic point $(0, y_0)\in \widetilde{\Gamma_{\rm sonic}}$,
we derive the solution in a small neighborhood of the sonic point $(0, y_0)$.
Apply it in some neighborhood of every point of the closure of $\widetilde{\Gamma_{\rm sonic}}$. By compactness, $\widetilde{\Gamma_{\rm sonic}}$ is covered by a finite subcollection of these neighborhoods. This gives a uniform neighborhood of $\widetilde{\Gamma_{\rm sonic}}$.
We then construct the solution near $\Gamma_{\rm sonic}$.
\end{proof}

Then Theorem \ref{theorem1} follows from Lemmas \ref{arc}--\ref{sonicarclem}.
This completes the proof.

\section{Proof of Theorems \ref{theorem4}--\ref{theorem3} for the Case that the Pseudo-Velocity Is Tangential to the Sonic Curves}
For $f$ given in \eqref{soniccurve2}, according to Definition \ref{defexcep}, if the sonic point is non-exceptional,
\begin{align*}
  U + f'\,V\neq 0.
\end{align*}
Then either vector $(V, -U)(\xi,f(\xi))$ or vector $(-V, U)(\xi,f(\xi))$ points to the supersonic region. That is, there are the following two cases:

\smallskip
\emph{Case I}: Vector $(V, -U)$ points to the supersonic region (see the left-hand side in Fig. \ref{Q1Q2}).
Let $q^2 \defs U^2 + V^2$ and $\partial^{\perp} \defs V \partial_{\xi} -U \partial_{\eta}$.
From \eqref{supsonicdef}--\eqref{sonicDvarphi=c}, we see that $q^2>c^2$ in the supersonic region
and $q^2=c^2$ on the sonic curve, so that
\begin{align}\label{towardtosup}
{\partial}^{\perp}(q^2-c^2)|_{\Gamma_{\rm sonic}}
= V \partial_{\xi}(q^2-c^2)|_{\Gamma_{\rm sonic}}  -U\partial_{\eta} (q^2-c^2)|_{\Gamma_{\rm sonic}} \geq 0.
\end{align}
Taking derivative ${\partial}^{\perp}$ on the Bernoulli equation \eqref{2} leads to
\begin{align}\label{pperpc<0}
  \frac12 {\partial}^{\perp}(q^2-c^2) + \frac{\gamma+1}{\gamma-1}c {\partial}^{\perp} c+ {\partial}^{\perp}\varphi =0.
\end{align}
Notice that
\begin{align}\label{perpvar0}
{\partial}^{\perp}\varphi = \varphi_{\xi} V - \varphi_{\eta} U= UV - VU = 0.
\end{align}
Then \eqref{towardtosup}--\eqref{perpvar0} yield that
\begin{align}\label{perpc<00}
{\partial}^{\perp} c|_{\Gamma_{\rm sonic}} \leq 0.
\end{align}
\begin{figure}[!h]
	\centering
	\includegraphics[width=0.75\textwidth]{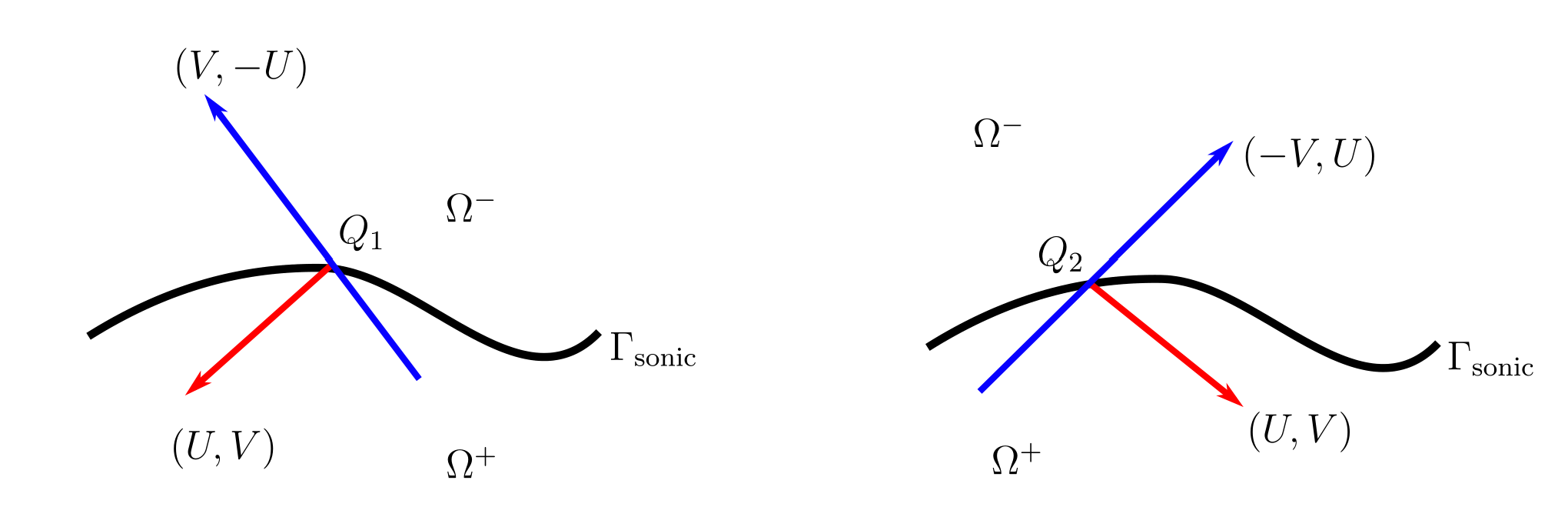}
\caption{}
\label{Q1Q2}
\end{figure}

\smallskip
\emph{Case II}: Vector $(-V, U)$ points to the supersonic region (see the right-hand side in Fig. \ref{Q1Q2}).
It follows from a similar argument as in \eqref{towardtosup} that
\begin{align*}
-\partial^{\perp} (q^2-c^2)_{|\Gamma_{\rm sonic}}= - V \partial_{\xi}(q^2-c^2)|_{\Gamma_{\rm sonic}}  + U\partial_{\eta} (q^2-c^2)|_{\Gamma_{\rm sonic}} \geq 0,
\end{align*}
where  $\partial^{\perp}$ is the same as in Case I so that ``$-$'' appears as above.
By a similar calculation as for \eqref{pperpc<0}--\eqref{perpc<00}, we have
\begin{align}\label{caseII}
{\partial}^{\perp} c|_{\Gamma_{\rm sonic}} \geq 0.
\end{align}

Without loss of generality, we consider only \textit{Case I} in the remainder of the proof, since \textit{Case II} can be treated similarly.
Let
\begin{equation}\label{4.7x}
(U,V)= q(\cos\sigma, \sin\sigma).
\end{equation}
First, we have the following lemma at the non-exceptional sonic point:
\begin{lem}\label{lemperpc}
If $(u,v,c)\in C^1(\overline{\Omega^-})$ and $f\in C^{1,1}(\mathbb{R})$, then, at the non-exceptional sonic point,
 \begin{align}\label{perpcsonic}
   {\partial}^{\perp}c= \frac{(\gamma-1)c}{2} \frac{f'\cos\sigma - \sin\sigma + c\sigma'}{\cos\sigma + f'\sin\sigma},
 \end{align}
 where $\partial^{\perp}= V \partial_{\xi} -U \partial_{\eta}$ and $\sigma' = \sigma_{\xi} + f' \sigma_{\eta}$.
\end{lem}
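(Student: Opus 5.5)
The plan is to prove \eqref{perpcsonic} pointwise at a fixed non-exceptional sonic point $Q$. I will work in the orthonormal frame at $Q$ formed by the velocity direction and its perpendicular,
\[
\mathbf e_v=(\cos\sigma,\sin\sigma),\qquad \mathbf e_\perp=(\sin\sigma,-\cos\sigma),
\]
so that $\partial^{\perp}=c\,\partial_{\mathbf e_\perp}$ at $Q$, since $q=c$ there. Three relations are needed: the degenerate equation \eqref{1}, the Bernoulli law \eqref{2} differentiated in the direction $\mathbf e_\perp$, and the sonic identity $(U,V)=c(\cos\sigma,\sin\sigma)$ differentiated along $\Gamma_{\rm sonic}$. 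All derivatives are one-sided limits from $\overline{\Omega^-}$, which the hypothesis $(u,v,c)\in C^1(\overline{\Omega^-})$ permits. Since $f\in C^{1,1}$, $\sigma$ is differentiable a.e.\ along the curve, with $\sigma'=\sigma_\xi+f'\sigma_\eta$ understood in that sense (or classically wherever it exists).

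\textbf{Step 1: velocity gradient in the frame.} Let $w_1=(u,v)\cdot\mathbf e_v(Q)$ and $w_2=(u,v)\cdot\mathbf e_\perp(Q)$, with the frame frozen at $Q$. By \eqref{1.1} the Jacobian of $(w_1,w_2)$ in the coordinates $(\mathbf e_v,\mathbf e_\perp)$ is symmetric. Write
\[
p=\partial_{\mathbf e_v}w_1,\qquad s=\partial_{\mathbf e_\perp}w_1=\partial_{\mathbf e_v}w_2,\qquad t=\partial_{\mathbf e_\perp}w_2 .
\]
At a sonic point $c^2-U^2=V^2$ and $c^2-V^2=U^2$, so \eqref{1} reduces to \eqref{UVS}. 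After dividing by $c^2$ this is exactly $\sin^2\sigma\,u_\xi-\sin\sigma\cos\sigma(u_\eta+v_\xi)+\cos^2\sigma\, v_\eta=0$, which says $t=0$.

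\textbf{Step 2: Bernoulli in the perpendicular direction.} From \eqref{Berxi=}--\eqref{Bereta=}, the combination in the direction $\mathbf e_\perp$ gives
\[
U\partial_{\mathbf e_\perp}u+V\partial_{\mathbf e_\perp}v+\frac{2c}{\gamma-1}\partial_{\mathbf e_\perp}c=0 .
\]
Since $(U,V)=c\,\mathbf e_v$ at $Q$, this becomes $cs+\frac{2c}{\gamma-1}\partial_{\mathbf e_\perp}c=0$, i.e.\ $\partial^{\perp}c=-\frac{(\gamma-1)c}{2}\,s$.

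\textbf{Step 3: tangential derivative.} Decompose $(1,f')=a\,\mathbf e_v+b\,\mathbf e_\perp$ with
\[
a=\cos\sigma+f'\sin\sigma,\qquad b=\sin\sigma-f'\cos\sigma .
\]
Here $a\neq0$ is precisely the non-exceptional condition \eqref{3}. Then $\frac{\dif}{\dif\xi}w_2=a s+bt=as$. On the other hand, along $\Gamma_{\rm sonic}$ we have $(u,v)=c(\cos\sigma,\sin\sigma)+(\xi,f(\xi))$. Differentiating in $\xi$ and dotting with $\mathbf e_\perp(Q)$ kills the $c'$ term, leaving
\[
\frac{\dif}{\dif\xi}w_2=-c\sigma'+b .
\]
Hence $s=(b-c\sigma')/a$. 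Substituting into Step 2 gives
\[
\partial^{\perp}c=\frac{(\gamma-1)c}{2}\,\frac{f'\cos\sigma-\sin\sigma+c\sigma'}{\cos\sigma+f'\sin\sigma},
\]
which is \eqref{perpcsonic}.

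\textbf{Main obstacle.} The algebra is short. The delicate point is justifying Step 3: the tangential derivative of the one-sided gradient traces must coincide with the tangential component of the one-sided gradient. This holds because $u,v\in C^1(\overline{\Omega^-})$ and the curve is $C^1$, so the chain rule applies to the restriction to $\Gamma_{\rm sonic}$. Similarly, $\sigma$ is well-defined and differentiable along the curve because $c>0$ there; this follows from $a\neq0$ and Lemma \ref{arc}(i) unless $c(Q)=0$, in which case both sides of \eqref{perpcsonic} vanish trivially.
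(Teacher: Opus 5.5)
Your proof is correct and is essentially the paper's argument written in a frozen orthonormal frame at $Q$. Your $t=0$ is the paper's identity $V\sigma_\xi-U\sigma_\eta=1$. Since $s=-c\,\partial_{\mathbf{e}_v}\sigma$ at $Q$, your Step 2 is the paper's formula $\partial^{\perp}c=\frac{\gamma-1}{2c}q^2(U\sigma_\xi+V\sigma_\eta)$. Step 3 is the same use of the tangential derivative, with $\cos\sigma+f'\sin\sigma\neq0$ used to invert the system.

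Your version has the minor advantage of not needing $U\neq0$, which the paper needs for $\tan\sigma=V/U$. Two small remarks. First, $\sigma'$ exists classically because $\sigma\in C^1$ up to $\Gamma_{\rm sonic}$ near $Q$, so no a.e.\ argument from $f\in C^{1,1}$ is needed. Second, the case $c(Q)=0$ cannot occur at a non-exceptional point, since $\sigma$ must be defined there.
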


\begin{proof}
We first notice that $U\ne 0 $ since \eqref{soniccurve2} holds locally near the sonic point under our consideration,
and the point is non-exceptional.
Then
  \begin{align}\label{tansigma==}
    \tan\sigma = \frac{V}{U}.
  \end{align}
Differentiating \eqref{tansigma==} with respect to $\xi$ and $\eta$, respectively, we obtain
\begin{align}\label{sigmaxieta}
\sigma_{\xi} = \frac{UV_{\xi} - VU_{\xi}}{q^2}, \qquad  \sigma_{\eta} = \frac{UV_{\eta} - VU_{\eta}}{q^2}.
\end{align}
By \eqref{1.1} and \eqref{sigmaxieta}, we have
\begin{align}\label{sigmaUV}
V\sigma_{\xi} - U\sigma_{\eta} = \frac{1}{q^2}( 2UV u_{\eta} - V^2u_{\xi} - U^2 v_{\eta} + q^2).
\end{align}
Applying \eqref{1.1} and the sonic condition $q^2 = c^2$, it follows from \eqref{1} that,
at any sonic point,
\begin{align}\label{eqonsonic}
V^2u_{\xi} - 2UVu_{\eta} +U^2v_{\eta} =0.
\end{align}
  Substituting \eqref{eqonsonic} into \eqref{sigmaUV} yields
  \begin{align}\label{sigmaUV=1}
    V\sigma_{\xi} - U\sigma_{\eta} = 1.
  \end{align}
  Combining \eqref{sigmaUV=1} with the fact that $\sigma_{\xi} + f'(\xi)\sigma_{\eta} = \sigma'(\xi,f(\xi))$ leads to
\begin{align}\label{sigmaxieta==}
  \sigma_{\xi}(\xi,f(\xi)) = \frac{f' + U\sigma' }{U + f' V},\qquad
  \sigma_{\eta}(\xi,f(\xi)) =\frac{ V\sigma' -1}{U + f' V}.
\end{align}
Applying the derivative  ${\partial}^{\perp}$ to the Bernoulli equation \eqref{2}, using \eqref{1.1},
and invoking \eqref{sigmaxieta}, we have
\begin{align}\label{perpc==}
  {\partial}^{\perp} c &= -\frac{\gamma-1}{2c}(U{\partial}^{\perp}U + V{\partial}^{\perp}V )\notag\\
  &= -\frac{\gamma-1}{2c} (UVU_{\xi} - U^2 U_{\eta} + V^2 V_{\xi} - UVV_{\eta}) \notag\\
  &= \frac{\gamma-1}{2c} q^2 (U \sigma_{\xi} + V\sigma_{\eta}).
\end{align}
Substituting \eqref{sigmaxieta==} into \eqref{perpc==}, we obtain \eqref{perpcsonic}.
\end{proof}

We now analyze the case that $(U,V)$ is a tangential vector of $\Gamma_{\rm sonic}$, \emph{i.e.}, $(f' U - V)|_{\Gamma_{\rm sonic}} = 0$.
First, we have the following lemma for the case that ${\partial}^{\perp}c=0$ on $\Gamma_{\rm sonic}$:

\begin{lem}\label{tangen000}
If $(u,v,c)\in C^1(\overline{\Omega^-}), f\in C^{1,1}(\mathbb{R})$, and
  \begin{align}\label{CASE3}
   (f'\, U - V)|_{\Gamma_{\rm sonic}} = 0,
  \end{align}
then ${\partial}^{\perp}c|_{\Gamma_{\rm sonic}} =0$
if and only if
$\Gamma_{\rm sonic}$ lies on a straight line.
\end{lem}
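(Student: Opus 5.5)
The plan is to read off the result from the formula \eqref{perpcsonic} of Lemma \ref{lemperpc}, specialized to the tangential case \eqref{CASE3}. Throughout I will assume $c>0$ on $\Gamma_{\rm sonic}$, so that $q=c>0$ and the angle $\sigma$ in \eqref{4.7x} is well defined. If $c$ vanished at some point, then $U=V=0$ there, \eqref{CASE3} would hold trivially, and the point would be exceptional rather than tangential; I regard this degenerate situation as excluded by the hypothesis that the pseudo-velocity is tangential.

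First I check that every point of $\Gamma_{\rm sonic}$ is non-exceptional and that $U\neq 0$ there.
\begin{itemize}
\item If $U=0$ at some point, then \eqref{CASE3} forces $V=0$, contradicting $q=c>0$. Hence $U\neq0$.
\item Using \eqref{CASE3}, $U+f'V=U(1+(f')^2)\neq0$, so by Definition \ref{defexcep} no point is exceptional.
\end{itemize}
Consequently Lemma \ref{lemperpc} applies at every point of $\Gamma_{\rm sonic}$.

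Next I simplify \eqref{perpcsonic}. Since $V=f'U$ and $U\neq0$, \eqref{tansigma==} gives $\tan\sigma=f'$. Hence $f'\cos\sigma-\sin\sigma=0$, and the denominator equals $\cos\sigma\,(1+(f')^2)\neq 0$ because $\cos\sigma\neq0$. Formula \eqref{perpcsonic} therefore reduces to
\begin{equation*}
\partial^{\perp}c=\frac{(\gamma-1)c^2\,\sigma'}{2\cos\sigma\,\big(1+(f')^2\big)}
\qquad\mbox{on }\Gamma_{\rm sonic}.
\end{equation*}
With $c>0$, this shows that $\partial^{\perp}c\equiv0$ on $\Gamma_{\rm sonic}$ if and only if $\sigma'\equiv0$ along the curve.

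Now I relate $\sigma'$ to the geometry of $f$. Since $(u,v,c)\in C^1(\overline{\Omega^-})$, the function $\sigma$ is $C^1$ up to $\Gamma_{\rm sonic}$. Hence $g(\xi):=\sigma(\xi,f(\xi))$ is $C^1$, and $g'=\sigma_\xi+f'\sigma_\eta=\sigma'$ by the chain rule. Locally $\sigma\in(-\frac{\pi}{2},\frac{\pi}{2})$ modulo $\pi$, so $f'=\tan g$ is in fact $C^1$ and $\sigma'=f''/(1+(f')^2)$.
\begin{itemize}
\item If $\sigma'\equiv0$, then $f''\equiv0$, so $\Gamma_{\rm sonic}$ is a straight segment in each coordinate patch \eqref{soniccurve2}.
\item Conversely, if $\Gamma_{\rm sonic}$ lies on a line, then $f'$ is constant in every patch, so $\sigma'=0$ and hence $\partial^\perp c=0$.
\end{itemize}

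Finally, to pass from local to global I use connectedness, as in Step 3 of Lemma \ref{arc}. Overlapping patches that are each contained in a line must lie on the same line, so the whole connected curve lies on one straight line.

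The main obstacle is justifying the differentiation along the curve, since $f$ is only assumed to be $C^{1,1}$. This is handled by the regularity bootstrap in the third step: $\tan\sigma=f'$ with $\sigma\in C^1$ forces $f\in C^2$, and no extra assumption on $f$ is needed.
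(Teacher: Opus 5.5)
Your proposal is correct and follows the paper's proof. Both specialize \eqref{perpcsonic} to the tangential case, where $f'\cos\sigma-\sin\sigma=0$, so that $\partial^{\perp}c$ becomes a nonvanishing multiple of $c^{2}\sigma'$, and then use $f''=\sigma'\sec^{2}\sigma$ in both directions. Your explicit treatment of $c>0$ and $U\neq0$, of the $C^{2}$ regularity of $f$ via $f'=\tan\sigma$, and of the local-to-global step via connectedness fills in details that the paper leaves implicit.
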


\begin{proof}
Condition $(f'\, U - V)|_{\Gamma_{\rm sonic}}= 0$ yields that
 $(U + f'\,V)|_{\Gamma_{\rm sonic}} \neq 0$. Thus the point is non-exceptional.
Then, if ${\partial}^{\perp}c|_{\Gamma_{\rm sonic}} =0$, applying \eqref{perpcsonic} yields
 \begin{align*}
  (f'\cos\sigma - \sin\sigma + c\,\sigma')|_{\Gamma_{\rm sonic}} =0.
 \end{align*}
Combining this with assumption \eqref{CASE3}, we conclude that $\sigma' =0$.
This implies
 \begin{align*}
   f'' = \sigma' \sec^2\sigma =0,
 \end{align*}
where the first equality follows from \eqref{CASE3}.
Thus, the sonic curve lies on a straight line.

On the other hand, if $\Gamma_{\rm sonic}$ lies on a straight line and $(U,V)$ is the tangential vector of the sonic curve, then $\sigma'=0$
on $\Gamma_{\rm sonic}$.
Thus, it follows from \eqref{perpcsonic} and \eqref{CASE3}
that ${\partial}^{\perp}c|_{\Gamma_{\rm sonic}} =0$.
\end{proof}

However, when $\Gamma_{\rm sonic}$ lies on a straight line, $(U,V)$ is not necessarily tangential to the sonic curve.
Without loss of generality, we may assume that the sonic curve is contained in the straight line $\{\eta=0\}$.
We can show that there exists a transonic solution $(u,v,c)$ near the sonic line such that $(U,V)$ is not tangent to the sonic line $\{\eta=0\}$.
 Let
\begin{align}\label{ssonicraceq}
  \Gamma_{\rm sonic}^{\rm str} \defs \big\{(\xi,\eta)\,: \,\eta= 0,\, \xi\in (\xi_1 ,\xi_2)\big\},
\end{align}
where $\xi_1$ and $\xi_2$ are constants.
For any point $(\xi_0,0)\in \Gamma_{\rm sonic}^{\rm str}$, let
\begin{align}\label{smathDeq}
  \mathcal{D}_{\varepsilon_0} \defs \big\{ (\xi,\eta)\,:\,|\eta|< \varepsilon_0,\, \xi\in (\xi_1 ,\xi_2)  \big\}.
\end{align}

Then we have the following lemma:
\begin{lem}\label{counterexf''=0}
There exists a solution $(u,v,c)$ of equations \eqref{1}--\eqref{2} in domain $\mathcal{D}_{\varepsilon_0}$, satisfying
\begin{align}\label{eta=0s}
    u = (1-\frac14 \mu^2) \xi  + \frac12 C_0,\,\,\,\,
     v = \frac{\sqrt{3}}{4}\mu^2 \xi  -\frac{\sqrt{3}}{2} C_0,\,\,\,\, c = -\frac12 \mu^2 \xi  + C_0\qquad\,\, \text{on $\Gamma_{\rm sonic}^{\rm str}$},
\end{align}
where constant $C_0>0$ is chosen such that $c>0$ and $v<0$,
and the small constant $\varepsilon_0 >0$ depends only on $K$, $\gamma$, and $C_0$.
The solution is supersonic when $\eta>0$ and subsonic when $\eta<0$.
In addition, $(U,V)|_{\Gamma_{\rm sonic}^{\rm str}}$ is not the tangential vector of the sonic line $\Gamma_{\rm sonic}^{\rm str}$.
 \end{lem}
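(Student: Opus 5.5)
The plan is to follow the proof of Lemma \ref{sonicarclem}, with the straight line $\{\eta=0\}$ in place of the circle. There are three tasks: verify the algebraic properties of the data \eqref{eta=0s} on $\Gamma_{\rm sonic}^{\rm str}$, determine the normal derivatives and show they give the transonic sign, and construct the solution by the Cauchy--Kovalevskaya theorem.

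\emph{Step 1 (properties of the data).} On $\eta=0$ we have $U=u-\xi=-\frac14\mu^2\xi+\frac12C_0=\frac12 c$ and $V=v=\frac{\sqrt3}{4}\mu^2\xi-\frac{\sqrt3}{2}C_0=-\frac{\sqrt3}{2}c$. Hence $U^2+V^2=c^2$, so the line is sonic. Since $c>0$ gives $V\neq0$, the vector $(U,V)=c(\frac12,-\frac{\sqrt3}{2})$ is not tangential to the line. For the compatibility of the Bernoulli law \eqref{2} along the line, it suffices, as in \eqref{com:Ber}, to check $\frac{\gamma+1}{\gamma-1}c\,c_\xi+\varphi_\xi=0$, because $q=c$ there. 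This reads $\frac{\gamma+1}{\gamma-1}c(-\frac12\mu^2)+\frac12c=0$, which holds since $\mu^2=\frac{\gamma-1}{\gamma+1}$. We then fix $\varphi$ on the line by \eqref{sonicc}, $\varphi=K-\frac{\gamma+1}{2(\gamma-1)}c^2$, and take $\varphi_\eta=V$.

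\emph{Step 2 (normal derivatives and transonic character).} The tangential derivatives are $u_\xi=1-\frac14\mu^2$ and $v_\xi=\frac{\sqrt3}{4}\mu^2$. Equation \eqref{1.1} gives $u_\eta=v_\xi=\frac{\sqrt3}{4}\mu^2$, and equation \eqref{1}, whose $v_\eta$-coefficient is $c^2-V^2=\frac14c^2\neq0$, yields $v_\eta=-3-\frac34\mu^2$. Thus $U_\eta=\frac{\sqrt3}{4}\mu^2$ and $V_\eta=-4-\frac34\mu^2$. Differentiating \eqref{2} in $\eta$ gives $\frac{2}{\gamma-1}cc_\eta+UU_\eta+VV_\eta+V=0$. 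With $\mathcal F=q^2-c^2$ this gives
\[
\mathcal F_\eta=(\gamma+1)(UU_\eta+VV_\eta)+(\gamma-1)V=2\sqrt3(\gamma+1)c>0 ,
\]
so the solution is supersonic for small $\eta>0$ and subsonic for small $\eta<0$, exactly as in \eqref{1entropyARC}--\eqref{entropyARC}. I will recheck this sign carefully, since the claimed orientation depends on it. Note that the coefficient $c^2-V^2$ being nonzero is precisely the non-characteristic condition for the line, the analogue of \eqref{noncharaeq}.

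\emph{Step 3 (existence).} I will write the second-order equation for $\varphi$ in the $(\xi,\eta)$-coordinates; its coefficients $c^2-\varphi_\xi^2$, $-\varphi_\xi\varphi_\eta$ and $c^2-\varphi_\eta^2$ are analytic in $(D\varphi,\varphi)$. I then subtract the analytic Cauchy data, as in \eqref{newpsi}, to get homogeneous data $\tilde\psi=\tilde\psi_\eta=0$ on $\eta=0$. Next I convert to a first-order system of the form \eqref{foeq}, solving for $\tilde\psi_{\eta\eta}$, which is possible because $c^2-\varphi_\eta^2=\frac14c^2>0$ near the line. The Cauchy--Kovalevskaya theorem then gives an analytic solution near each point of $[\xi_1,\xi_2]$ (the data can be extended slightly beyond the endpoints), and compactness gives a uniform strip of width $\varepsilon_0$, as in Lemma \ref{sonicarclem}.

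I expect the main obstacle to be the sign computation in Step 2, which requires correctly combining \eqref{1} and \eqref{2}. The remaining steps are routine adaptations of the proof of Lemma \ref{sonicarclem}; $C_0$ is chosen large enough relative to $\xi_2$ that $c>0$ on the whole interval, and then $v<0$ follows automatically from $v=V=-\frac{\sqrt3}{2}c$.
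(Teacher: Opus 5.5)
Your proposal is correct and follows essentially the same route as the paper. You verify $U=\frac12 c$ and $V=-\frac{\sqrt3}{2}c$ on the line, use the non-characteristic condition $c^2-V^2=U^2\neq 0$ to invoke the Cauchy--Kovalevskaya construction of Lemma \ref{sonicarclem}, and obtain the same normal derivatives and the same value $\partial_\eta(q^2-c^2)=2\sqrt3(\gamma+1)c>0$. The only differences are cosmetic: you eliminate $c_\eta$ through the differentiated Bernoulli law rather than computing it explicitly, and you spell out the Bernoulli compatibility check along $\{\eta=0\}$, which the paper leaves implicit by referring to Lemma \ref{sonicarclem}.
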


\begin{proof}
Employing \eqref{4.7x} and \eqref{eta=0s}, direct calculations yield that, on the sonic line $\Gamma_{\rm sonic}^{\rm str}$,
\begin{align}
U=&  -\frac14 \mu^2 \xi  + \frac12 C_0,\quad
V= \frac{\sqrt{3}}{4}\mu^2 \xi  -\frac{\sqrt{3}}{2} C_0, \quad \cos\sigma = \frac12, \quad  \sin\sigma = -\frac{\sqrt{3}}{2}, \label{cossinxi0}\\
    u_{\xi} =& 1-\frac14 \mu^2,\quad
     v_{\xi}= \frac{\sqrt{3}}{4}\mu^2,\quad  c_{\xi} = - \frac12 \mu^2.\label{cuvxi}
\end{align}
Then $U|_{\Gamma_{\rm sonic}^{\rm str}}=\frac12c|_{\Gamma_{\rm sonic}}>0$, which implies that
 $\Gamma_{\rm sonic}^{\rm str}$ is a non-characteristic boundary.

Applying the Cauchy-Kovalevskaya theorem,
we can obtain the existence of solution $(u,v,c)$ with data \eqref{eta=0s} in $\mathcal{D}_{\varepsilon_0}$,
where $\varepsilon_0$ depends only on $K$, $\gamma$, and $C_0$.
We omit the details of the proof here, since it is similar to the proof of Lemma \ref{sonicarclem}.
Here, similar to Lemma \ref{sonicarclem}, the detail proof is the one-line compactness argument (assuming that both $\xi_1$ and $\xi_2$ are finite);
see the last paragraph in the proof of Lemma \ref{sonicarclem}.

Based on the assumptions
and \eqref{cossinxi0}, it is direct to check that $(U,V)|_{\Gamma_{\rm sonic}^{\rm str}}$ is not the tangential vector of the sonic line $\Gamma_{\rm sonic}^{\rm str}$ and
\begin{align*}
  U^2 + V^2 =c^2 \qquad \text{on $\Gamma_{\rm sonic}^{\rm str}$}.
\end{align*}
Moreover, if $(u,v,c)$ is a solution of equations \eqref{1}--\eqref{2},
then, on $\Gamma_{\rm sonic}^{\rm str}$,
  \begin{align}
     &u_{\xi}\sin^2\sigma  - 2v_{\xi}\sin\sigma \cos\sigma + v_{\eta}  \cos^2\sigma =0,\label{rconter1}\\
     & u_{\eta}=v_{\xi},\label{irrrotauv}\\
    &u_{\eta}\cos\sigma + v_{\eta}\sin\sigma + \frac{2}{\gamma-1} c_{\eta} =0.\label{rconter3}
  \end{align}
Thus, applying \eqref{cossinxi0}--\eqref{rconter3}, we have
\begin{align}\label{rconterceta}
  v_{\eta} = -\frac34 \mu^2 - 3,\,\,\, u_{\eta} = v_{\xi} = \frac{\sqrt{3}}{4}\mu^2,\,\,\,
  c_{\eta}= -\frac{\sqrt{3}}{2}\mu^2 (1+2\gamma) \qquad\,\, \text{on $\Gamma_{\rm sonic}^{\rm str}$}.
  \end{align}
Let
  \begin{align*}
    \mathcal{G}(\xi, \eta) \defs U^2 + V^2 -c^2.
  \end{align*}
Applying \eqref{cossinxi0} and \eqref{rconterceta}, direct calculations yield that
  \begin{align*}
    \mathcal{G}_{\eta}(\xi, 0)
    = 2c( \cos\sigma\, U_{\eta} + \sin\sigma\, V_{\eta} -  c_{\eta})(\xi, 0)
    = 2\sqrt{3}(\gamma+1) c
    > 0.
  \end{align*}
  Then
  \begin{align}
    \mathcal{G}(\xi, 0) &=0 \qquad \text{for $\eta=0\,\,$ (\mbox{sonic when $\eta=0$})},\label{1mathcalGE}\\
    \mathcal{G}(\xi,\eta)&>0 \qquad \text{for $\eta>0\,\,$ (\mbox{supersonic when $\eta>0$})},\\
     \mathcal{G}(\xi, \eta)&< 0 \qquad \text{for $\eta<0\,\,$ (\mbox{subsonic when $\eta<0$})}.\label{mathcalGE}
  \end{align}
Therefore, the solution is supersonic when $\eta>0$, subsonic when $\eta<0$, and sonic when $\eta=0$.
\end{proof}

Theorem \ref{theorem4} follows clearly from Lemmas \ref{tangen000}--\ref{counterexf''=0}.

\medskip
Next, we consider the non-exceptional sonic points with $ {\partial}^{\perp} c|_{\Gamma_{\rm sonic}} <0$
and $(U, V)$ being the tangential vector to the sonic curve to show Theorem \ref{theorem3}.
In fact, we have the following lemma about the convexity of the sonic curve:

\begin{lem}\label{con}
 If $(u,v,c)\in C^1(\overline{\Omega^-}), f\in C^{1,1}(\mathbb{R})$,
 \begin{align}\label{CASE4}
 (f'\, U - V)|_{\Gamma_{\rm sonic}}
 = 0,\qquad {\partial}^{\perp}c|_{\Gamma_{\rm sonic}} < 0,
 \end{align}
then $\sin\sigma$ is strictly decreasing along $\Gamma_{\rm sonic}$
in the tangential direction $(1,f')$.
In addition, the sonic curve is convex when viewed from the supersonic domain $\Omega^-$;
that is, for every point $Q\in \Gamma_{\rm sonic}$, there exists $r>0$ such that $\Omega^- \cap B_r(Q)$ is convex.
\end{lem}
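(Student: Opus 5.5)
The plan is to specialize the formula of Lemma \ref{lemperpc} to the tangential case and read off the sign of $\sigma'$. Then I will translate that sign into the sign of $f''$ and, finally, into local convexity of $\Omega^-$, with the orientation fixed by Case I.

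\textbf{Step 1: $c>0$ and simplification of \eqref{perpcsonic}.} First, $c>0$ on $\Gamma_{\rm sonic}$. If $c(Q)=0$, then $U=V=0$ at $Q$, so $\partial^{\perp}c(Q)=0$, contradicting \eqref{CASE4}. Hence $q=c>0$, the angle $\sigma$ in \eqref{4.7x} is well defined, and $\sigma$ is $C^1$ along $\Gamma_{\rm sonic}$ because $(u,v)\in C^1(\overline{\Omega^-})$. As in the proof of Lemma \ref{tangen000}, the condition $f'U=V$ makes every point non-exceptional. It also gives $U\neq 0$, so $\cos\sigma\ne0$ and $f'=\tan\sigma$. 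Substituting into \eqref{perpcsonic}:
\begin{itemize}
\item the numerator becomes $f'\cos\sigma-\sin\sigma+c\sigma'=c\sigma'$;
\item the denominator becomes $\cos\sigma+\tan\sigma\sin\sigma=\sec\sigma$.
\end{itemize}
Therefore
\begin{equation*}
\partial^{\perp}c=\frac{(\gamma-1)c^2}{2}\,\sigma'\cos\sigma=\frac{(\gamma-1)c^2}{2}\,\frac{\dif}{\dif\xi}\big(\sin\sigma(\xi,f(\xi))\big).
\end{equation*}
Since $\partial^{\perp}c<0$ and $c>0$, the derivative of $\sin\sigma$ in the direction $(1,f')$ is strictly negative. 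This proves the first claim.

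\textbf{Step 2: curvature of $\Gamma_{\rm sonic}$.} Since $f'=\tan\sigma$ with $\sigma$ a $C^1$ function of $\xi$ along the curve, $f'$ is in fact $C^1$, and
\begin{equation*}
f''=\sigma'\sec^2\sigma .
\end{equation*}
By Step 1, $\sigma'\cos\sigma<0$, so $f''$ has the sign of $-\cos\sigma$, hence of $-U$, and never vanishes. By continuity this sign is constant near each $Q$.

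\textbf{Step 3: orientation.} In Case I, the vector $(V,-U)=c(\sin\sigma,-\cos\sigma)$ points into $\Omega^-$, and its $\eta$-component is $-c\cos\sigma$.
\begin{itemize}
\item If $\cos\sigma(Q)>0$, then $f''<0$ near $Q$ and $\Omega^-$ lies locally below the graph, i.e.\ $\Omega^-\cap B_r(Q)=\{\eta<f(\xi)\}\cap B_r(Q)$. This is the hypograph of a strictly concave function, which is locally convex.
\item If $\cos\sigma(Q)<0$, then $f''>0$ and $\Omega^-$ lies locally above the graph. This is the epigraph of a strictly convex function, again locally convex.
\end{itemize}
In both subcases $\Omega^-\cap B_r(Q)$ is convex for $r$ small enough that the graph representation \eqref{soniccurve2} holds and $f''$ keeps its sign. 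Strictness comes from $f''\neq0$.

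I expect the main obstacle to be this orientation bookkeeping: checking that the side into which $(V,-U)$ points is exactly the side on which the graph bends inward, uniformly in the sign of $\cos\sigma$. A secondary point is justifying that $\sigma'$ is a genuine derivative, so that $f''=\sigma'\sec^2\sigma$ holds pointwise rather than only a.e. This follows from the $C^1$ regularity of $(U,V)$ up to $\Gamma_{\rm sonic}$ from the supersonic side.
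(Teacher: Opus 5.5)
Your proposal is correct and follows essentially the same route as the paper: you specialize \eqref{perpcsonic} with $f'=\tan\sigma$ to get $\partial^{\perp}c=\frac{\gamma-1}{2}c^2(\sin\sigma)'$ and $f''\cos\sigma<0$, and then read off convexity from the Case I orientation. Your explicit checks ($c>0$, $U\neq0$, $f\in C^2$, and the sign of the $\eta$-component of $(V,-U)$) make rigorous what the paper leaves to Fig.~\ref{sonicomega}; note also that Case I is in fact forced by the hypothesis $\partial^{\perp}c<0$, since Case II would give $\partial^{\perp}c\ge0$ by \eqref{caseII}.
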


\begin{proof}
It follows from \eqref{CASE4} that $(U, V)$ is the tangential vector to the sonic curve at each point.
Then we choose the $(\xi,\eta)$--coordinates so that the sonic curve given by \eqref{soniccurve2} satisfies
\begin{align}\label{tansigman}
f' =  \tan\sigma \qquad \text{on $\Gamma_{\rm sonic}$}.
\end{align}
Then, by \eqref{perpcsonic} and \eqref{CASE4}--\eqref{tansigman}, we have
\begin{align}\label{r+-perp}
0> {\partial}^{\perp} c(\xi,f(\xi))  =&\frac{(\gamma-1)c^2\sigma'}{2(\cos\sigma+ f'\sin\sigma)}=\frac{\gamma-1}{2} c^2\sigma'\cos\sigma
   = \frac{\gamma-1}{2} c^2(\sin\sigma)'.
\end{align}
Thus, $(\sin\sigma)'< 0$. This implies
\begin{equation}\label{4.31x}
\mbox{$\sin\sigma$ is strictly decreasing along $\Gamma_{\rm sonic}$
in the tangential direction $(1,f')$.}
\end{equation}

Next, \eqref{perpcsonic} and \eqref{CASE4}--\eqref{tansigman} also yield
\begin{align*}
0>{\partial}^{\perp} c(\xi,f(\xi)) =\frac{\gamma-1}{2} \frac{c^2 f''\cos\sigma}{1+(f')^2}.
\end{align*}
Then $f''\cos\sigma < 0$.
In addition, by \eqref{tansigman} and smoothness of $f$, $\cos\sigma$ cannot change sign within a coordinate neighborhood. This and the fact that $f'' \cos \sigma <0$ show that the sign of $f''$ does not change, and
\begin{align}\label{f''cos}
  \mbox{either $\,\,\cos\sigma < 0$ and $f''>0$ $\quad\,\,$ or $\quad\,\, \cos\sigma > 0$ and $f'' < 0$}.
\end{align}

Since we consider Case I, \emph{i.e.}, vector $(V, -U)$ points to the supersonic region (see the left-hand side in Fig. \ref{Q1Q2}),
the directional derivative $\partial^{\perp} = (V \partial_{\xi}, -U \partial_{\eta})$ points
toward the supersonic domain $\Omega^-$.
Consider the first situation in \eqref{f''cos}, \emph{i.e.}, $\cos\sigma < 0$ and $f''>0$.
Since $(U,V)$ is the tangential vector of the sonic curve, $U= q\cos\sigma <0$, $V= q\sin\sigma$, and \eqref{4.31x},
we see that $(V,-U)$ points to the convex part (see the first figure in Fig. \ref{sonicomega}).
Thus, for Case I, it follows that the sonic curve is convex when viewed from the supersonic
domain $\Omega^-$.
The second case in \eqref{f''cos} can be treated similarly, as shown by the second figure in Fig. \ref{sonicomega}.
\end{proof}

\begin{figure}[!h]
	\centering
	\includegraphics[width=0.75\textwidth]{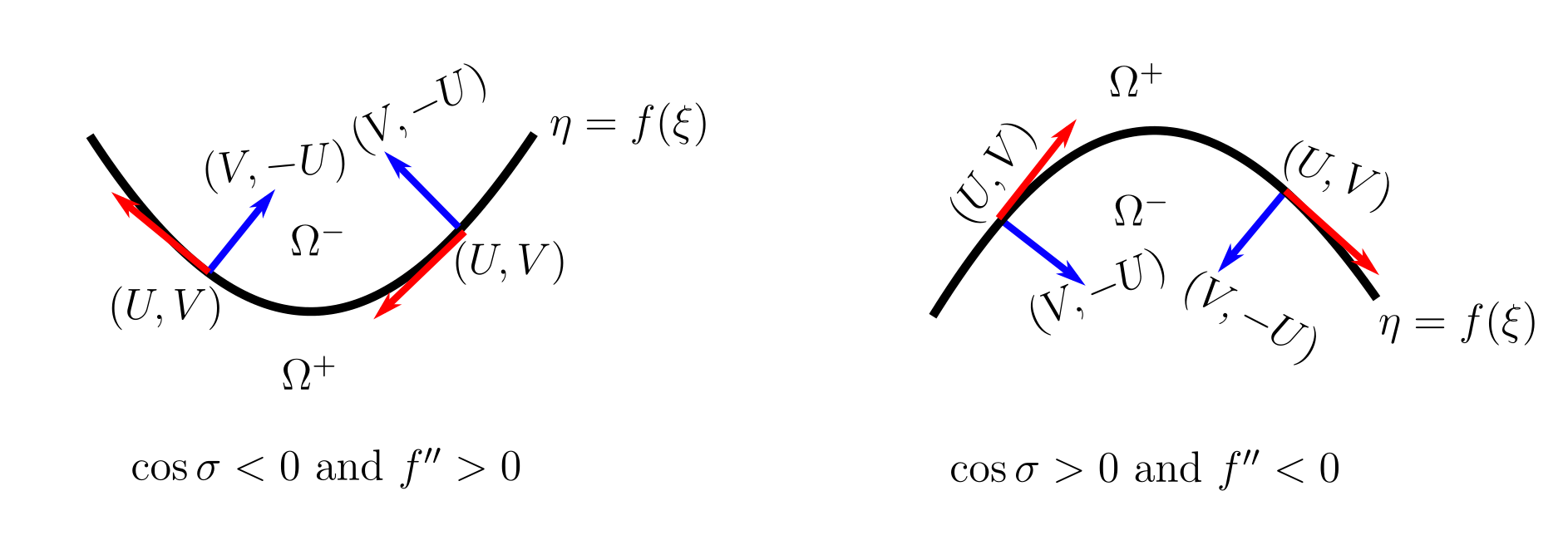}
	\caption{The convexity of the sonic curve. \label{sonicomega}}
\end{figure}

 \begin{rem}
Following a similar argument, it is direct to see that {\rm Lemmas \ref{tangen000}}--{\rm  \ref{con}} still hold for \emph{Case II} on {\rm Fig. \ref{Q1Q2}}.
 \end{rem}

\begin{rem}
In {\rm Theorem \ref{theorem3}}, our assumption is $\partial_nc\neq0$.
Recall that  $\partial^{\perp} = V\partial_{\xi} - U\partial_{\eta}$ and $(U,V)$ is the tangential vector of the sonic curve,
so the directions of the derivatives $\partial_n$ and $ \partial^{\perp} $ are parallel.
Then $0\neq\partial_nc= \partial^{\perp}c$.
\end{rem}

\section{Proof of Theorem \ref{streamtheorem} for the Structure of Streamlines near the Sonic Curves}
Based on Lemmas 3.1--3.2 and Lemmas 4.1--4.4,
the structure of the streamlines near $\Gamma_{\rm sonic}$ can also be described.

\begin{proof}[Proof of {\rm Theorem \ref{streamtheorem}}]
For each sonic point $Q$, we choose the appropriate local coordinates $(\xi, \eta)$ in ball $B_r(Q)$
such that the streamline passing
through $Q$ is given by $\Gamma_{Q}\cap B_r(Q)\defs\{\eta = F(\xi)\in C^2((b_1, b_2))\}\cap B_r(Q)$ with $Q\defs(\xi_Q, F(\xi_Q))$
for some real numbers $b_1$ and $b_2$; that is, $(U,V)$ is the tangential vector along $\Gamma_{Q}$.
Since $U(Q)\neq0$, then
\begin{align}\label{xiFxi==}
  \frac{V}{U}(\xi,F(\xi)) = \frac{\sin\sigma}{\cos\sigma}(\xi,F(\xi)) =F'(\xi) \qquad \text{on $\Gamma_Q\cap B_r(Q)$}.
\end{align}
Direct calculations yield
\begin{align}\label{F''F'}
(U\sigma_{\xi} + V\sigma_{\eta} ) (\xi,F(\xi))
   = \frac{UF''(\xi)}{1+(F'(\xi))^2} \qquad \text{on $\Gamma_Q\cap B_r(Q)$}.
\end{align}
Moreover, it follows from \eqref{perpc==} that
\begin{align}\label{str1}
U \sigma_{\xi} + V\sigma_{\eta} = \frac{2c{\partial}^{\perp} c}{(\gamma-1) q^2}.
\end{align}
Then \eqref{F''F'}--\eqref{str1} imply
\begin{align}\label{F''F'=str1}
  \frac{UF''(\xi)}{1+(F'(\xi))^2} = \frac{2c{\partial}^{\perp} c}{(\gamma-1) q^2}(\xi,F(\xi)) \qquad \text{on $\Gamma_Q\cap B_r(Q)$}.
\end{align}

We now analyze the structure of the streamlines with two cases.

\medskip
(i). ${\partial}^{\perp} c(Q)=0$. In this case, by \eqref{F''F'=str1}, we have
\begin{align}\label{pmcQ=0}
  U(\xi_Q, F(\xi_Q)) F''(\xi_Q)=0.
\end{align}
Since $F(\xi) \in C^2((b_1, b_2))$ and $U(\xi_Q, F(\xi_Q))\neq0$, then \eqref{pmcQ=0} yields that $F''(\xi_Q)=0$.

\begin{figure}[!h]
	\centering
	\includegraphics[width=0.75\textwidth]{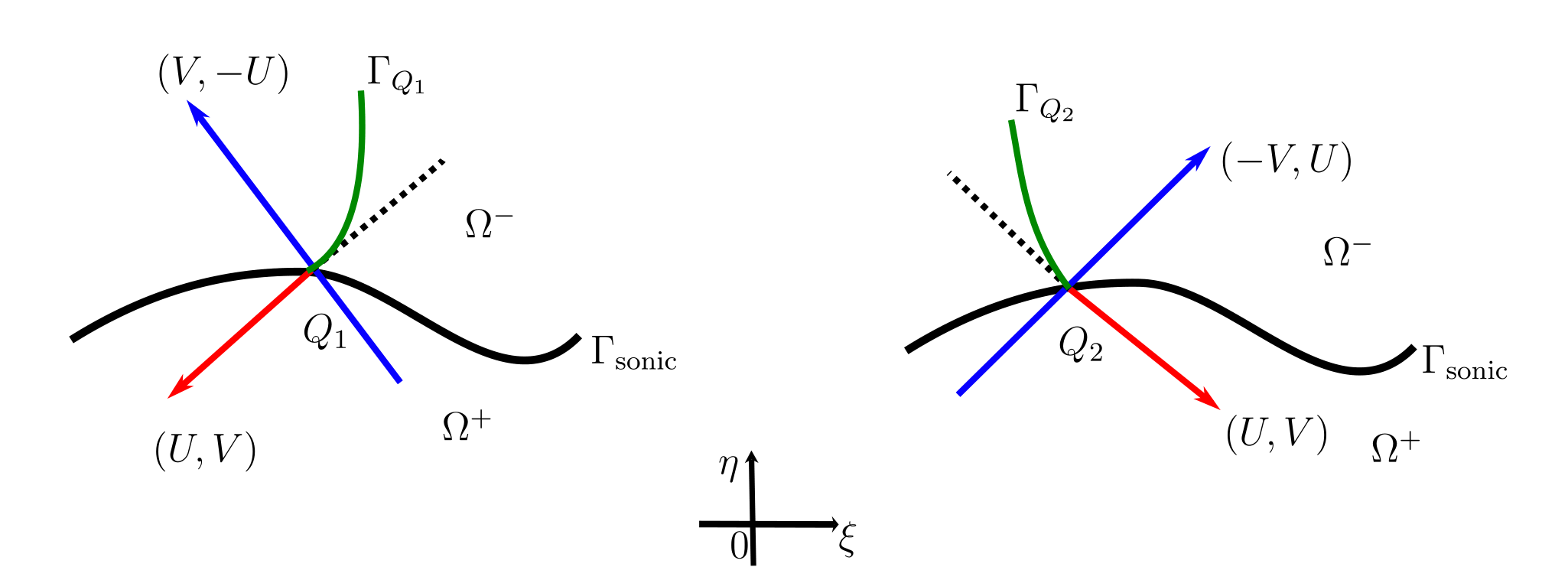}
	\caption{The structure of the streamlines near $\Gamma_{\rm sonic}$. \label{stream}}
\end{figure}

\smallskip
(ii):  ${\partial}^{\perp} c (Q)\neq0$. In this case, it follows
from Lemma \ref{arc} that $Q$ is not exceptional.
In fact, if point $Q$ is exceptional, then $\partial_\tau = \partial^\perp$, which contradicts Lemma \ref{arc}.

\smallskip
First, consider \textit{Case I}, {\it i.e.}, for a non-exceptional point $Q_1$, if
vector $(V, -U)$ points to the supersonic region $\Omega^-$ (see the left-hand side in Fig. \ref{stream}).
It follows from \eqref{perpc<00} and \eqref{F''F'=str1} that
\begin{align}\label{perpc<0}
U F'' (Q_1) < 0.
\end{align}
Therefore, in a small neighborhood of the sonic point $Q_1$, under the condition that
$\Omega^-\subset  \{ \eta> f(\xi)\}$ for the choice of coordinates $(\xi, \eta)$,
the streamline is convex as shown on the left-hand side of Fig. \ref{stream}.

Next, for \textit{Case II}, {\it i.e.}, for a non-exceptional point $Q_2$, vector $(-V, U)$ points to the supersonic region $\Omega^-$,
by a similar argument, \eqref{caseII} and \eqref{F''F'=str1} yield that
\begin{align*}
U F'' (Q_2)>0.
\end{align*}
Thus,  in a small neighborhood of the sonic point $Q_1$, under the condition that
$\Omega^-\subset  \{ \eta> f(\xi)\}$ for the choice of coordinates $(\xi, \eta)$,
the streamline $\Gamma_{Q_2}$ is convex as shown on the right-hand side of Fig. \ref{stream}.
This completes the proof of Theorem \ref{streamtheorem}.
\end{proof}

\section{Proof of Theorem \ref{theoremnew}}
To prove Theorem \ref{theoremnew}, it suffices to prove the following lemma:
\begin{lem}\label{lemntau}
Assume that \eqref{supassc22}--\eqref{soniccurve2} hold.
For any fixed sonic point $Q= (\xi_*,f(\xi_*))$,
let ${\psi}^Q$ be defined as in \eqref{psiQQIN}.
If point $Q$ is non-exceptional and ${\partial}^{\perp}c(Q)=0$, then
${\psi}^Q$ is twice differentiable at $Q$ across the sonic curve and there exist sufficiently small positive constants $\varepsilon$ and $\varepsilon_0$ such that
\begin{align}
&\psi^Q(Q + t\mathbf{n}_Q) \geq  (\frac{1}{\gamma+1} - \varepsilon_0) \big(\frac{\mathbf{n}_Q\cdot(U,V)(Q)}{c(Q)}\big)^2t^2,
\label{PsiQnQ}\\
& \psi^Q(Q + s\mbtau_Q) \geq (\frac{1}{\gamma+1} - \varepsilon_0) \big(\frac{\mbtau_Q\cdot(U,V)(Q)}{c(Q)}\big)^2 s^2,\label{PsiQtauQ}
\end{align}
where $t \in (-\varepsilon,\varepsilon)$, $s \in (-\varepsilon,\varepsilon)$,
$\mathbf{n}_Q = \frac{(f'(\xi_*),-1)}{\sqrt{1+(f'(\xi_*))^2}}$, and $\mbtau_Q = \frac{(1,f'(\xi_*))}{\sqrt{1+(f'(\xi_*))^2}}$.
\end{lem}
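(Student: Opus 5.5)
The plan is to compute the full Hessian of $\psi^Q$ at $Q$ from each side of $\Gamma_{\rm sonic}$, show that the two one-sided Hessians coincide, and identify the common Hessian explicitly as
$$
D^2\psi^Q(Q)=\tfrac{2}{\gamma+1}\,\mathbf e_v\otimes\mathbf e_v ,
$$
where $\mathbf e_v$, $\mathbf e_{v^\perp}$ are the unit vectors along $(U,V)(Q)$ and $(-V,U)(Q)$. Both \eqref{PsiQnQ} and \eqref{PsiQtauQ} then follow from a second-order Taylor expansion along the lines $Q+t\mathbf n_Q$ and $Q+s\mbtau_Q$. The starting facts are these. By \eqref{phi-2-Q} and \eqref{conti2}, $\psi^Q(Q)=0$ and $D\psi^Q(Q)=0$. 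Moreover $D^2\psi^Q=D^2\varphi^\pm+I$ on each side, and $\varphi^\pm$ is $C^2$ up to $\Gamma_{\rm sonic}$ by \eqref{supassc22}--\eqref{subassc22}.

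\textbf{Step 1 (matching of Hessians).} Since $D\varphi^+=D\varphi^-$ on $\Gamma_{\rm sonic}$, the tangential derivatives $\partial_{\mbtau}D\varphi^\pm$ agree at $Q$. This gives the equality of $\psi_{\tau\tau}$ and $\psi_{\tau n}$ from both sides. At a sonic point, $c^2=U^2+V^2$, so \eqref{1} reduces to \eqref{eqonsonic}, namely $V^2u_\xi-2UVu_\eta+U^2v_\eta=0$. In terms of $\psi^Q$ this reads $\psi^Q_{v^\perp v^\perp}(Q)=0$ on both sides, because $u_\xi=\varphi_{\xi\xi}+1$ and similarly for the other entries. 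Since $Q$ is non-exceptional, $\mathbf e_{v^\perp}$ is transversal to $\mbtau_Q$. Hence this third relation determines $\psi_{nn}$ from $\psi_{\tau\tau}$ and $\psi_{\tau n}$. Therefore the one-sided Hessians coincide, and $\psi^Q$ is twice differentiable at $Q$ across the sonic curve, with a Taylor expansion of the form $\tfrac12 D^2\psi^Q(Q)[w,w]+o(|w|^2)$ holding uniformly on both sides.

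\textbf{Step 2 (the cross term vanishes).} By \eqref{perpc==}, the assumption $\partial^\perp c(Q)=0$ gives $U\sigma_\xi+V\sigma_\eta=0$. That is, the derivative of the flow angle along $\mathbf e_v$ vanishes at $Q$. Since $\partial_{v}(D\varphi\cdot \mathbf e_{v^\perp})=q\,\partial_v\sigma$ at $Q$, this yields $\varphi_{vv^\perp}(Q)=0$, and hence $\psi^Q_{vv^\perp}(Q)=0$.

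\textbf{Step 3 (the value of $\psi_{vv}$).} Apply $\partial^\perp$ to Bernoulli as in \eqref{pperpc<0}, using $\partial^\perp\varphi=0$ and $\partial^\perp c=0$. This gives $\partial^\perp(q^2-c^2)(Q)=0$. Also $\partial_{\mbtau}(q^2-c^2)=0$ along the curve. Because the two directions are independent (again by non-exceptionality), $D(q-c)(Q)=0$, so $q_v=c_v$ at $Q$. Differentiating \eqref{2} along $\mathbf e_v$ gives
$$
\tfrac{2}{\gamma-1}c\,c_v+q\,q_v+\varphi_v=0,
$$
with $\varphi_v=q=c$. This yields $q_v=-\frac{\gamma-1}{\gamma+1}$. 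By Step 2, $q_v=\varphi_{vv}$ at $Q$, so $\psi^Q_{vv}(Q)=\frac{2}{\gamma+1}$. Combining the three steps, $\psi^Q(Q+w)=\frac1{\gamma+1}(w\cdot\mathbf e_v)^2+o(|w|^2)$. Taking $w=t\mathbf n_Q$ or $w=s\mbtau_Q$, and noting that $w\cdot\mathbf e_v=w\cdot(U,V)(Q)/c(Q)$, we obtain \eqref{PsiQnQ}--\eqref{PsiQtauQ} with a loss $\varepsilon_0$ once $\varepsilon$ is small.

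\textbf{Expected obstacle.} The delicate points are, first, making Step 2 rigorous: one must differentiate $q$ and $\sigma$ with frozen versus moving frames, and check that the extra terms vanish exactly at $Q$. Second, the $o(|w|^2)$ remainder is absorbed into $\varepsilon_0$ only when the coefficient $(\mathbf n_Q\cdot\mathbf e_v)^2$, respectively $(\mbtau_Q\cdot\mathbf e_v)^2$, is positive. For $\mbtau_Q$ this is automatic: $\mbtau_Q\cdot\mathbf e_v=0$ would force the velocity to be normal, i.e.\ $Q$ exceptional, which is excluded. For $\mathbf n_Q$ with tangential velocity, the coefficient vanishes and the lower bound must be interpreted with the $o(t^2)$ slack. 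I would treat this degenerate case separately, and I expect it to require the most care.
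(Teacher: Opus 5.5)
Your argument is correct and yields the same Hessian as the paper, $D^2\psi^Q(Q)=\frac{2}{\gamma+1}\,\mathbf{e}_v\otimes\mathbf{e}_v$, which is exactly \eqref{imporantcc}--\eqref{PsitaunQ==}. The route, however, differs from the paper's.

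The paper stays in the $(\xi,\eta)$ frame. It first solves for $c_\xi,c_\eta$ on $\Gamma_{\rm sonic}$ from the tangential derivative of the sonic Bernoulli law and the value of $\partial^{\perp}c$. It then inverts the $3\times 3$ system formed by the degenerate equation and the two differentiated Bernoulli laws, which gives $u_\xi,u_\eta,v_\eta$ on each side for arbitrary $\partial^{\perp}c$. Finally it projects onto $\mbnu_1,\mbnu_2$. The two one-sided Hessians are seen to agree only after inserting $\partial^{\perp}c(Q)=0$, when all one-sided dependence drops out.

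You work instead in the flow frame $(\mathbf{e}_v,\mathbf{e}_{v^\perp})$, where each entry comes from a single identity:
\begin{itemize}
\item the degenerate sonic equation gives $\psi^Q_{v^\perp v^\perp}=0$;
\item the identity \eqref{perpc==} gives $\psi^Q_{vv^\perp}=0$;
\item Bernoulli along $\mathbf{e}_v$ gives $\psi^Q_{vv}=\frac{2}{\gamma+1}$.
\end{itemize}

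What each approach buys:
\begin{itemize}
\item Your route is much shorter.
\item Your Step~1 proves something the paper does not state. The one-sided Hessians coincide at every non-exceptional sonic point, whether or not $\partial^{\perp}c(Q)=0$, by tangential matching plus the sonic equation with $\mathbf{e}_{v^\perp}\cdot\mathbf{n}_Q\neq 0$. In particular $\partial^{\perp}c(Q)$ has the same value from both sides, so the hypothesis is unambiguous.
\item The paper's computation gives explicit one-sided formulas \eqref{uxi==}--\eqref{Psitaun==} along the whole curve, showing how $\partial^{\perp}c$ enters.
\end{itemize}

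Three small remarks on your write-up:
\begin{itemize}
\item The frame issue you list as an obstacle in Step~2 is harmless. Since $D\varphi\cdot\mathbf{e}_{v^\perp}=q\sin(\sigma-\sigma_*)$, the extra term $q_v\sin(\sigma-\sigma_*)$ vanishes at $Q$.
\item In Step~3, the identity $q_v=\varphi_{vv}$ at $Q$ follows from $Dq=D^2\varphi\,D\varphi/q$ alone; Step~2 is not needed there.
\item You should state that non-exceptionality forces $c(Q)=q(Q)>0$, by Lemma \ref{arc}(i). You use this when dividing by $c$ and when defining $\sigma$.
\end{itemize}

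Your concern about the tangential case is legitimate. If $(U,V)(Q)$ is tangent to $\Gamma_{\rm sonic}$, then $\mathbf{n}_Q\cdot(U,V)(Q)=0$, so \eqref{PsiQnQ} reads $\psi^Q(Q+t\mathbf{n}_Q)\ge 0$, while a second-order expansion only gives $o(t^2)$. The paper's proof does not handle this case either. Its concluding Taylor-expansion step has exactly the same limitation, and with only $C^2$ regularity there is no information on the sign of the remainder.

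So this is not a step you are missing relative to the paper. Your argument establishes:
\begin{itemize}
\item twice differentiability of $\psi^Q$ at $Q$;
\item \eqref{PsiQtauQ} in full, since $\mbtau_Q\cdot\mathbf{e}_v\neq 0$ by non-exceptionality, as you note;
\item \eqref{PsiQnQ} whenever $\mathbf{n}_Q\cdot(U,V)(Q)\neq 0$.
\end{itemize}
That is all the paper's proof actually delivers.
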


\begin{rem}\label{rem:6.1x}
Although $\psi^Q$
is, {\it a priori}, 
a $C^2$ function on $\Omega^+\cup \Gamma_{\rm sonic}$ {\rm (}recall \eqref{sonicCurveRelOpen}{\rm )} and
a $C^2$ function on $\overline{\Omega^-}$, 
it is not necessarily $C^2$ across $\Gamma_{\rm sonic}$. Nevertheless, 
estimates \eqref{PsiQnQ}--\eqref{PsiQtauQ} hold when approached from either side of  $\Gamma_{\rm sonic}$.    
Indeed, under the assumption that the sonic point is non-exceptional and ${\partial}^{\perp}c(Q)=0$,
the second-order derivatives can be given explicitly as in \eqref{imporantcc}--\eqref{PsitaunQ==}.
Thus, the solution is twice differentiable at $Q$ across the sonic curve under the assumptions of {\rm Lemma 6.1}.
Moreover, if the assumptions hold at every point of the sonic curve, then $\psi^Q$ is $C^2$ across the sonic curve.
\end{rem}

\begin{proof}
This proof is divided into two steps.

\smallskip
{\bf 1}. In this step, we calculate $u_{\xi}, u_{\eta} = v_{\xi}, v_{\eta}$ at the non-exceptional sonic point.
By \eqref{1}--\eqref{1.1}, at any sonic point ({\it i.e.}, with $U^2+V^2 = c^2$), we have
\begin{align}\label{eqonsonic1}
V^2u_{\xi} - 2UVu_{\eta} +U^2v_{\eta} =0.
\end{align}
Differentiating the Bernoulli equation \eqref{2} with respect to $\xi$ and $\eta$, respectively, we obtain
\begin{align}
&c_{\xi}=-\frac{\gamma-1}{2c}(U u_{\xi} + Vv_{\xi}),\label{Bernoullixi}\\
& c_{\eta}=-\frac{\gamma-1}{2c}(U u_{\eta} + Vv_{\eta}).\label{Bernoullieta}
\end{align}
In addition, on the sonic curve ({\it i.e.}, $q=c$), the Bernoulli equation becomes
\begin{align}\label{Bcsonic}
  \frac{\gamma+1}{2(\gamma-1)}c^2 + \varphi = K\qquad\mbox{on $\Gamma_{\rm sonic}$}.
\end{align}
Taking tangential derivatives of both sides of \eqref{Bcsonic} along $\Gamma_{\rm sonic}$, we have
\begin{align}\label{ccxietaf'}
  c_{\xi} + f' c_{\eta} = -\mu^2 (\cos\sigma + f'\sin\sigma)\qquad\,\,\mbox{on $\Gamma_{\rm sonic}$}.
\end{align}

We now derive the convenient expressions for $u_\xi$,  $u_\eta=v_\xi$, and $v_\eta$; see \eqref{uxi==}--\eqref{veta==} below.
Recall the definition
\begin{align}
\sin\sigma c_{\xi} - \cos\sigma c_{\eta} = \frac{\partial^{\perp} c }{c}\qquad\,\,\mbox{on $\Gamma_{\rm sonic}$}.\label{perpc=}
\end{align}
Then \eqref{ccxietaf'}$\times \sin\sigma$ minus \eqref{perpc=} yields
\[\label{cxi=}
c_{\eta} = \frac{ -\mu^2\sin\sigma (\cos\sigma + f'\sin\sigma) - \frac{\partial^{\perp} c }{c}}{f'\sin\sigma +\cos\sigma } = \frac{-\partial^{\perp} c }{c (\cos\sigma + f'\sin\sigma)} - \mu^2 \sin\sigma.
\]
Substituting it into \eqref{ccxietaf'}, we have
\[\label{ceta=}
c_{\xi}
= -f'\Big( \frac{-\partial^{\perp} c }{c (\cos\sigma + f'\sin\sigma)} - \mu^2 \sin\sigma  \Big) -\mu^2 (\cos\sigma + f'\sin\sigma)
= \frac{f'\partial^{\perp} c }{c (\cos\sigma + f'\sin\sigma)}- \mu^2 \cos\sigma.
\]

Therefore, if the sonic point $Q$ is non-exceptional, {\it i.e.}, $\cos\sigma + f' \sin\sigma\neq0$,
\begin{align}
   c_{\xi}
   =& \frac{f' \partial^{\perp} c}{c(\cos\sigma + f' \sin\sigma)} -\mu^2 \cos\sigma\qquad\,\,\mbox{on $\Gamma_{\rm sonic}$},\label{cxis}\\
   c_{\eta}
   =&\frac{-\partial^{\perp} c}{c(\cos\sigma + f' \sin\sigma)} -\mu^2 \sin\sigma\qquad\,\,\mbox{on $\Gamma_{\rm sonic}$}. \label{cetas}
 \end{align}

By \eqref{eqonsonic1}--\eqref{Bernoullieta}, $v_{\xi} = u_{\eta}$, and the definition $(U,V)=c(\cos\sigma,\sin\sigma)$, we have
\begin{align}
&\sin^2\sigma u_{\xi} - 2\sin\sigma \cos\sigma u_{\eta} +\cos^2\sigma v_{\eta} =0,\\
&\cos\sigma u_{\xi} + \sin\sigma u_{\eta} = -\frac{2}{\gamma-1}c_{\xi},\\
&\cos\sigma u_{\eta} + \sin\sigma v_{\eta} =-\frac{2}{\gamma-1} c_{\eta}.
\end{align}
It can be rewritten as
\begin{align}
  \mathbf{A} (u_{\xi}, u_{\eta}, v_{\eta})^{\top} =-\big(0, \frac{2}{\gamma-1}c_{\xi}, \frac{2}{\gamma-1} c_{\eta}\big)^{\top},
\end{align}
where
\[ \mathbf{A} := \begin{pmatrix} \sin^2\sigma& -2\sin\sigma \cos\sigma & \cos^2\sigma\\
 \cos\sigma& \sin\sigma & 0 \\
0&\cos\sigma& \sin\sigma
	\end{pmatrix} .\]
Notice that
$\det \mathbf{A} = \sin^4\sigma + 2 \sin^2\sigma \cos^2\sigma  +  \cos^4\sigma  = (\sin^2\sigma + \cos^2\sigma)^2 =1$
so that
\begin{align}\label{uxiuetaveta=}
  (u_{\xi}, u_{\eta}, v_{\eta})^{\top} = - \mathbf{A}^{-1}\big(0, \frac{2}{\gamma-1}c_{\xi}, \frac{2}{\gamma-1} c_{\eta}\big)^{\top},
\end{align}
where
\[ \mathbf{A}^{-1} := \begin{pmatrix} \sin^2\sigma&  \cos\sigma(1+\sin^2\sigma) & -\sin\sigma \cos^2\sigma\\
-\sin\sigma \cos\sigma& \sin^3\sigma & \cos^3\sigma \\
\cos^2\sigma&-\sin^2\sigma \cos\sigma& \sin\sigma(1+\cos^2\sigma)
	\end{pmatrix}. \]
By \eqref{uxiuetaveta=} and \eqref{cxis}--\eqref{cetas}, we have
\begin{align}
  u_{\xi} =&\, -\frac{2}{\gamma-1}\Big( \cos\sigma(1+\sin^2\sigma) c_{\xi} - \sin\sigma \cos^2\sigma\,c_{\eta}   \Big)\notag\\
  =&\, -\frac{2}{\gamma-1}\Big(  \sin\sigma \cos\sigma \frac{\partial^{\perp} c}{c} +  \cos\sigma c_{\xi} \Big)\notag\\
  =&\, -\frac{2}{\gamma-1}\Big(  \sin\sigma \cos\sigma \frac{\partial^{\perp} c}{c} +  \cos\sigma\big(  \frac{f'\partial^{\perp} c }{c (\cos\sigma + f'\sin\sigma)}- \mu^2 \cos\sigma  \big) \Big)\notag\\
  =&\,  -\frac{2\partial^{\perp} c}{(\gamma-1)c}\frac{(\sin\sigma \cos\sigma + f'\sin^2\sigma + f')\cos\sigma}{\cos\sigma + f'\sin\sigma} +\frac{2}{\gamma+1}\cos^2\sigma,\notag\\
  u_{\eta}=&\, -\frac{2}{\gamma-1} \big( \sin^3 \sigma c_{\xi} + \cos^3\sigma c_{\eta}  \big)\notag\\
  =&\, -\frac{2}{\gamma-1} \Big( \sin^3 \sigma \big(   \frac{f'\partial^{\perp} c }{c (\cos\sigma + f'\sin\sigma)}- \mu^2 \cos\sigma \big)
   + \cos^3\sigma \big( \frac{-\partial^{\perp} c }{c (\cos\sigma + f'\sin\sigma)} - \mu^2 \sin\sigma \big)  \Big)\notag\\
  =&\, \frac{2\partial^{\perp} c}{(\gamma-1)c}\frac{\cos^3 \sigma - f' \sin^3 \sigma}{\cos\sigma + f'\sin\sigma} + \frac{2}{\gamma+1} \sin\sigma \cos\sigma,\notag\\
  v_{\eta}=&\, \frac{2}{\gamma-1}\Big( \sin^2\sigma \cos\sigma c_{\xi} - \sin\sigma (1+\cos^2\sigma)c_{\eta}  \Big)\notag\\
  =&\, \frac{2}{\gamma-1}\Big( \sin^2\sigma \cos\sigma \big( \frac{f'\partial^{\perp} c }{c (\cos\sigma + f'\sin\sigma)}- \mu^2 \cos\sigma  \big)\notag\\
    &\qquad\quad\,\,- \sin\sigma (1+\cos^2\sigma)\big( \frac{-\partial^{\perp} c }{c (\cos\sigma + f'\sin\sigma)} - \mu^2 \sin\sigma  \big) \Big)\notag\\
  =&\, \frac{2\partial^{\perp} c}{(\gamma-1)c}\frac{(1+\cos^2\sigma + f'\sin\sigma \cos\sigma )\sin\sigma}{\cos\sigma + f'\sin\sigma}+\frac{2}{\gamma+1}\sin^2\sigma. \notag
\end{align}
Therefore, we obtain
\begin{align}
  u_{\xi} =&\,   -\frac{2\partial^{\perp} c}{(\gamma-1)c}\frac{(\sin\sigma \cos\sigma + f'\sin^2\sigma + f')\cos\sigma}{\cos\sigma + f'\sin\sigma} +\frac{2}{\gamma+1}\cos^2\sigma,\label{uxi==}\\
   u_{\eta}=v_{\xi}=&\, \frac{2\partial^{\perp} c}{(\gamma-1)c}\frac{\cos^3 \sigma - f' \sin^3 \sigma}{\cos\sigma + f'\sin\sigma} + \frac{2}{\gamma+1} \sin\sigma \cos\sigma,\label{ueta==}\\
   v_{\eta}=&\, \frac{2\partial^{\perp} c}{(\gamma-1)c}\frac{(1+\cos^2\sigma + f'\sin\sigma \cos\sigma )\sin\sigma}{\cos\sigma + f'\sin\sigma}+\frac{2}{\gamma+1}\sin^2\sigma.\label{veta==}
\end{align}

{\bf 2}. Applying the continuity condition \eqref{conti2} on the sonic curve, we have
\begin{align}\label{psiQQ}
  \psi^Q(Q) =0, \qquad D\psi^Q(Q)=0.
\end{align}
Then
\begin{align}\label{DpsiQQ}
  \partial_{\mbnu_1} \psi^Q (Q) =0,\qquad  \partial_{\mbnu_2} \psi^Q (Q) =0,
\end{align}
where $\mbnu_1 = (1, f')$ and $\mbnu_2= (f',-1)$ are the tangential vector and
the inner normal vector to the sonic curve, pointing into the subsonic domain $\Omega^+$, respectively.
In addition,
\begin{align}
\partial_{\mbnu_1} \psi^Q (\xi,\eta) &= (u-u_2^Q) + f'(v- v_2^Q),\label{PsiQtau==}\\
\partial_{\mbnu_2} \psi^Q (\xi,\eta) &= f'(u-u_2^Q) - (v- v_2^Q).\label{PsiQn==}
\end{align}
Applying \eqref{uxi==}--\eqref{veta==} and \eqref{PsiQtau==}--\eqref{PsiQn==}, we see that, in either $\overline{\Omega_+}$ or $\overline{\Omega_-}$,
\begin{align}
\partial_{\mbnu_1}^2 \psi^Q
&= u_{\xi}+ 2f'u_{\eta} + (f')^2 v_{\eta}  +  f''(v- v_2^Q) \notag\\
&=  -\frac{2\partial^{\perp} c}{(\gamma-1)c}\frac{(\sin\sigma \cos\sigma + f'\sin^2\sigma + f')\cos\sigma}{\cos\sigma + f'\sin\sigma} +\frac{2}{\gamma+1}\cos^2\sigma \notag\\
&\quad\,+ 2 f' \Big(  \frac{2\partial^{\perp} c}{(\gamma-1)c}\frac{\cos^3 \sigma - f' \sin^3 \sigma}{\cos\sigma + f'\sin\sigma} + \frac{2}{\gamma+1} \sin\sigma \cos\sigma   \Big) \notag\\
 &\quad\, + (f')^2 \Big( \frac{2\partial^{\perp} c}{(\gamma-1)c}\frac{(1+\cos^2\sigma + f'\sin\sigma \cos\sigma )\sin\sigma}{\cos\sigma + f'\sin\sigma}+\frac{2}{\gamma+1}\sin^2\sigma  \Big)+  f''(v- v_2^Q) \notag\\
 &= \frac{2\partial^{\perp} c}{(\gamma-1)c}\frac{\big( \cos\sigma + f'\sin\sigma \big)\big( (f')^2\cos\sigma\sin\sigma + f' \cos^2\sigma - f'\sin^2\sigma -\sin\sigma \cos\sigma\big)}{\cos\sigma + f'\sin\sigma}\notag\\
 &\quad\, + \frac{2}{\gamma+1} \big(\cos\sigma + f' \sin\sigma \big)^2 +  f''(v- v_2^Q)\notag\\
 &=  \frac{2}{\gamma+1} \big(\cos\sigma + f' \sin\sigma \big)^2 +  f''(v- v_2^Q)+  \frac{2\partial^{\perp} c}{(\gamma-1)c}(\cos\sigma + f'\sin\sigma) (f'\cos\sigma - \sin\sigma),\notag
 \end{align}
\begin{align}
 \partial_{\mbnu_2}^2 \psi^Q
 &=(f')^2 u_{\xi}  - 2f'u_{\eta} + v_{\eta} + f'f''(u-u_2^Q)\notag\\
 &=(f')^2\Big(-\frac{2\partial^{\perp} c}{(\gamma-1)c}\frac{(\sin\sigma \cos\sigma + f'\sin^2\sigma + f')\cos\sigma}{\cos\sigma + f'\sin\sigma} +\frac{2}{\gamma+1}\cos^2\sigma \Big) \notag\\
 &\quad\, - 2f'\Big( \frac{2\partial^{\perp} c}{(\gamma-1)c}\frac{\cos^3 \sigma - f' \sin^3 \sigma}{\cos\sigma + f'\sin\sigma} + \frac{2}{\gamma+1} \sin\sigma \cos\sigma \Big)\notag\\
 &\quad\, + \Big( \frac{2\partial^{\perp} c}{(\gamma-1)c}\frac{(1+\cos^2\sigma + f'\sin\sigma \cos\sigma )\sin\sigma}{\cos\sigma + f'\sin\sigma}+\frac{2}{\gamma+1}\sin^2\sigma\Big)+ f'f''(u-u_2^Q)\notag\\
 &=-\frac{2\partial^{\perp} c}{(\gamma-1)c}\frac{(f'\cos\sigma - \sin\sigma)\big(2f' \sin\sigma\cos\sigma + (f')^2 \sin^2\sigma + (f')^2 + 1+ \cos^2\sigma   \big)}{\cos\sigma + f'\sin\sigma}\notag\\
 &\quad\, + \frac{2}{\gamma+1} ( \sin\sigma - f' \cos\sigma)^2 + f'f''(u-u_2^Q)\notag\\
 &=\frac{2}{\gamma+1} ( \sin\sigma - f' \cos\sigma)^2 + f'f''(u-u_2^Q) \notag\\
 &\quad\, -\frac{2\partial^{\perp} c}{(\gamma-1)c} \Big( \cos\sigma + f'\sin\sigma + \frac{1+(f')^2}{\cos\sigma + f'\sin\sigma}  \Big)(f'\cos\sigma - \sin\sigma),
 \end{align}
and
\begin{align}
\partial_{\mbnu_1}\partial_{\mbnu_2} \psi^Q
&=f' u_{\xi} + \big((f')^2-1 \big) u_{\eta} - f' v_{\eta} + f''(u-u_2^Q)\notag\\
&= f' \big(  -\frac{2\partial^{\perp} c}{(\gamma-1)c}\frac{(\sin\sigma \cos\sigma + f'\sin^2\sigma + f')\cos\sigma}{\cos\sigma + f'\sin\sigma} +\frac{2}{\gamma+1}\cos^2\sigma \big)\notag\\
&\quad\, +  \big((f')^2-1 \big) \big( \frac{2\partial^{\perp} c}{(\gamma-1)c}\frac{\cos^3 \sigma - f' \sin^3 \sigma}{\cos\sigma + f'\sin\sigma} + \frac{2}{\gamma+1} \sin\sigma \cos\sigma  \big)\notag\\
&\quad\, - f' \big(  \frac{2\partial^{\perp} c}{(\gamma-1)c}\frac{(1+\cos^2\sigma + f'\sin\sigma \cos\sigma )\sin\sigma}{\cos\sigma + f'\sin\sigma}+\frac{2}{\gamma+1}\sin^2\sigma \big) + f''(u-u_2^Q) \notag\\
&= \frac{2}{\gamma+1}(\cos\sigma + f'\sin \sigma )( f'\cos\sigma - \sin\sigma)+ f'' (u-u_2^Q)\notag\\
&\quad\,-\frac{2\partial^{\perp} c}{(\gamma-1)c}(\cos\sigma + f'\sin \sigma)^2.
 \end{align}

Therefore, we have
\begin{align}
\partial^2_{\mbnu_1} \psi^Q (\xi,\eta)
&= \frac{2}{\gamma+1}(\cos\sigma + f'\sin \sigma )^2 + f'' (v-v_2^Q)\notag\\
  &\quad\, + \frac{2\partial^{\perp}c}{(\gamma-1)c}(\cos\sigma + f'\sin \sigma )(f'\cos\sigma - \sin\sigma),\label{psitt}\\
 \partial^2_{\mbnu_2} \psi^Q (\xi,\eta)
 &= \frac{2}{\gamma+1}(\sin\sigma - f'\cos\sigma )^2 + f' f'' (u-u_2^Q)\notag\\
  &\quad\, -\frac{2\partial^{\perp}c}{(\gamma-1)c}\Big( \cos\sigma + f'\sin \sigma + \frac{1+(f')^2}{\cos\sigma + f'\sin\sigma} \Big) (f'\cos\sigma - \sin\sigma),\label{psinn}\\
   \partial_{\mbnu_1}\partial_{\mbnu_2} \psi^Q (\xi,\eta)
&= \frac{2}{\gamma+1}(\cos\sigma + f'\sin \sigma )( f'\cos\sigma - \sin\sigma)+ f'' (u-u_2^Q)\notag\\
&\quad\, -\frac{2\partial^{\perp} c}{(\gamma-1)c}(\cos\sigma + f'\sin \sigma)^2.\label{Psitaun==}
  \end{align}

At the sonic point $Q= (\xi_*, f(\xi_*))$, $u(Q)= u_2^Q$ and $v(Q)=v_2^Q$.
Thus, under the assumption that $f\in C^{1,1}$,
we see that $ f''(v- v_2^Q)(Q) =0$ and $f'f''(u-u_2^Q)(Q) =0$.
Then, by ${\partial}^{\perp}c(Q)=0$ and \eqref{psitt}--\eqref{Psitaun==}, we have
\begin{align}
\lim_{P\rightarrow Q,\,P\in\overline{\Omega_+}\mbox{ or }P\in\overline{\Omega_-}}\partial^2_{\mbtau} \psi^Q (P)
&= \frac{2}{\gamma+1}\frac{(\cos\sigma_* + f'(\xi_*)\sin \sigma_* )^2}{1+(f'(\xi_*))^2}\notag\\
&= \frac{2}{\gamma+1} \big(\frac{\mbtau_Q\cdot(U,V)(Q)}{c(Q)}\big)^2,\label{imporantcc}\\
\lim_{P\rightarrow Q,\,P\in\overline{\Omega_+}\mbox{ or }P\in\overline{\Omega_-}}\partial^2_{\mathbf{n}} \psi^Q (P)
&= \frac{2}{\gamma+1}\frac{(\sin\sigma_* - f'(\xi_*)\cos\sigma_* )^2}{1+(f'(\xi_*))^2} \notag\\
&= \frac{2}{\gamma+1} \big(\frac{\mathbf{n}_Q\cdot(U,V)(Q)}{c(Q)}\big)^2,\label{imporantcc1}\\
\lim_{P\rightarrow Q,\,P\in\overline{\Omega_+}\mbox{ or }P\in\overline{\Omega_-}} \partial_{\mbtau}\partial_{\mathbf{n}} \psi^Q (P)
&= \frac{2}{\gamma+1}\frac{(\cos\sigma_* + f'(\xi_*)\sin \sigma_* )(f'(\xi_*)\cos\sigma_*-\sin\sigma_*)}{1+(f'(\xi_*))^2} \notag\\
&=\frac{2}{\gamma+1} \frac{(\mbtau_Q\cdot(U,V)(Q))(\mathbf{n}_Q\cdot(U,V)(Q))}{c^2(Q)}.\label{PsitaunQ==}
\end{align}
Since $\psi^Q$ is $C^1$ across the sonic curve, \emph{i.e.}, $(U,V,c)$ is continuous across the sonic curve,
it follows from \eqref{imporantcc}--\eqref{PsitaunQ==} that $\psi^Q$ is twice differentiable at $Q$
across the sonic curve.

Clearly, by the Taylor expansion, \eqref{psiQQ}--\eqref{DpsiQQ}, \eqref{imporantcc}--\eqref{PsitaunQ==},
and the assumption that $\psi^Q\in C^2(\overline{\Omega^{\pm}})$, we conclude \eqref{PsiQnQ}--\eqref{PsiQtauQ}.

\end{proof}

\begin{rem}
Based on {\rm Lemma \ref{lemntau}}, it is direct to see that the sonic point must be exceptional
under the assumptions given in {\rm Theorem \ref{theoremnew}}. Then {\rm  Theorem \ref{theoremnew}} follows.
\end{rem}

\begin{rem}\label{nu1nu2}
Applying \eqref{psiQQ}--\eqref{DpsiQQ}, \eqref{imporantcc}--\eqref{PsitaunQ==}, the Taylor expansion, and
{\rm  Remark \ref{rem:6.1x}}, it follows that, near the sonic point $Q$,
\begin{align}\label{psiQapprox}
  \psi^Q\approx \widehat{\psi^Q}:=\frac{1}{\gamma+1}\Big( \frac{\mathbf{n}_Q\cdot(U,V)(Q)}{c(Q)}(\tilde{\xi} - \tilde{\xi}_*)
  + \frac{\mbtau_Q\cdot(U,V)(Q)}{c(Q)}(\tilde{\eta} - \tilde{\eta}_*) \Big)^2,
\end{align}
where
$\tilde{\xi}\defs f'(\xi_*) \xi - \eta$, $\tilde{\eta}\defs \xi + f'(\xi_*) \eta$, $\tilde{\xi}_*\defs f'(\xi_*) \xi_* - \eta_*$,
and $\tilde{\eta}_*\defs \xi_* + f'(\xi_*) \eta_*$.
From \eqref{psiQapprox}, it follows that $\widehat{\psi^Q}$ is zero along a line near and passing through the sonic point $Q$.
Therefore, \eqref{PsiQnQ}--\eqref{PsiQtauQ} give the optimal estimates near the sonic point $Q$.
\end{rem}

\section{Proof of Theorem \ref{theoremcone}}
We now prove Theorem \ref{theoremcone} by a contradiction argument.

\begin{proof}[Proof of {\rm Theorem \ref{theoremcone}}]
This proof is divided into two steps.

\smallskip
{\bf 1}.
It follows from \eqref{1}--\eqref{1.1} and \eqref{psiQQIN} that $\psi^Q$ satisfies the following equation:
\begin{align}\label{NPSIQ}
  \mathcal{N}(\psi^Q) \defs  a_{11} \psi_{\xi\xi}^Q + 2a_{12}\psi_{\xi\eta}^Q + a_{22}\psi_{\eta\eta}^Q =0,
\end{align}
where
\begin{align*}
  a_{11}= 1-\frac{U^2}{c^2}, \quad a_{12} = -\frac{UV}{c^2},\quad a_{22} = 1-\frac{V^2}{c^2}.
\end{align*}
Since $f\in C^{1,1}(\mathbb{R})$, there is an interior circle $B_{R}(Q_0)\subset\Omega^+$ with center $Q_0=(\xi_0, \eta_0)$ and radius $R$, 
depending only on the $C^{1,1}$-norm of $f$ on $(l_1,l_2)$ and $r$,
which touches $\Gamma_{\rm sonic}$ uniquely at the sonic point $Q=(\xi_*, \eta_*)$ (see Fig. \ref{cone}).
Let $(U,V)(Q) \defs (c\cos \sigma_*, c\sin\sigma_*)$.
Define the $(\tilde{\xi},\tilde{\eta})$--coordinates by
  \begin{align}\label{defcoordinate}
    \tilde{\xi}\defs (\xi-\xi_0)\cos\sigma_* + (\eta-\eta_0)\sin\sigma_*, \quad\,
    \tilde{\eta}\defs -(\xi-\xi_0)\sin\sigma_* + (\eta-\eta_0)\cos\sigma_*.
  \end{align}
Let
\begin{align}\label{widetildepsiQ}
  \widetilde{\psi}^Q(\tilde{\xi},\tilde{\eta})=\psi^Q(\xi,\eta).
\end{align}
Then, in the $(\tilde{\xi},\tilde{\eta})$--coordinates, \eqref{NPSIQ} becomes
  \begin{align*}
   \widetilde{ \mathcal{N}}(\widetilde{\psi^Q})
   := \widetilde{a_{11}} \widetilde{\psi}_{\tilde{\xi}\tilde{\xi}}^Q + 2 \widetilde{a_{12}} \widetilde{\psi}_{\tilde{\xi}\tilde{\eta}}^Q + \widetilde{a_{22}} \widetilde{\psi}_{\tilde{\eta}\tilde{\eta}}^Q =0,
  \end{align*}
  where
  \begin{align*}
  &\widetilde{a_{11}}\defs 1 -\frac{ \widetilde{U}^2}{\widetilde{c}^2},\quad
  \widetilde{a_{12}}\defs -\frac{ \widetilde{U}\widetilde{V}}{\tilde{c}^2},\quad \widetilde{a_{22}}\defs 1 -\frac{ \widetilde{V}^2}{\widetilde{c}^2},\\
  &\widetilde{U} \defs  U\cos\sigma_* + V\sin\sigma_*, \quad \widetilde{V} \defs  V\cos\sigma_* - U\sin\sigma_*,\\
  &\widetilde{c}^2 \defs (\gamma - 1)\big(K- \widetilde{\varphi} - \frac12 |D\widetilde{\varphi}|^2\big).
  \end{align*}
Clearly, $\widetilde{V}(Q)=0$, $\widetilde{U}=\widetilde{c}$, and
\begin{align}
&\widetilde{a_{11}}(Q) =0, \quad \widetilde{a_{12}}(Q)=0, \quad \widetilde{a_{22}}(Q) =1,\label{widetildeaaa}\\
  &\widetilde{a_{11}} >0, \quad \widetilde{a_{22}}>0 \qquad \text{in $\Omega^+$}.\label{a11a22>0}
\end{align}
In addition, in the $(\tilde{\xi}, \tilde{\eta})$--coordinates, $Q_0=(0,0)$ and $Q=(\tilde{\xi}_*, \tilde{\eta}_*)$, where
\begin{align*}
  \tilde{\xi}_*\defs & (\xi_*-\xi_0)\cos\sigma_* + (\eta_*-\eta_0)\sin\sigma_*,\\
   \tilde{\eta}_*\defs & -(\xi_*-\xi_0)\sin\sigma_* + (\eta_*-\eta_0)\cos\sigma_*.
\end{align*}
Thus, \eqref{assumeAthmcone} yields
\begin{equation}\label{assumeAthm2x}
 \widetilde{\psi}^Q>a_2(\tilde{\xi}-\tilde{\xi}_*)^2
 -\epsilon(\tilde{\eta}-\tilde{\eta}_*)^2,
\end{equation}
where $a_2$ is a positive constant and $\epsilon>0$ is a small constant.

\smallskip
{\bf 2}.
Assume that $Q$ is not exceptional, \emph{i.e.}, $(\widetilde{U}(Q),\widetilde{V}(Q))$ is not a normal vector to $\Gamma_{\rm sonic}$ at $Q$.
Then
\begin{align}\label{3.146x}
\tilde{\eta}_*\neq 0.
\end{align}

\begin{figure}[!h]
	\centering
	\includegraphics[width=0.5\textwidth]{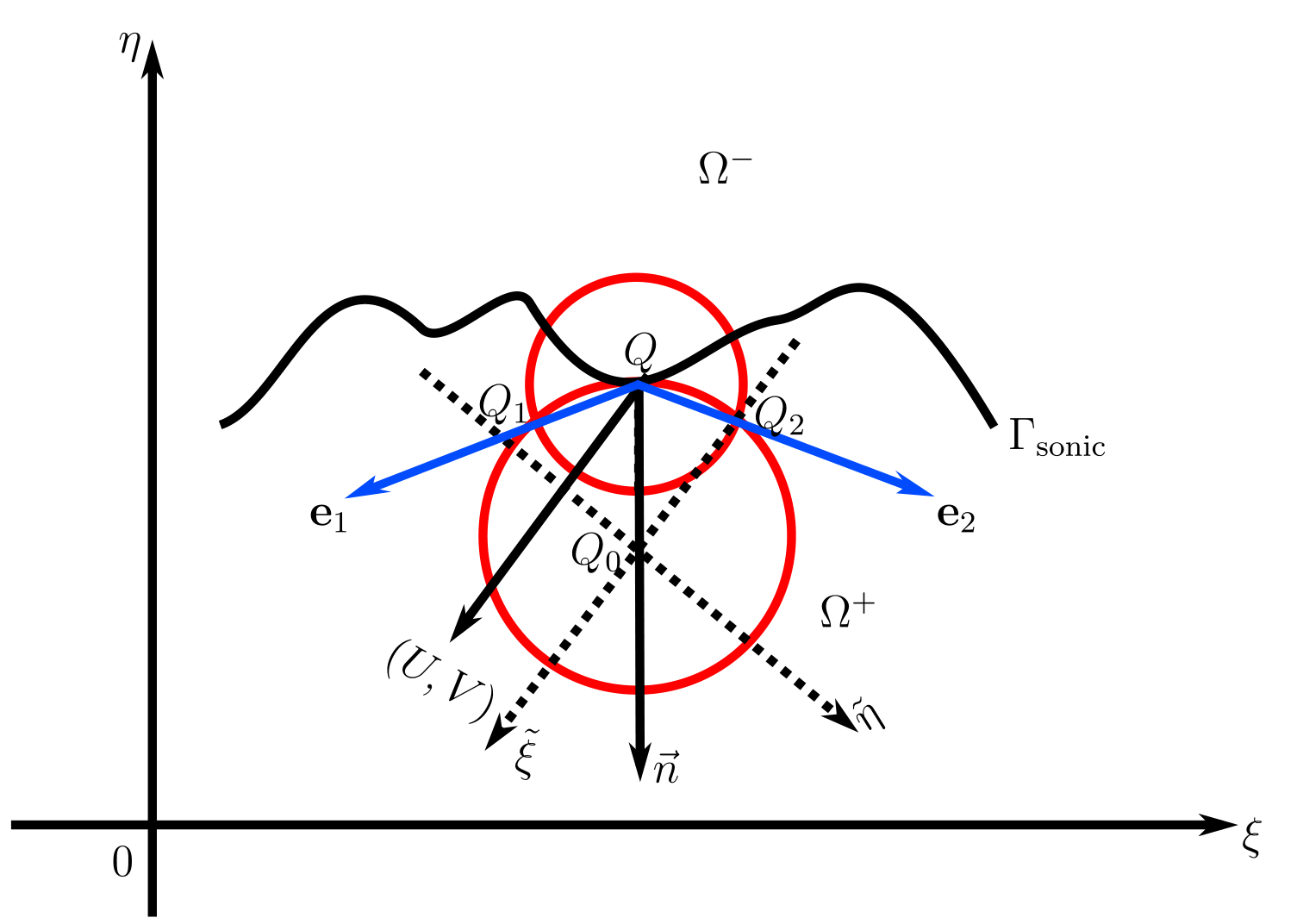}
	\caption{The sonic point $Q$. \label{cone}}
\end{figure}
Let
\begin{align*}
 B_{\varepsilon_*}(Q) =\big\{ (\tilde{\xi},\tilde{\eta})\,:\, (\tilde{\xi}-\tilde{\xi}_*)^2 + (\tilde{\eta} - \tilde{\eta}_*)^2 <\varepsilon_*^2\big\},
\end{align*}
where $\varepsilon_*\in (0, \frac12 |\tilde{\eta}_*|)$ is small such that $Q_0\notin B_{\varepsilon_*}(Q)$,
and let
\begin{align}\label{aij><}
\widetilde{a_{22}} \geq \frac12, \quad |\widetilde{a_{12}}| \leq \frac{|\tilde{\eta}_*|}{100(|\tilde{\xi}_*| + |\tilde{\eta}_*|)},
\quad 0< \widetilde{a_{11}} \leq \frac12 \qquad\,\,\,\mbox{on $B_{\varepsilon_*}(Q)$},
\end{align}
where we have used \eqref{widetildeaaa}--\eqref{a11a22>0} and $\widetilde{\varphi}\in C^2(\overline{\Omega^+})$. 
Based on the construction, $\varepsilon_*$ depends only 
on $R$, $\alpha$, and the $C^{1,\alpha}$-norm of $\psi^{Q}$ in $\overline{B_r(Q)\cap \Omega^+}$.

Let $Q_1$ and $Q_2$ be the two intersection points of $\partial B_{\varepsilon_*}(Q)\cap\partial B_R(Q_0)$,
and let $\mathbf{e}_1=
\vec{QQ_1}$ and $\mathbf{e}_2=
\vec{QQ_2}$ (see Fig. \ref{cone}).
For the given fixed constants $\varepsilon_*$ and $R$,
on arc $\partial B_{\varepsilon_*}(Q)\cap B_R(Q_0)$, we have
\begin{align}\label{theta0*==}
  \tilde{\xi} - \tilde{\xi}_* = \varepsilon_* \cos\theta, \quad \tilde{\eta} - \tilde{\eta}_* = \varepsilon_*\sin\theta
  \qquad\,\, \mbox{for $\theta \in ( - (\theta_0+\theta_1),\,\theta_0-\theta_1)$},
\end{align}
where $\theta_0 = \angle (\mathbf{e}_1, \mathbf{n})= \arccos(\frac{\varepsilon_*}{2R}) \in (0, \frac{\pi}{2})$.
Then we can select $\varepsilon_1$ small enough,
depending only on $\varepsilon_*$ and $R$,
such that, for any $|\theta_1|\leq\varepsilon_1$,
if $\theta \in ( - (\theta_0+\theta_1),  \theta_0-\theta_1)$, then $\cos\theta\geq \frac{\varepsilon_*}{4R}$
and $|\tan\theta|<\frac{4R}{\varepsilon_*}$.
Since $\cos\theta\geq \frac{\varepsilon_*}{4R}$
and $|\tan\theta|<\frac{4R}{\varepsilon_*}$ on $\partial B_{\varepsilon_*}(Q)\cap B_R(Q_0)$,
it follows from \eqref{assumeAthm2x} and \eqref{theta0*==} that, on $\partial B_{\varepsilon_*}(Q)\cap B_R(Q_0)$,
\begin{align*}
\widetilde{\psi}^Q(\tilde{\xi}, \tilde{\eta})&> a_2 \varepsilon_*^2 \cos^2 \theta - \epsilon \varepsilon_*^2 \sin^2 \theta=\varepsilon_*^2\cos^2\theta(a_2-\epsilon\tan^2\theta)\\[1mm]
&> \varepsilon_*^2 \frac{\varepsilon_*^2}{16 R^2}(a_2-\epsilon \frac{16R^2}{\varepsilon_*^2}) = \frac{\varepsilon_*^2}{16 R^2}(a_2\varepsilon_*^2 -16R^2\epsilon ).
\end{align*}
Let
\begin{align}\label{defsepsilon1}
\epsilon_1^*\defs \frac{a_2}{2}  (\frac{\varepsilon_*}{4R})^2.
\end{align}
Clearly,
$\epsilon_1^*$ and $\varepsilon_1$ depend only on $a_2$, $\varepsilon_*$, and $R$, 
and thus they depend only on $a_2$, $r$, the $C^{1,1}$-norm of $f$ on $(l_1,l_2)$,
$\alpha$, and the $C^{1,\alpha}$-norm of $\psi^{Q}$ in $\overline{B_r(Q)\cap \Omega^+}$.
Then, for any $\epsilon<\epsilon_1^*$ and $|\theta_1|\leq\varepsilon_1$, we have
\begin{align}\label{psiQ>kappa}
\widetilde{\psi}^Q(\tilde{\xi}, \tilde{\eta})>  \frac{a_2\varepsilon_*^4}{32R^2}\qquad\,
\text{on $\partial B_{\varepsilon_*}(Q)\cap B_R(Q_0)$}.
\end{align}
Let
\begin{align}\label{PsiQ==}
  \widetilde{\Psi}^Q(\tilde{\xi},\tilde{\eta}) =  \kappa (e^{-l(\tilde{\xi}^2+ \tilde{\eta}^2)} - e^{-l R^2} ) - b (\tilde{\eta} - \tilde{\eta}_*)^2
  \qquad \text{in $\overline{B_{\varepsilon_*}(Q)\cap \Omega^+}$},
\end{align}
where
\begin{align}\label{kappalb=}
  \kappa= \frac{a_2\varepsilon_*^4}{32R^2}, \quad l = \frac{12}{\tilde{\eta}_*^2}, \quad
  b= \frac{l^2}{6} \kappa \tilde{\eta}_*^2 e^{-l((|\tilde{\xi}_*| + |\tilde{\eta}_*|)^2 + 4\tilde{\eta}_*^2)}.
\end{align}
By \eqref{a11a22>0}, \eqref{aij><}, and \eqref{kappalb=},
direct calculations yield that, in domain $B_{\varepsilon_*}(Q)\cap \Omega^+$,
\begin{align}\label{NPsiQ>0}
  \widetilde{\mathcal{N}} \widetilde{\Psi}^Q
  &= 2l \kappa e^{-l(\tilde{\xi}^2+ \tilde{\eta}^2)}\big(2l( \widetilde{a_{11}}\tilde{\xi}^2
  + 2  \widetilde{a_{12}}\tilde{\xi} \tilde{\eta} +  \widetilde{a_{22}} \tilde{\eta}^2)- ( \widetilde{a_{11}} +  \widetilde{a_{22}})\big) -2b  \widetilde{a_{22}}\notag\\
  &\geq  2l \kappa e^{-l(\tilde{\xi}^2+ \tilde{\eta}^2)}\big(2l( 2 \widetilde{a_{12}}\tilde{\xi}\tilde{\eta} +   \widetilde{a_{22}} \tilde{\eta}^2)- ( \widetilde{a_{11}} + \widetilde{a_{22}})\big) -2b  \widetilde{a_{22}}\notag\\
  &= 2l \kappa e^{-l(\tilde{\xi}^2+ \tilde{\eta}^2)} \Big(2l \big( 2 \widetilde{a_{12}}\tilde{\xi} \tilde{\eta}
  +  \frac13 \widetilde{a_{22}} \tilde{\eta}^2 \big)  + \big(2l\frac13 \widetilde{a_{22}} \tilde{\eta}^2- ( \widetilde{a_{11}}
   +  \widetilde{a_{22}})\big)\Big)\notag\\
&\quad\, + 2l  \kappa e^{-l(\tilde{\xi}^2+ \tilde{\eta}^2)}(2l\frac13{ \widetilde{a_{22}}} \tilde{\eta}^2) -2b  \widetilde{a_{22}}\notag\\
&> 2l \kappa e^{-l(\tilde{\xi}^2+ \tilde{\eta}^2)} \Big(l \tilde{\eta}_*^2\big(  \frac{1}{12} - \frac{|\tilde{\xi}|}{50(|\tilde{\xi}_*| + |\tilde{\eta}_*|)} \big)
  + \big((\frac{l}{6}\tilde{\eta}_*^2 -1) \widetilde{a_{22}} -  \widetilde{a_{11}}\big)\Big) \notag\\
&\quad\, + 2\big(\frac{l^2}{6}\kappa \tilde{\eta}_*^2 e^{-l((|\tilde{\xi}_*| + |\tilde{\eta}_*|)^2 + 4 \tilde{\eta}_*^2)} - b \big)  \widetilde{a_{22}}\notag\\
&>2l \kappa e^{-l(\tilde{\xi}^2+ \tilde{\eta}^2)} \big(l \tilde{\eta}_*^2(  \frac{1}{12} - \frac{1}{50} ) + \frac{l\tilde{\eta}_*^2}{24}- \frac12\big)\notag\\
&> 0.
\end{align}

Notice that $(e^{-l(\tilde{\xi}^2+ \tilde{\eta}^2)} - e^{-l R^2} ) \in (0,1)$ on arc $\partial B_{\varepsilon_*}(Q)\cap B_R(Q_0)$,
which implies
\begin{align}\label{PsiQ>kappaon}
   \widetilde{\Psi}^Q \leq  \kappa\qquad\,\, \text{on $\,\partial B_{\varepsilon_*}(Q)\cap B_R(Q_0)$}.
\end{align}
Thus, \eqref{psiQ>kappa}, \eqref{kappalb=}, and \eqref{PsiQ>kappaon} lead to
\begin{align}\label{boundary1=}
   \widetilde{\psi}^Q \geq  \widetilde{\Psi}^Q \qquad \text{on $\partial B_{\varepsilon_*}(Q)\cap B_R(Q_0)$}.
\end{align}

It follows from assumption \eqref{assumeAthm2x} that 
\begin{equation}\label{IandII}
\widetilde{\psi}^Q>-\epsilon(\tilde{\eta} - \tilde{\eta}_*)^2\qquad \mbox{on $\partial (B_{\varepsilon_*}(Q) \cap \Omega^+) \setminus  B_R(Q_0)$}.
\end{equation}

Since
 $\tilde{\xi}^2+ \tilde{\eta}^2 \geq R^2$ on arc  $\partial (B_{\varepsilon_*}(Q) \cap \Omega^+) \setminus B_R(Q_0)$, then
\begin{align}
 (e^{-l(\tilde{\xi}^2+ \tilde{\eta}^2)} - e^{-l R^2} ) \leq 0 \qquad \text{on $\partial(B_{\varepsilon_*}(Q) \cap \Omega^+) \setminus  B_R(Q_0)$}.
\end{align}
By \eqref{PsiQ==}, we have
\begin{equation}\label{Noteleq0}
\widetilde{\Psi}^Q \leq  -b (\tilde{\eta} - \tilde{\eta}_*)^2\qquad \mbox{on } \partial (B_{\varepsilon}(Q) \cap \Omega^+) \setminus B_R(Q_0).
\end{equation}

For the fixed constant $b$ given in \eqref{kappalb=}, let
\begin{align}\label{epsilon2=}
\epsilon_2 \defs \frac b2. 
\end{align}
Clearly, by \eqref{kappalb=}, $\epsilon_2$ depends only on $a_2$, $\varepsilon_*$, and $R$, so that depends only on $a_2$, $r$, 
the $C^{1,1}$-norm of $f$ on $(l_1,l_2)$, 
$\alpha$, and the $C^{1,\alpha}$-norm of $\psi^{Q}$ in $\overline{B_r(Q)\cap \Omega^+}$.
It follows from \eqref{IandII} and \eqref{Noteleq0}
that, for $\epsilon<\epsilon_2$,
\begin{align}\label{boundry2=}
 \widetilde{\psi}^Q \geq \widetilde{\Psi}^Q \qquad \text{on $\partial (B_{\varepsilon_*}(Q) \cap \Omega^+) \setminus B_R(Q_0)$}.
\end{align}

Let
\begin{align*}
  \epsilon_0 \defs \min\{ \epsilon_1^*, \epsilon_2\},
\end{align*}
where $\epsilon_1^*$ and $\epsilon_2$ are defined in \eqref{defsepsilon1} and \eqref{epsilon2=}, respectively.
Then it follows from \eqref{NPsiQ>0}, \eqref{boundary1=}, \eqref{boundry2=}, and the maximum principle that
\begin{align}\label{comparisonWcone}
 \widetilde{\psi}^Q - \widetilde{\Psi}^Q \geq 0 \qquad \text{in $B_{\varepsilon_*}(Q)\cap \Omega^+$}.
\end{align}
Since $(\widetilde{\psi}^Q-\widetilde{\Psi}^Q)(Q)=0$, \eqref{comparisonWcone} implies
\begin{align*}
 \frac{\dif \widetilde{\psi}^Q}{\dif {\mathbf{n}}}(Q)\geq  \frac{\dif \widetilde{\Psi}^Q}{\dif \mathbf{n}} (Q)= 2 l R \kappa e^{-lR^2} >0.
\end{align*}
This contradicts the fact that $\frac{\dif \widetilde{\psi}^Q}{\dif {\mathbf{n}}}(Q)=0$
by the definition of $\widetilde{\psi}^Q$ given by \eqref{psiQQIN} and \eqref{widetildepsiQ}.
Therefore, $(U,V)(Q)$ is a normal vector at the sonic point $Q$, \emph{i.e.}, the sonic point $Q$ is exceptional.
Finally, $\epsilon_0$ depends only on $a_2$, $r$, the $C^{1,1}$-norm of $f$ on $(l_1,l_2)$, 
$\alpha$, and the $C^{1,\alpha}$-norm of $\psi^{Q}$ in $\overline{B_r(Q)\cap \Omega^+}$,
because $\epsilon_1^*$ and $\epsilon_2$ depend only on these parameters.
\end{proof}

\section{Shock Reflection-Diffraction Problem with Non-Uniform State}
As an application of Theorems \ref{theorem1}--\ref{theoremcone},
we now study the sonic curves in the shock reflection-diffraction problem
with non-uniform incoming flow (see Fig. \ref{charac}).
In the self-similar coordinates $(\xi,\eta)$,
we consider the problem in the half-plane $\eta>0$ outside a wedge of angle $\theta_{\rm w}$, {\it i.e.}, in the domain:
\begin{align*}
  \Lambda\defs \{\xi\leq 0,\, \eta>0\}\cup \{\eta > \xi\tan\theta_{\rm w},\, \xi>0\}.
\end{align*}
The solutions satisfy the potential flow equation in the distributional sense in $\Lambda$,
the slip boundary condition on $\partial\Lambda$:
\begin{align}\label{4.2x}
  D\varphi \cdot \mbnu=0\qquad \text{on $\partial\Lambda$},
\end{align}
and the following asymptotic boundary condition at the infinity:
\begin{align*}
  \varphi \rightarrow \bar{\varphi} =
\begin{cases}
\varphi_0\quad \text{for $\xi>\xi_0, \eta<\xi\tan\theta_{\rm w}$},\\
\varphi_1\quad \text{for $\xi<\xi_0,\eta>0$},
\end{cases}
\qquad\mbox{as $\xi^2 + \eta^2 \rightarrow \infty$}.
\end{align*}

\begin{figure}[!h]
	\centering
	\includegraphics[width=0.35\textwidth]{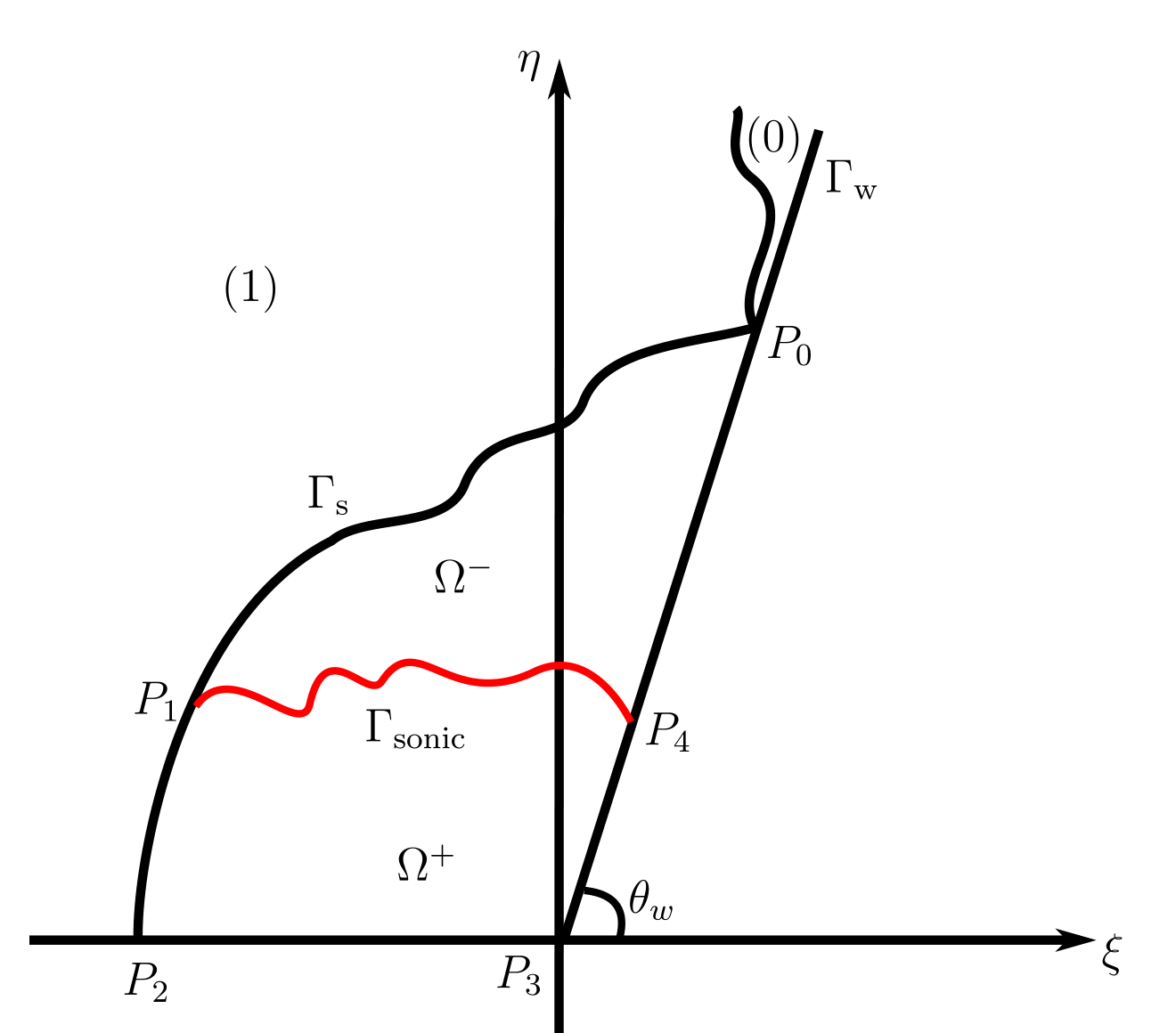}
	\caption{Shock reflection-diffraction problem with non-uniform states. \label{charac}}
\end{figure}

For the regular shock reflection with uniform upstream flow, which we refer to as uniform state (1),
the states along $\Gamma_{\rm sonic}$ are constant, as established
in \cite{Bae-Chen-Feldman, Chen-Feldman2,Chen-Feldman3,Chen-Feldman4,Chen-Feldman5,Chen-Feldman-Xiang},
and $D\varphi^+(Q)$ is the normal to the sonic arc at $Q\in\Gamma_{\rm sonic}$.
Let $\Omega^-$ and $\Omega^+$ denote, respectively, the pseudo-supersonic
and pseudo-subsonic regions of the solution $\varphi$ between the reflected
shock $P_0P_2$ and the wedge boundary $P_0P_3$ (see Fig. \ref{charac}).
The existence of a solution $\varphi$ of this problem was established in \cite{Chen-Feldman4}.
Such a solution consists of a uniform state $\varphi^-$ in $\Omega^-$ and a state $\varphi^+$ in $\Omega^+$,
satisfying
\begin{enumerate}
\item In $ \Omega^-$,
\begin{equation}\label{wsupassc22}
\mbox{$\qquad\,$ $\varphi^-$ is supersonic and satisfies Eqs. \eqref{self-similar}--\eqref{rhoeqself} with $\varphi^-\in C^2(\overline{\Omega^-}\backslash\{P_1\})$.}
\end{equation}
Denote $(\partial_{\xi} \varphi^-,\partial_{\eta} \varphi^-)=(U_-, V_-)$.

\smallskip
\item In $ \Omega^+ $,
\begin{equation}\label{subassc22b}
\begin{split}
&\mbox{ 
$\varphi^+$ is subsonic and satisfies Eqs. \eqref{self-similar}--\eqref{rhoeqself}} \\
&\qquad\mbox{ with $ \varphi^+\in C^2(\overline{\Omega^+}\backslash\{P_1,P_3\})
\cap C^{1,\alpha}(\overline{\Omega^+})$},
\end{split}
\end{equation}
for some $\alpha\in (0,1)$.
Denote $(\partial_{\xi} \varphi^+,\partial_{\eta} \varphi^+)=(U_+, V_+)$.

\smallskip
\item 
The sonic curve is connected.
On the sonic curve $\Gamma_{\rm sonic}$, the continuity conditions hold:
\begin{align}\label{conti2r}
  \varphi^+ = \varphi^-, \quad D \varphi^+  =D \varphi^- \qquad\,\, \text{on $\Gamma_{\rm sonic}$}.
\end{align}
\item On the shock front $\Gamma_{\rm shock}$, the following entropy conditions hold:
\begin{align}\label{entropyC}
 D \varphi_1 \cdot \mathbf{n}_{\rm s}> D\varphi\cdot \mathbf{n}_{\rm s}>0, \qquad \rho> \rho_1>0,
\end{align}
and the following Rankine-Hugoniot (R-H) conditions and Bernoulli law hold:
\begin{align}
  &\varphi = \varphi_1,\label{iRH1}\\
  &\rho D \varphi\cdot \mathbf{n}_{\rm s}  = \rho_1 D \varphi_1 \cdot \mathbf{n}_{\rm s},\label{iRH2}\\
  &\frac{|D\varphi|^2}{2} + \rho^{\gamma-1}  = \frac{|D\varphi_1|^2}{2} + \rho_1^{\gamma-1},\label{iRH3}
\end{align}
where $\varphi=\varphi^-$ in $\overline{\Omega^-}$ and $\varphi=\varphi^+$ on $\overline{\Omega}^+$.
\item $\Gamma_{\rm sonic}$ and $\Gamma_{\rm shock}$ meet at a unique point $P_1$ such that
\begin{equation}\label{4.11x}
\mbox{$P_1$ is not on $\Gamma_{\rm w}$ or the $\xi$--axis.}
\end{equation}
\item
The slip boundary condition \eqref{4.2x} holds on $\partial\Lambda=\Gamma_{\rm w}\cup\Gamma_{\rm sym}$,
where $\Gamma_{\rm w}$ is the wedge-boundary and $\Gamma_{\rm sym}$ is the symmetry line $\{\eta=0,\,\xi\leq0\}$.
\end{enumerate}

In this section, we analyze the sonic curve in this problem with a non-uniform state,
when the solutions are a small perturbation of the solution given in \cite{Chen-Feldman4}.
More precisely, 
let solution $\varphi$ of this problem satisfy
\eqref{4.2x} and \eqref{wsupassc22}--\eqref{4.11x}.
Then we establish two theorems as presented below. The first concerns the structure of the sonic curve 
based on the properties of the solution from the subsonic side.

\begin{thm}\label{thmappsub}
\begin{enumerate}[\rm (i)]

\item\label{thmappsub-i1}
Assume that conditions \eqref{wsupassc22}--\eqref{conti2r} hold, and fix
$\alpha\in(0,1)$. Let $Q\in \Gamma_{\rm sonic}^0$ be such that

\begin{itemize}
\item[(a)] There exist $r>0$ and $f\in C^{1,1}(\mathbb R)$ for which
\eqref{soniccurve2} is satisfied in $B_r(Q)$. By \eqref{subassc22b}, reducing $r$ if necessary, 
we may assume that $\varphi\in C^2(\overline{\Omega^+\cap B_r(Q)});$

\item[(b)] The sonic speed satisfies $c(Q)>0;$

\item[(c)] In the rotated coordinate system
$(\tilde{\xi},\tilde{\eta})$, whose $\tilde{\xi}$-axis is aligned with
$D\varphi^+(Q)$, there exist constants $a_2>0$ and $\epsilon\ge0$
such that
\begin{align}\label{assumeAthm2}
\varphi^+_{\tilde{\xi}\tilde{\xi}}(Q)+1\ge 4a_2>0,
\qquad
|\varphi^+_{\tilde{\xi}\tilde{\eta}}(Q)|
+
|\varphi^+_{\tilde{\eta}\tilde{\eta}}(Q)+1|
\le \frac{\epsilon}{4}.
\end{align}
\end{itemize}

Let $\theta_1$ denote the angle between $D\varphi^+(Q)$ and the inner
normal to $\Gamma_{\rm sonic}$ at $Q$. Let $\epsilon_0$ and
$\varepsilon_1$ be the constants provided by {\rm Theorem
\ref{theoremcone}}, determined by $a_2$, $r$, $\alpha$, the
$C^{1,1}$-norm of $f$, and
$\|\varphi\|_{C^{1,\alpha}(\overline{\Omega^+\cap B_r(Q)})}$.

If
\[
\epsilon<\min\{\epsilon_0,\,3a_2\}
\qquad\text{and}\qquad
|\theta_1|<\varepsilon_1,
\]
then $Q$ is an exceptional point.

\vspace{3pt}
\item\label{thmappsub-i2}
Assume that the hypotheses of part~\eqref{thmappsub-i1} are satisfied at every
point $Q\in\Gamma_{\rm sonic}^0$, where the constants $r$, $a_2$, and
$\varepsilon$ are allowed to depend on $Q$. Then the sonic curve
$\Gamma_{\rm sonic}$ is a circular arc.
\end{enumerate}
\end{thm}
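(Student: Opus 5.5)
Part (i) reduces to Theorem \ref{theoremcone}. All that has to be checked is that hypothesis \eqref{assumeAthmcone} holds in $B_{r'}(Q)\cap\Omega^+$ for some $r'\in(0,r]$, with constants $a_2$ and $\epsilon$ of the required type. We work in the rotated coordinates $(\tilde\xi,\tilde\eta)$ with $\tilde\xi$ along $D\varphi^+(Q)=(U,V)(Q)=c(Q)(\cos\sigma_*,\sin\sigma_*)$. In these coordinates $\tilde\xi-\tilde\xi_*=(\xi-\xi_*)\cos\sigma_*+(\eta-\eta_*)\sin\sigma_*$ and $\tilde\eta-\tilde\eta_*=(\eta-\eta_*)\cos\sigma_*-(\xi-\xi_*)\sin\sigma_*$, which are exactly the combinations appearing in \eqref{assumeAthmcone}.

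By \eqref{phi-2-Q}, $D^2\varphi_2^Q=-I$, so $D^2\psi^Q=D^2\varphi^++I$. Together with \eqref{conti2} this gives $\psi^Q(Q)=0$ and $D\psi^Q(Q)=0$. Hypothesis (c) then says that the Hessian $H:=D^2\psi^Q(Q)$ in the rotated frame satisfies $H_{11}\ge 4a_2$ and $|H_{12}|+|H_{22}|\le\epsilon/4$. Using $2|H_{12}xy|\le |H_{12}|(\delta x^2+\delta^{-1}y^2)$ with $\delta=1$, we obtain
\[
\tfrac12 H(x,y)\cdot(x,y)\ \ge\ \big(2a_2-\tfrac{\epsilon}{8}\big)x^2-\tfrac{\epsilon}{4}\,y^2 .
\]
Since $\epsilon<3a_2$, the coefficient of $x^2$ is at least $2a_2-\tfrac{3a_2}{8}>\tfrac{3}{2}a_2$.

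Next we pass from the Hessian at $Q$ to $\psi^Q$ itself. By (a), $\varphi^+\in C^2(\overline{\Omega^+\cap B_r(Q)})$, so $D^2\psi^Q$ is uniformly continuous there. The region $\Omega^+\cap B_{r'}(Q)$ is not convex, but for $r'$ small the segment from $Q$ to any point of $\Omega^+\cap B_{r'}(Q)$ lies in $\overline{\Omega^+}$, up to a modification controlled by the $C^{1,1}$ bound on $f$. An alternative, and cleaner, route is to extend $\psi^Q$ to a $C^2$ function across $\Gamma_{\rm sonic}$ near $Q$ (Whitney extension). Either way, Taylor's formula gives
\[
\psi^Q(P)=\tfrac12 H(P-Q)\cdot(P-Q)+o(|P-Q|^2).
\]
Choosing $r'$ so that the remainder is at most $\tfrac{a_2}{2}(x^2+y^2)$, we get
\[
\psi^Q>a_2x^2-\epsilon y^2\qquad\text{in }B_{r'}(Q)\cap\Omega^+\setminus\{Q\},
\]
with $x=\tilde\xi-\tilde\xi_*$ and $y=\tilde\eta-\tilde\eta_*$. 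The $y^2$ coefficient here is $\tfrac{\epsilon}{4}+\tfrac{a_2}{2}$, and this must be at most $\epsilon$, which fails when $\epsilon$ is small relative to $a_2$.

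This last point is the main obstacle: the modulus-of-continuity error is isotropic and cannot be absorbed into the small parameter $\epsilon$. I would first try shrinking $r'$ so that the remainder is at most $\tfrac{\epsilon}{2}(x^2+y^2)$. This is legitimate when $\epsilon>0$, since the smallness is only relative to the fixed numbers $\epsilon$ and $a_2$. The coefficients then become $x^2$: $2a_2-\tfrac{5\epsilon}{8}>a_2$ and $y^2$: $\tfrac{3\epsilon}{4}<\epsilon$. If $\epsilon=0$, I would replace it by a tiny positive $\epsilon'<\min\{\epsilon_0,3a_2\}$, which still satisfies (c). Theorem \ref{theoremcone} is then applied with $r'$ in place of $r$; the constants it produces depend on $r'$, but $r'$ is fixed by the continuity of $D^2\varphi^+$, and I would verify that the statement's quantification of $\epsilon_0$ and $\varepsilon_1$ through $r$ tolerates this. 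Since $c(Q)>0$ and $|\theta_1|<\varepsilon_1$, Theorem \ref{theoremcone} yields that $Q$ is exceptional.

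For part (ii), part (i) makes every point of $\Gamma_{\rm sonic}^0$ exceptional. If $c>0$ on the whole curve, which hypothesis (b) provides pointwise, then the sonic curve is connected by assumption and Theorem \ref{theorem1}(ii) (Lemma \ref{arc}) gives $c\equiv c_2$ on $\Gamma_{\rm sonic}$ and places $\Gamma_{\rm sonic}$ on a circle of radius $c_2$. Hence $\Gamma_{\rm sonic}$ is a circular arc. The local constants being allowed to vary with $Q$ is harmless, because Lemma \ref{arc} is itself a pointwise-local argument patched together by connectedness.
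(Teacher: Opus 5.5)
Your argument follows the paper's proof: you extend $\psi^Q$ to a $C^2$ function across $\Gamma_{\rm sonic}$ near $Q$, Taylor-expand at $Q$ using $\psi^Q(Q)=0$, $D\psi^Q(Q)=0$ and continuity of $D^2\psi^Q$ to get \eqref{assumeAthm2x} on a small ball, then invoke Theorem~\ref{theoremcone} for (i) and Lemma~\ref{arc} for (ii). The paper perturbs each second derivative separately near $Q$ (to $\ge 3a_2$ and $\le\epsilon/3$), which is exactly where the threshold $3a_2$ comes from, and it passes silently over both the case $\epsilon=0$ and the replacement of $r$ by the smaller radius, which you rightly flag.

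One arithmetic fix is needed. With the remainder bounded by $\frac{\epsilon}{2}(x^2+y^2)$, the $x^2$-coefficient $2a_2-\frac{5\epsilon}{8}$ exceeds $a_2$ only when $\epsilon<\frac{8}{5}a_2$, and $\epsilon<3a_2$ does not guarantee this. Bound the remainder by $\frac12\min\{a_2,\epsilon\}(x^2+y^2)$ instead, and both coefficients then come out right.
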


\begin{rem}\label{rem-8-1}
For the regular shock reflection with uniform state $(1)$,
the solution in $\Omega^-$ is a uniform state, denoted by state $(2)$, with potential
$\varphi_2$.
Hence, $(U, V)=(u_2-\xi, v_2-\eta)$ in $\Omega^-$, where $u_2$ and $v_2$ are positive constants and $(u_2, v_2)\in P_3P_4$ by \eqref{4.2x}.
The sonic curve
$\Gamma_{\rm sonic}$ is therefore an arc of the circle centered at $(u_2, v_2)$ with radius $c_2$,
where $c_2$ is the constant speed of sound of state $(2)$.

Furthermore, by \eqref{conti2r}, $\varphi_2^Q=\varphi_2$ for every
$Q\in\Gamma_{\rm sonic}$, where $\varphi_2^Q$ is defined by
\eqref{phi-2-Q}. Defining
\[
\psi:=\varphi-\varphi_2,
\]
it follows that $\psi^Q=\psi$ for every $Q\in\Gamma_{\rm sonic}$, where
$\psi^Q$ is defined by \eqref{psiQQIN}. In addition, \eqref{conti2r}
implies that $D\psi=0$ on $\Gamma_{\rm sonic}$.

Denote by $(r, \theta)$ the polar coordinates with respect to
the center  $(u_2, v_2)$ of the arc $\Gamma_{\rm sonic}$. It follows from 
 {\rm \cite{Bae-Chen-Feldman}} that
\begin{equation}\label{secondDerivOnSonic-Unpert}
\psi_{rr}=\frac{1}{\gamma+1},\quad \psi_{r\theta}=\psi_{\theta\theta}=0
\qquad\mbox{on }\; \Gamma_{\rm sonic}.
\end{equation}
Since $D\varphi=D\varphi_2$ on $\Gamma_{\rm sonic}$, the gradient
$D\varphi$ is directed along the radial direction there. Therefore,
for any $Q\in\Gamma_{\rm sonic}^0$, working in the coordinates
$(\tilde\xi,\tilde\eta)$ introduced in
{\rm Theorem~\ref{thmappsub}\eqref{thmappsub-i1}}, we obtain from
\eqref{secondDerivOnSonic-Unpert} and the identity $D\psi=0$ on
$\Gamma_{\rm sonic}$ that
\begin{equation*}
\varphi_{\tilde\xi\tilde\xi}
=
\frac{1}{\gamma+1}-1,
\quad\,\,
\varphi_{\tilde\xi\tilde\eta}=0,
\quad\,\,
\varphi_{\tilde\eta\tilde\eta}=-1\qquad\,\,\, \mbox{at $Q$}.
\end{equation*}

Consequently, the hypotheses of
{\rm Theorem~\ref{thmappsub}\eqref{thmappsub-i1}} are satisfied at every
point $Q\in\Gamma_{\rm sonic}^0$ with
\[
a_2=\frac{1}{4(\gamma+1)},
\qquad
\varepsilon=0.
\]
Therefore, {\rm Theorem~\ref{thmappsub}\eqref{thmappsub-i2}} implies that
$\Gamma_{\rm sonic}$ is a circular arc, in agreement with both the
results of the aforementioned references and experimental observations.
\end{rem}

\begin{rem}
As shown in {\rm Remark \ref{rem-8-1}},
the standard regular reflection solution with a uniform incident state
constructed in
{\rm \cite{Bae-Chen-Feldman, Chen-Feldman2, Chen-Feldman3,
Chen-Feldman4, Chen-Feldman5, Chen-Feldman-Xiang}}
satisfies the assumptions of {\rm Theorem~\ref{thmappsub}}. Therefore,
{\rm Theorem~\ref{thmappsub}} implies that any sufficiently small perturbation
of this solution for which the assumptions of
{\rm Theorem~\ref{thmappsub}\eqref{thmappsub-i1}} remain valid at every $Q\in \Gamma_{\rm sonic}^0$ 
 should also have a sonic curve that is a
circular arc.
\end{rem}

\begin{rem}\label{rem4.3x}
As indicated in {\rm Remark \ref{rem2.1x}}, based on {\rm Theorem \ref{theorem1}},
we may apply {\rm Theorem 3.1} in {\rm \cite[Page 517]{Bae-Chen-Feldman}}.
Then, if a solution exists with a non-uniform upstream flow that is a small perturbation of the standard regular reflection solution,
then the sonic curve must be an arc, the velocity and the density on the sonic arc coincide with those of the uniform state determined by this arc
{\rm (}which we call it state $(2)${\rm )}, and the solution is $C^{2,\alpha}$ in $\Omega^+$ up to the sonic arc, except at the point $P_1$.

In fact, by {\rm Theorem \ref{thmappsub}}, all sonic points are exceptional. 
Consequently,
$\Gamma_{\rm sonic}$ is an arc, and the vector field $(U,V)$ is directed along the radius of the circle containing $\Gamma_{\rm sonic}$.
Let $(u_2, v_2)$ denote the center of this circle.
Then $(U, V)$ can be written as
\begin{align}\label{UV=}
  (U,V) = \lambda(u_2-\xi, v_2-\eta)
\end{align}
for some scalar function $\lambda$.
By {\rm Theorem \ref{theorem1}},
at every point $Q\in\Gamma_{\rm sonic}$,
either $c(Q)=0$ or $\partial_{\boldsymbol{\tau}}c(Q)=0$.
Since $c>0$ on $\Gamma_{\rm sonic}$, it follows that $c=c_2$ on $\Gamma_{\rm sonic}$
for some constant $c_2>0$.
Substituting \eqref{UV=} into $U^2+V^2=c_2^2$ on $\Gamma_{\rm sonic}$ yields
 \begin{align}\label{circle8.12x}
 c^2_2=(u_2-\xi)^2 + (v_2-\eta)^2 = \frac{c_2^2}{\lambda^2}\qquad\mbox{on }\Gamma_{\rm sonic}.
 \end{align}
By {\rm Theorem \ref{theorem1}}, $c_2$ is precisely the radius of the sonic arc $\Gamma_{\rm sonic}$.
Thus,
 $\lambda=1$ and $  (U,V) = (u_2-\xi, v_2-\eta)$.
It follows that $\partial_{\tau}\varphi^+=(U,V)\cdot\tau=0$ on $\Gamma_{\rm sonic}$, which implies that $\varphi^+$ is a constant
on $\Gamma_{\rm sonic}$.
Define
\begin{align}\label{defpsix}
\psi\defs \varphi^+-\big(-\frac{1}{2} (\xi^2 +\eta^2) + u_2\xi + v_2\eta+K_0\big) \qquad\mbox{in $\Omega^{+}$},
\end{align}
where constant $K_0$ is chosen so that $\psi=0$ on $\Gamma_{\rm sonic}$.
It follows directly that $\psi=0$ and $D\psi=0$ on $\Gamma_{\rm sonic}$.
Let $(\xi,\eta) = (r\cos\theta, r\sin\theta)$ and $(x,y)=(c_2 - r,\theta)$.
Then $\Gamma_{\rm sonic}\subset\{(x,y)\,:\, x=0,\,y\in(l_3,l_4)\}$ for some constants $l_3$ and $l_4$.
Suppose that $\psi$ satisfies assumptions {\rm (3.3)}, {\rm (3.6)--(3.7)}, and {\rm (3.9)} of {\rm Theorem 3.1}
in {\rm \cite[Page 517]{Bae-Chen-Feldman}}. Then
 $\psi\in C^{2,\alpha}((\Omega^+\cup\Gamma_{\rm sonic} )\backslash \{P_1\}))$
and the one-sided derivatives from the subsonic side are $\psi_{xx}(0,y)=\frac{1}{\gamma+1}$
and $\psi_{xy}(0,y)=\psi_{yy}(0,y)=0$ for $y\in(l_3,l_4)$.
In fact, assumption {\rm (3.3)} in {\rm \cite{Bae-Chen-Feldman}} is verified in {\rm Lemma \ref{lem4.1xw}} below.
\end{rem}

The second theorem is related to the hyperbolic region.
Before stating the theorem, we define $\bar{\partial}^{\pm}$ as
\begin{align}\label{barpartial+-}
  \bar{\partial}^+ \defs  \cos\alpha\,\partial_{\xi }+\sin \alpha\,\partial_{\eta},
  \qquad \bar{\partial}^- \defs \cos\beta\,\partial_{\xi }+\sin \beta\,\partial_{\eta},
\end{align}
where
\begin{align}\label{cangle}
  \tan\alpha \defs \frac{UV + c\sqrt{U^2 +V^2 -c^2}}{U^2 -c^2},\qquad \tan \beta \defs  \frac{UV - c\sqrt{U^2 +V^2 -c^2}}{U^2 -c^2}
\end{align}
are the eigenvalues of equations \eqref{1}--\eqref{1.1}, \emph{i.e.},
 \begin{align}\label{reeq}
   (u_{\xi},v_{\xi})^{\top} + A(U,V,c)(u_{\eta},v_{\eta})^{\top} = (0,0)^{\top},
 \end{align}
where
\[
A(U,V,c) = \begin{pmatrix}
  -\frac{2UV}{c^2 -U^2} & \frac{c^2 -V^2}{c^2-U^2}  \\
  -1& 0
\end{pmatrix}.
\]

\begin{thm}\label{thmapp}
For any solution satisfying \eqref{4.2x} and conditions \eqref{wsupassc22}--\eqref{4.11x},
and for any sonic point $Q= (\xi_*,f(\xi_*))$,
let ${\psi}^Q$ be defined in \eqref{psiQQIN}.
Assume that either $\bar{\partial}^{\pm} {c}\geq 0$ or $\bar{\partial}^{\pm} {c}\leq 0$
on $\Gamma_{\rm shock}\cap\overline{\Omega^-}$.
Assume further that there exist a constant $0<a_1 < \frac{1}{\gamma+1}$ and a sufficiently small constant $\varepsilon>0$, both independent of $Q$,
such that
\begin{align}\label{Qnpsi<}
\psi^Q(Q + t\mathbf{n}_Q) < a_1 \big(\frac{\mathbf{n}_Q\cdot(U,V)(Q)}{c}\big)^2t^2,
\end{align}
for all $t \in (0, \varepsilon)$,
where $\mathbf{n}_Q$ denotes the unit normal to $\Gamma_{\rm sonic}$ at $Q$ pointing into $\Omega^-$.
Then the sonic curve $P_1P_4$ shown in {\rm Fig. \ref{charac}} must lie on a circle.
\end{thm}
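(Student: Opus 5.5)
The plan is to show that every point $Q$ of $\Gamma_{\rm sonic}$ is exceptional and then invoke Theorem \ref{theorem1}(ii). By Theorem \ref{theoremnew}, at a sonic point $Q$ with $\partial^{\perp}c(Q)=0$, assumption \eqref{Qnpsi<} (the case $t\in(0,\varepsilon)$ of \eqref{npsi<}) already forces $Q$ to be exceptional. So the task reduces to proving $\partial^{\perp}c=0$ on the whole sonic curve $P_1P_4$, using the sign assumption on $\bar{\partial}^{\pm}c$ along $\Gamma_{\rm shock}\cap\overline{\Omega^-}$ and the hyperbolic structure in $\Omega^-$. Once every sonic point is exceptional, the sonic curve is connected by assumption, and $c>0$ on it (note $c=0$ on a sonic point would give $D\varphi=0$; we exclude this by the entropy condition $\rho>\rho_1>0$ propagated through $\Omega^-$). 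Theorem \ref{theorem1}(ii) then gives that $c\equiv c_2$ and that $P_1P_4$ lies on a circle of radius $c_2$.

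To get $\partial^{\perp}c=0$, I would work with the characteristic decomposition in $\Omega^-$ for \eqref{reeq}. First I would derive transport equations of the form
\[
\bar{\partial}^{\mp}(\bar{\partial}^{\pm}c)=\mathcal{A}^{\pm}\,(\bar{\partial}^{\pm}c)^2+\mathcal{B}^{\pm}\,\bar{\partial}^{+}c\,\bar{\partial}^{-}c ,
\]
in the spirit of the characteristic decompositions used for pseudo-supersonic patches (see \cite{LaiSheng,LiZheng}). These are differentiated forms of \eqref{1}--\eqref{2}, using \eqref{Berxi=}--\eqref{Bereta=} to eliminate $u,v$ derivatives in favour of $c$ and the angles $\alpha,\beta$. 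From these decompositions an invariant-region argument shows the following: if $\bar{\partial}^{\pm}c$ have a common sign on $\Gamma_{\rm shock}\cap\overline{\Omega^-}$, the sign is preserved along characteristics throughout $\Omega^-$. The characteristics issuing from the shock sweep $\Omega^-$ up to $\Gamma_{\rm sonic}$, using the wedge and symmetry boundary with \eqref{4.2x} for reflections if needed. Hence $\bar{\partial}^{\pm}c\ge0$ (or $\le0$) up to $\Gamma_{\rm sonic}$.

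On $\Gamma_{\rm sonic}$ the two characteristic directions coalesce. By \eqref{cangle}, $\tan\alpha=\tan\beta=UV/(U^2-c^2)=-U/V$, so $\bar{\partial}^{+}=\bar{\partial}^{-}$ is parallel to $\partial^{\perp}=V\partial_\xi-U\partial_\eta$. Thus the limit of $\bar{\partial}^{\pm}c$ at $Q$ equals $\pm\partial^{\perp}c(Q)/c$, and it carries the preserved sign. On the other hand, the Case I/II analysis \eqref{perpc<00}, \eqref{caseII} gives the opposite-sign inequality for $\partial^{\perp}c$, because $\partial^\perp$ points toward $\Omega^-$ (Case I) and $q^2-c^2$ increases into $\Omega^-$. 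I expect the argument to go as follows: the characteristic decomposition degenerates at the sonic curve (coefficients carrying $1/\sqrt{q^2-c^2}$). The resulting blow-up is compatible with $C^2(\overline{\Omega^-})$ only if $\bar{\partial}^{\pm}c\to0$. Combining the two sign constraints then yields $\partial^{\perp}c(Q)=0$ at every sonic point.

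The main obstacle is this last step: controlling the singular coefficients of the characteristic decomposition near $\Gamma_{\rm sonic}$, and extracting $\partial^{\perp}c=0$ rather than just a one-sided inequality. First I would integrate the Riccati-type equation for $\bar{\partial}^{\pm}c$ along a characteristic approaching $Q$. I would show that a nonzero limit of the sign that the shock data propagates contradicts the boundedness of second derivatives guaranteed by \eqref{wsupassc22} (away from $P_1$). The point $P_1$ itself is handled by continuity of the exceptional property along the connected curve, since exceptional points are characterized by $\partial_{\mbtau}c=0$ (Theorem \ref{theorem1}(i)).
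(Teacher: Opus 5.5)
Your reduction matches the paper's: all sonic points are exceptional via Theorem \ref{theoremnew} once $\partial^\perp c=0$, then Theorem \ref{theorem1} applies, and the sign of $\bar\partial^\pm c$ is propagated through $\Omega^-$ by the characteristic decomposition. However, the decisive step is missing, and the mechanism you propose for it does not work.

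The missing point is the \emph{orientation} of the two characteristic derivatives at a sonic point. There $\sin\delta=c/q=1$, so $\delta=\frac{\pi}{2}$, $\alpha=\sigma+\frac{\pi}{2}$ and $\beta=\sigma-\frac{\pi}{2}$. Hence $-\bar\partial^+=\bar\partial^-=\frac{1}{c}\partial^\perp$ on $\Gamma_{\rm sonic}$: the characteristic lines coincide, but their unit directions are opposite. Your assertion ``$\bar\partial^+=\bar\partial^-$'' is false for oriented derivatives, and if it were true the sign information would be useless. Your next formula, $\pm\partial^\perp c(Q)/c$, is correct up to an orientation convention, and it already finishes the proof. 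Since $c\in C^1(\overline{\Omega^-})$, both $\bar\partial^+c$ and $\bar\partial^-c$ extend continuously to $\Gamma_{\rm sonic}$ with the common sign they have in $\Omega^-$ (Lemma \ref{+-barc>=0}). This gives both $\partial^\perp c\ge 0$ and $\partial^\perp c\le 0$, so $\partial^\perp c=0$. No Case I/II input and no analysis of singular coefficients is needed.

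The route you actually propose would not close.
\begin{itemize}
\item The inequalities \eqref{perpc<00} and \eqref{caseII} have a sign fixed by which side of $\Gamma_{\rm sonic}$ contains $\Omega^-$, not by the sign of the shock data. Combined with only one of $\bar\partial^\pm c$, they give nothing whenever the two signs agree.
\item The blow-up heuristic is unfounded. The identities \eqref{c+-}--\eqref{c-+} involve second derivatives of $c$, i.e.\ third derivatives of $\varphi$, which $C^2(\overline{\Omega^-})$ does not control.
\item Smooth solutions need not have $\bar\partial^\pm c\to 0$ at the sonic curve. For the analytic solution of Lemma \ref{sonicarclem} one finds $v_\eta=0$ and $u_\eta=-\cot\theta$ on the arc. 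Hence $\partial^\perp c=-Uc_\eta=-\frac{\gamma-1}{2}\mu^2 r_0\cos\theta\cot\theta\neq 0$ there.
\end{itemize}

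There is also a smaller gap in the propagation step. $\bar\partial^+c$ is transported along $\Gamma_-$ characteristics, and in $\Omega^-$ these emanate from the wedge $\Gamma_{\rm w}$, not from the shock. So you need the sign of $\bar\partial^+c$ on $\Gamma_{\rm w}$. The paper obtains it from the slip condition. Since $\sigma$ is constant on $\Gamma_{\rm w}$, $U\sigma_\xi+V\sigma_\eta=0$ there, so $\partial^\perp c=0$ on $\Gamma_{\rm w}$ by \eqref{perpc==}. Then \eqref{+-ccperpc} gives $\bar\partial^+c=\bar\partial^-c$ on $\Gamma_{\rm w}$, and the sign of $\bar\partial^-c$ is already known. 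Your ``reflections if needed'' should be replaced by this argument.
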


\begin{rem}
For the regular shock reflection with uniform state $(1)$,
\emph{i.e.}, when $(u_-, v_-, \rho_-)$ are constant so that $\psi^{Q}\equiv0$,
the corresponding solution has been analyzed
in {\rm \cite{Bae-Chen-Feldman, Chen-Feldman2,Chen-Feldman3,Chen-Feldman4,Chen-Feldman5,Chen-Feldman-Xiang}}.
In this case, $\partial^\pm c=0$ on $\Gamma_{\rm shock}\cap\overline{\Omega^-}$,
since the derivatives of the sonic speed along
the characteristic directions vanish throughout $\Omega^-$.
Therefore, all the hypotheses of {\rm Theorem \ref{thmapp}} are satisfied.
It follows that $\Gamma_{\rm sonic}$ must be a circular arc, which is consistent with both the results established
in the aforementioned references and the corresponding experimental observations.
\end{rem}

We now prove Theorems \ref{thmappsub}--\ref{thmapp} in order in this section.
First, Theorem  \ref{thmappsub} is proved based on Lemma \ref{arc} and Theorem \ref{theoremcone}.

\begin{proof}[Proof of {\rm Theorem \ref{thmappsub}}]
We first prove part \eqref{thmappsub-i1} of the Theorem.

We show that the sonic point $Q$ is exceptional by applying Theorem \ref{theoremcone}.
In order to apply Theorem \ref{theoremcone}, it suffices to show \eqref{assumeAthmcone},
which takes form \eqref{assumeAthm2x} in the coordinates
introduced in \eqref{defcoordinate}. Thus, it suffices to show \eqref{assumeAthm2x}.

It is direct to see that the  $(\tilde{\xi},\tilde{\eta})$--coordinates given in Theorem \ref{thmappsub}
are the same as the ones introduced in \eqref{defcoordinate}.
Thus, for $\widetilde{\psi}^Q$ defined by \eqref{widetildepsiQ}, it follows from \eqref{assumeAthm2} that the derivatives
from the $\Omega^+$--side satisfy
\begin{equation*}
 \widetilde{\psi}^Q_{\tilde{\xi}\tilde{\xi}}(Q)\geq 4a_2 >0,\qquad\,
 |\widetilde{\psi}^Q_{\tilde{\xi}\tilde{\eta}}(Q)|+| \widetilde{\psi}^Q_{\tilde{\eta}\tilde{\eta}}(Q)|\leq \frac{\epsilon}{4}.
\end{equation*}
Since $\varphi\in C^2(\overline{\Omega^+\cap B_r(Q)})$, as noted in the statement of the theorem, it follows that
$\widetilde{\psi}^Q\in C^2(\overline{\Omega^+\cap B_r(Q)})$.
Therefore, $\widetilde{\psi}^Q$ admits a $C^2$-extension across the sonic curve to a function in
$C^2(\overline{B_r(Q)})$.
In the remainder of the proof, we work with a fixed such extension.
Then there exists a constant $\tilde{\varepsilon}<\varepsilon$ small such that,
for each $\widehat{Q}\in \overline{(B_{\tilde{\varepsilon}}(Q))}$,
\begin{equation*}
 \widetilde{\psi}^Q_{\tilde{\xi}\tilde{\xi}}(\widehat{Q})\geq 3a_2 >0,\qquad
 | \widetilde{\psi}^Q_{\tilde{\xi}\tilde{\eta}}(\widehat{Q})|+| \widetilde{\psi}^Q_{\tilde{\eta}\tilde{\eta}}(\widehat{Q})|\leq \frac{\epsilon}{3}.
\end{equation*}

By \eqref{psiQQIN}, we see that $\widetilde{\psi}^Q(Q)=0$ and $D\widetilde{\psi}^Q(Q)=0$.
Thus, for each $(\tilde\xi, \tilde\eta) \in B_{\tilde \varepsilon}(Q)\cap \Omega^+$ there exist $\widehat{Q}\in {B_{\tilde{\varepsilon}}(Q)}$ depending on $(\tilde{\xi},\tilde{\eta})$ such that
\begin{align*}
 \widetilde{\psi}^Q(\tilde{\xi},\tilde{\eta})
 &= \frac12 \widetilde{\psi}_{\tilde{\xi}\tilde{\xi}}^Q(\widehat{Q}) (\tilde{\xi} - \tilde{\xi}_*)^2
 + \frac12  \widetilde{\psi}_{\tilde{\eta}\tilde{\eta}}^Q (\widehat{Q})(\tilde{\eta} - \tilde{\eta}_*)^2
 +  \widetilde{\psi}_{\tilde{\xi}\tilde{\eta}}^Q (\widehat{Q})(\tilde{\xi} - \tilde{\xi}_*)(\tilde{\eta}- \tilde{\eta}_*)\\
&\geq \frac32 a_2 (\tilde{\xi} - \tilde{\xi}_*)^2 - \frac16\epsilon  (\tilde{\eta} - \tilde{\eta}_*)^2
-\frac{1}{3}\epsilon |(\tilde{\xi} - \tilde{\xi}_*)(\tilde{\eta}- \tilde{\eta}_*)|\\
&\geq (\frac32 a_2 -\frac16\epsilon) (\tilde{\xi} - \tilde{\xi}_*)^2 - \frac13\epsilon  (\tilde{\eta} - \tilde{\eta}_*)^2 \\
&> a_2 (\tilde{\xi} - \tilde{\xi}_*)^2 - \epsilon  (\tilde{\eta} - \tilde{\eta}_*)^2,
\end{align*}
if $\frac12 a_2 -\frac16\epsilon>0$. Therefore, \eqref{assumeAthm2x} holds if $a_2>\frac13\epsilon$.

This completes the proof of part \eqref{thmappsub-i1} of the Theorem.

\medskip
We now prove part \eqref{thmappsub-i2}. Since the hypotheses of
part \eqref{thmappsub-i1} hold at every point
$Q\in\Gamma_{\rm sonic}$, it follows from
part \eqref{thmappsub-i1} that every point of
$\Gamma_{\rm sonic}$ is exceptional. 
Therefore, the conclusion follows from the final assertion of Lemma \ref{arc}.
\end{proof}

\begin{figure}[!h]
	\centering
	\includegraphics[width=0.45\textwidth]{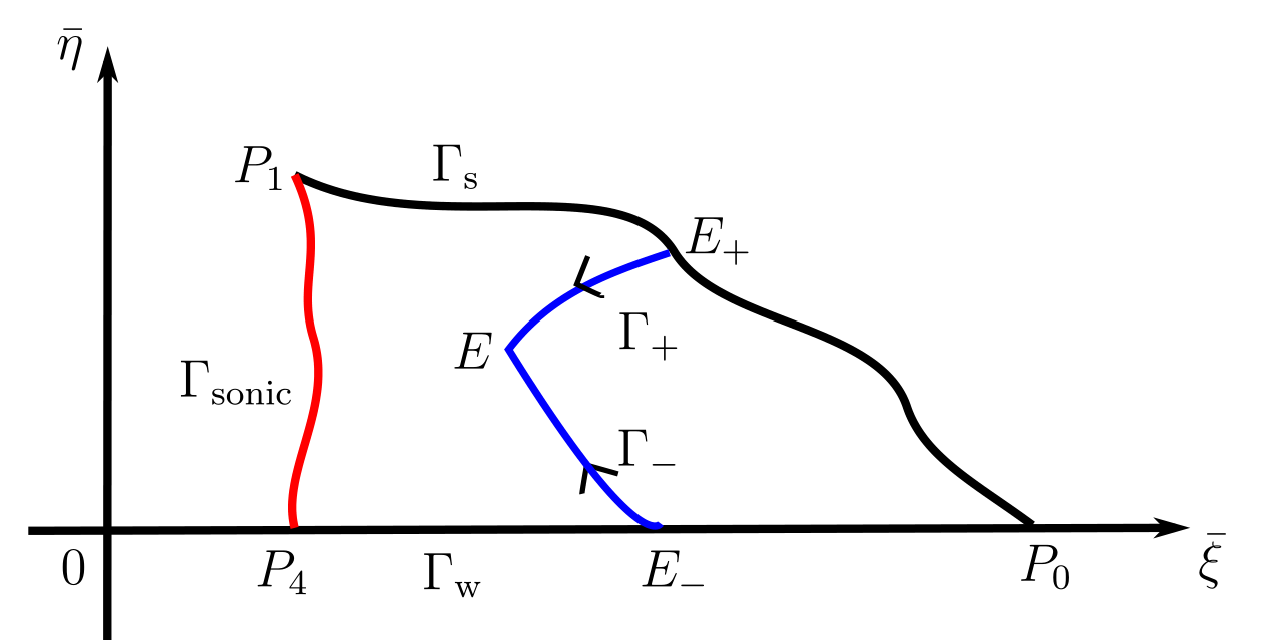}
	\caption{The characteristics in the supersonic region.\label{C+-}}
\end{figure}

Next, for Theorem \ref{thmapp}, we consider the solutions in the supersonic domain $\Omega^-$.
Since equations \eqref{self-similar}--\eqref{rhoeqself} are invariant under the rotation of the coordinates,
we choose the coordinates $(\bar{\xi}, \bar{\eta})$ as shown in Fig. \ref{C+-}.
By assumptions in Theorem \ref{thmapp}, we know  that $\bar{\partial}^{\pm} c\geq 0$ or $\bar{\partial}^{\pm} c\leq 0$ on $\Gamma_{\rm s}$.
Then we have
the following lemma:

\begin{lem}\label{+-barc>=0}
 Under the assumptions given in {\rm Theorem \ref{thmapp}},
  \begin{align}\label{pmdelta-+}
  \mbox{either}\,\,\,\,\, \bar{\partial}^{\pm} c\geq 0\,\, \text{in $\Omega^-$} \,\,\,\,\, \mbox{or} \,\,\,\,\,\, \bar{\partial}^{\pm} c\leq 0\,\, \text{in $\Omega^-$}.
  \end{align}
\end{lem}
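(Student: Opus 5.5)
The plan is to use the method of characteristic decompositions, as developed for pseudo-supersonic flows in \cite{LaiSheng,LiZheng}, to derive transport equations for $\bar{\partial}^{\pm}c$ along the characteristics in $\Omega^-$. Then we propagate the sign information from $\Gamma_{\rm shock}\cap\overline{\Omega^-}$ into the whole supersonic region.

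\emph{First step.} Write the system \eqref{1}--\eqref{2} in terms of the characteristic angles $\alpha,\beta$ from \eqref{cangle}. Parametrize the pseudo-velocity via the angle $\sigma$ and the Mach angle $\omega$ (with $\sin\omega=c/q$), so that $\alpha=\sigma+\omega$ and $\beta=\sigma-\omega$. Along the characteristic directions $\bar{\partial}^{\pm}$, the equations reduce to the compatibility relations, and one obtains expressions for $\bar{\partial}^{\pm}\sigma$ and $\bar{\partial}^{\pm}q$ in terms of $\bar{\partial}^{\pm}c$, using the Bernoulli law \eqref{2} (which gives $q\,\partial q+\frac{2}{\gamma-1}c\,\partial c=-\partial\varphi$).

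\emph{Second step.} Apply the commutator relation for the two directional derivatives,
\[
\bar{\partial}^{-}\bar{\partial}^{+}-\bar{\partial}^{+}\bar{\partial}^{-}
=\frac{\cos\beta\,\bar{\partial}^{+}\alpha-\cos\alpha\,\bar{\partial}^{-}\beta}{\sin(\alpha-\beta)}\,\bar{\partial}^{-}
-\frac{\sin\beta\,\bar{\partial}^{+}\alpha-\sin\alpha\,\bar{\partial}^{-}\beta}{\sin(\alpha-\beta)}\,\bar{\partial}^{+}
\]
(up to the exact form of the coefficients), and apply it to $c$. Substituting the relations from the first step yields a characteristic decomposition of the form
\[
\bar{\partial}^{\mp}\bar{\partial}^{\pm}c = A_{\pm}\,\bar{\partial}^{\pm}c + B_{\pm}\,(\bar{\partial}^{\pm}c)^2 + C_{\pm}\,\bar{\partial}^{\pm}c\,\bar{\partial}^{\mp}c .
\]
The key structural point is that each right-hand side is a multiple of $\bar{\partial}^{\pm}c$ itself, with coefficients that are bounded since $\varphi^-\in C^2$ and $\alpha-\beta=2\omega$ is bounded away from $0$ and $\pi$ inside $\Omega^-$. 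This computation is where I expect the main obstacle. The pseudo-steady terms, coming from $D\varphi=(u-\xi,v-\eta)$, produce extra inhomogeneous contributions, and one must verify that these cancel. This cancellation is known for the pseudo-steady isentropic Euler system; the work is to redo it carefully for \eqref{1}--\eqref{2}.

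\emph{Third step.} Integrate the decomposition along characteristics. Along a $C^{\mp}$ characteristic, $w=\bar{\partial}^{\pm}c$ satisfies a linear ODE $\frac{\dif w}{\dif s}=k(s)\,w$ with bounded $k$. Hence $w(s)=w(s_0)\exp\big(\int_{s_0}^{s}k\big)$, and the sign of $w$ is preserved. Every point of $\Omega^-$ should be reached by both families of characteristics issuing from $\Gamma_{\rm shock}\cap\overline{\Omega^-}$. This uses the geometry of Fig.~\ref{C+-}: the region between $P_0P_1$, $\Gamma_{\rm sonic}$ and the wall, together with the slip condition \eqref{4.2x} on $\Gamma_{\rm w}$, which lets a characteristic reflecting off the wall carry the sign over to the other family. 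If some points of $\Omega^-$ are only reached after a reflection on $\Gamma_{\rm w}$, I would check that the reflection relation on the wall maps $\bar{\partial}^{+}c$ to a positive multiple of $\bar{\partial}^{-}c$, which preserves the common sign.

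Combining these steps gives \eqref{pmdelta-+}. Near $\Gamma_{\rm sonic}$, where $\alpha-\beta\to0$, the statement is only required in the open region $\Omega^-$, so no uniform control up to the sonic curve is needed.
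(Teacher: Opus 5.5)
The gap is in your Step~3. Steps~1--2 are fine: they aim at exactly the decompositions the paper quotes from \cite{LaiSheng}, namely $c\,\bar{\partial}^{+}\bar{\partial}^{-}c=\bar{\partial}^{-}c\,(\cdots)$ and $c\,\bar{\partial}^{-}\bar{\partial}^{+}c=\bar{\partial}^{+}c\,(\cdots)$, and the sign propagation via the linear ODE along characteristics is also what the paper does. But your claim that every point of $\Omega^-$ is reached by both families of characteristics issuing from $\Gamma_{\rm shock}$ is false in the configuration of Fig.~\ref{C+-}. Through $E\in\Omega^-$, the $\Gamma_+$-characteristic meets the shock, but the $\Gamma_-$-characteristic meets the wall $\Gamma_{\rm w}$. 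Since $\bar{\partial}^{+}c$ is transported along $\Gamma_-$, its sign at \emph{every} point of $\Omega^-$ is inherited from its values on $\Gamma_{\rm w}$, and the hypotheses of Theorem~\ref{thmapp} say nothing there. So the wall is not a contingency for ``some points''. The boundary relation on $\Gamma_{\rm w}$ is the one ingredient of the lemma that cannot be cited, and you leave it as ``I would check''.

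The missing relation is $\bar{\partial}^{+}c=\bar{\partial}^{-}c$ on $\Gamma_{\rm w}$. It comes from differentiating the slip condition along the wall, not from a reflection law for characteristics.
\begin{itemize}
\item Since $\Gamma_{\rm w}$ is a straight streamline, $\sigma\equiv\theta_{\rm w}$ on it, so $U\sigma_\xi+V\sigma_\eta=0$ there.
\item The identity \eqref{perpc==}, $\partial^{\perp}c=\frac{\gamma-1}{2c}q^2(U\sigma_\xi+V\sigma_\eta)$, uses only \eqref{1.1}, the Bernoulli law, and $\partial^{\perp}\varphi=0$. It therefore holds at every point, not only on $\Gamma_{\rm sonic}$, and gives $\partial^{\perp}c=0$ on $\Gamma_{\rm w}$.
\item Writing $\alpha=\sigma+\delta$, $\beta=\sigma-\delta$ with $\sin\delta=c/q$, one has $\bar{\partial}^{+}c-\bar{\partial}^{-}c=-\frac{2c}{q^2}\partial^{\perp}c$, which yields the relation.
\end{itemize}

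The argument must then be run in a definite order:
\begin{enumerate}
\item Integrate the first decomposition along $\Gamma_+$ from the shock to get $\bar{\partial}^{-}c\ge0$ in $\Omega^-$, hence on $\Gamma_{\rm w}$ by continuity of $Dc$ up to the boundary.
\item Conclude $\bar{\partial}^{+}c=\bar{\partial}^{-}c\ge0$ on $\Gamma_{\rm w}$.
\item Integrate the second decomposition along $\Gamma_-$ from the wall.
\end{enumerate}

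A smaller slip: near $\Gamma_{\rm sonic}$, $c/q\to1$ gives $\delta\to\frac{\pi}{2}$, i.e.\ $\alpha-\beta\to\pi$, not $0$. The coefficients of the decomposition then blow up like $1/\cos^2\delta$. This is why one integrates only along characteristic segments joining interior points to the shock or to the wall.
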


\begin{proof}
For any point $E\in \Omega^-$, let $\Gamma_+$ be the positive characteristic curve passing through $E$ and
intersecting with $\Gamma_{\rm s}$ at point $E_+$,
and let $\Gamma_-$ be the negative characteristic curve passing through $E$ and intersecting with $\Gamma_{\rm w}$ at point $E_-$.
Choose the direction of the positive characteristic curve $\Gamma_+$ from $E_+$ to $E$,
and the direction of the negative characteristic curve $\Gamma_-$ from $E_-$ to $E$.
A direct calculation yields
\begin{align}
  c \bar{\partial}^+ \bar{\partial}^- c &= \bar{\partial}^- c
  \big( \sin 2\delta + \frac{\gamma +1}{2(\gamma-1)\cos^2\delta} \bar{\partial}^- c + \frac{\gamma+1 - 2\sin^2 2\delta}{2(\gamma-1)\cos^2\delta}\bar{\partial}^+ c\big),\label{c+-}\\
   c \bar{\partial}^- \bar{\partial}^+ c &= \bar{\partial}^+ c
  \big( \sin 2\delta + \frac{\gamma +1}{2(\gamma-1)\cos^2\delta} \bar{\partial}^+ c + \frac{\gamma+1 - 2\sin^2 2\delta}{2(\gamma-1)\cos^2\delta}\bar{\partial}^- c\big),\label{c-+}
\end{align}
where $\sin\delta\defs \frac{c}{q}$ and $\delta \in(0, \frac{\pi}{2})$ in the supersonic region ({\it cf.} (2.27) in \cite{LaiSheng}).
Let
$$
\mathcal{A}\defs  \frac{1}{c}\big( \sin 2\delta + \frac{\gamma +1}{2(\gamma-1)\cos^2\delta} \bar{\partial}^- c
+ \frac{\gamma+1 - 2\sin^2 2\delta}{2(\gamma-1)\cos^2\delta}\bar{\partial}^+ c\big).
$$

Without loss of generality, we choose the sign of $\partial^\pm c$ on $\Gamma_s$ as $\bar{\partial}^{\pm} {c}\geq 0$ on $\Gamma_{\rm s}\cap\overline{\Omega^-}$.
Then it follows from \eqref{c+-} that
\begin{align}\label{Einter}
  \bar{\partial}^- c(E) =  e^{\int_{E_+}^E \mathcal{A}\,\dif S } \bar{\partial}^- c(E_+)\geq 0,
\end{align}
where $\int_{E_+}^E (\,\cdot\,)\,\dif S$ represents the integration along the positive characteristic
curve $\widetilde{E_+ E}$ from $E_+$ to $E$.
By the arbitrariness of point $E\in\Omega^-$, we see that
 $\bar{\partial}^- c\geq 0$ in $\Omega^-$ if $\bar{\partial}^- c\geq 0$ on $\Gamma_{\rm s}$.

Next, we consider the sign of $\bar{\partial}^+c$ in $\Omega^-$.
By the slip boundary condition \eqref{4.2x} on $\Gamma_{\rm w}$, \emph{i.e.}, $\frac{V}{U} = \tan\theta_{\rm w}$
with constant $\theta_{\rm w}$, we have
\begin{align}\label{Gammaw=}
  (U \sigma_{\xi} + V\sigma_{\eta})|_{\Gamma_{\rm w}} =0,
\end{align}
where  $\sigma$ is defined in \eqref{4.7x}.
Then  \eqref{perpc==} and \eqref{Gammaw=} yield
\begin{align}\label{perpCGw=0}
  {\partial}^{\perp} c|_{\Gamma_{\rm w}}  =0.
\end{align}

By \eqref{barpartial+-}, we have
\begin{align}\label{+-ccperpcx}
\bar{\partial}^+ c  - \bar{\partial}^- c
&= (\cos\alpha - \cos\beta)c_{\xi}  + (\sin \alpha - \sin \beta)c_{\eta}\notag\\
&= 2 \sin(\frac{\alpha-\beta}{2})\big( \cos(\frac{\alpha+\beta}{2}) c_{\eta} -  \sin(\frac{\alpha +\beta}{2})c_{\xi}\big).
\end{align}
By \eqref{cangle}, we obtain
\begin{align}
  \tan(\alpha + \beta) &= \frac{\tan\alpha +\tan\beta}{1-\tan\alpha \tan\beta} 
=\frac{2UV}{U^2 -V^2},\label{tanalpha+beta=}\\
   \tan(\alpha - \beta) &= \frac{\tan\alpha - \tan\beta}{1+ \tan\alpha \tan\beta} 
= \frac{2c\sqrt{U^2 +V^2 -c^2}}{U^2 + V^2 - 2c^2}.\label{tanalpha-beta}
\end{align}
Then, by straightforward calculation, we have
\begin{align}
\cos(\frac{\alpha +\beta}{2}) =& \sqrt{\frac{1+\cos(\alpha +\beta)}{2}} 
= \frac{U}{q},\label{cosalpha+beta2}\\
\sin(\frac{\alpha +\beta}{2}) =&\sqrt{\frac{1-\cos(\alpha +\beta)}{2}} 
= \frac{V}{q},\\
\sin(\frac{\alpha -\beta}{2}) = &\sqrt{\frac{1-\cos(\alpha -\beta)}{2}} 
= \frac{c}{q}. \label{sinalpha-beta2}
\end{align}

We note that only the positive sign + is considered in \eqref{cosalpha+beta2}--\eqref{sinalpha-beta2}. This is justified as follows:
Without loss of generality, as shown in Fig. \ref{self}, $\alpha$ and $\beta$ are defined as the angles between the $\Gamma_{\pm}$
characteristic directions and the positive $\xi$-axis,
where $\alpha$ and $\beta$ satisfy $0 < \alpha - \beta < \pi$.
That is, the direction of the $\Gamma_{\pm}$ characteristics is defined so that the Mach angle $\delta\defs \frac{\alpha-\beta}{2}$ is an acute angle.
By direct computation, it is direct to see that $\sigma \defs \frac{\alpha+\beta}{2}$ is the angle between the direction
of velocity $(U,V)$ and the positive $\xi$-axis.
\begin{figure}[!h]
	\centering
	\includegraphics[width=0.45\textwidth]{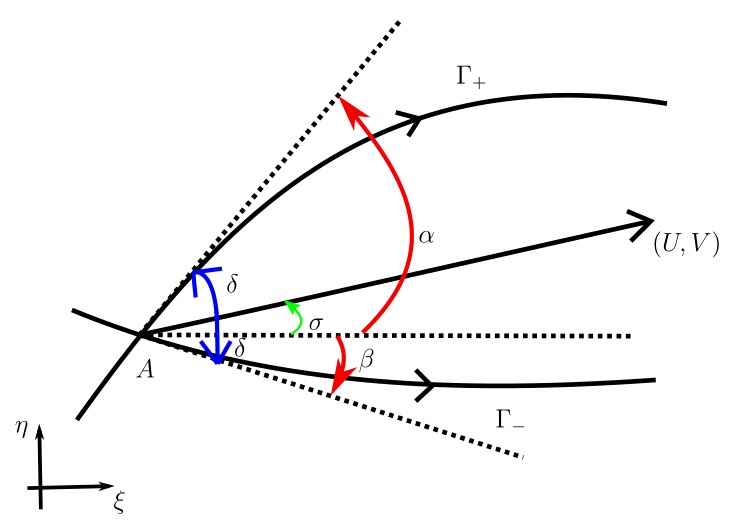}
	\caption{Characteristic directions and characteristic angles. \label{self}}
\end{figure}

Therefore, \eqref{+-ccperpcx} and \eqref{cosalpha+beta2}--\eqref{sinalpha-beta2} imply that
\begin{align}\label{+-ccperpc}
\bar{\partial}^+ c  - \bar{\partial}^- c
=&2 \frac{c}{q}\big( \frac{U}{q} c_{\eta} -  \frac{V}{q} c_{\xi}\big)
= -\frac{2c}{q^2} {\partial}^{\perp}c.
\end{align}
Thus, \eqref{perpCGw=0}, \eqref{+-ccperpc}, and $\bar{\partial}^- c\geq 0$ in $\Omega^-$ imply
\begin{align}\label{+cGW0}
  \bar{\partial}^+c|_{\Gamma_{\rm w}}\geq 0.
\end{align}
Then, applying \eqref{c-+} and \eqref{+cGW0}, we can deduce that $\bar{\partial}^+c\geq 0$ in $\Omega^-$.

Similar arguments yield that $\bar{\partial}^{\pm} c\leq 0$ in $\Omega^-$ if $\bar{\partial}^{\pm} c\leq 0$ on $\Gamma_{\rm s}$.
\end{proof}

We are now ready to prove Theorem \ref{thmapp}, based on Theorems \ref{theorem1} and \ref{theoremnew}.

\begin{proof}[Proof of {\rm Theorem \ref{thmapp}}]
Let $\delta=\frac{\alpha-\beta}{2}$ and $\sigma=\frac{\alpha+\beta}{2}$. 
Because $q=c$ on $\Gamma_{\rm sonic}$, \eqref{sinalpha-beta2} yields $\delta =\frac{\pi}{2}$  on $\Gamma_{\rm sonic}$.
Then $\alpha=\sigma + \frac{\pi}{2}$ and $\beta=\sigma -\frac{\pi}{2}$ on $\Gamma_{\rm sonic}$, so that
\begin{align}
\bar{\partial}^+|_{\Gamma_{\rm sonic}}&=
 (\cos(\sigma + \frac{\pi}{2})\partial_{\xi} + \sin(\sigma + \frac{\pi}{2})\partial_{\eta})\mid_{\Gamma_{\rm sonic}}
= -\frac{1}{c} \partial^{\perp}|_{\Gamma_{\rm sonic}},\label{+perpsc}\\
\bar{\partial}^-|_{\Gamma_{\rm sonic}}
&= (\cos(\sigma - \frac{\pi}{2})\partial_{\xi} + \sin(\sigma - \frac{\pi}{2})\partial_{\eta})\mid_{\Gamma_{\rm sonic}}
=  \frac{1}{c}\partial^{\perp}|_{\Gamma_{\rm sonic}}.\label{-perpsc}
\end{align}
Then we have
\begin{align}
-\bar{\partial}^+|_{\Gamma_{\rm sonic}}=\bar{\partial}^-|_{\Gamma_{\rm sonic}} = \frac{1}{c}\partial^{\perp}|_{\Gamma_{\rm sonic}}.
\end{align}

Thus, by Lemma \ref{+-barc>=0} and the fact that
$c\in C^1(\overline{\Omega^-})$,
we have
  \begin{align*}
  {\partial}^{\perp}c =0\qquad \text{on $\Gamma_{\rm sonic}$}.
  \end{align*}
Applying Theorem \ref{theoremnew} and \eqref{Qnpsi<} for any point $Q\in\Gamma_{\rm sonic}$,
we know that $(U,V)(Q)$ is a normal to $\Gamma_{\rm sonic}$ at $Q\in \Gamma_{\rm sonic}$.
By Theorem \ref{theorem1} and the arbitrariness of the sonic point $Q$,
we conclude that $\Gamma_{\rm sonic}$ lies on a circle.
\end{proof}

\appendix
\section{}
It follows from Theorem \ref{thmappsub} (or Theorem \ref{thmapp}) and Remark \ref{rem4.3x} that
the optimal regularity of solutions across $\Gamma_{\rm sonic}$ given in Theorem 3.1
and Theorem 4.1 in \cite{Bae-Chen-Feldman} holds in the present case.
In fact, we can show assumption $(3.3)$ in \cite[Page 516]{Bae-Chen-Feldman} by the maximum principle via the following lemma:

\begin{lem}\label{lem4.1xw}
Under the assumptions given in {\rm Theorem \ref{thmappsub}} or {\rm Theorem \ref{thmapp}},
for $\psi$ defined by \eqref{defpsix} in {\rm Remark \ref{rem4.3x}}, $\psi$ satisfies the equation of the form:
\begin{equation}\label{A1}
(2x-a\psi_x+O_1)\psi_{xx}+O_2\psi_{xy}+(b+O_3)\psi_{yy}-(1+O_4)\psi_x+O_5\psi_y=0.
\end{equation}
Moreover, we assume that, some constants $N>0$, $M>0$, and $\beta\in(0,1)$,
\begin{align}
&\frac{|O_1(x,y)|}{x^2},\,\frac{|O_k(x,y)|}{x}\leq N\qquad\mbox{for }k=2,\cdots,5,\label{A2a}\\
&\frac{|DO_1(x,y)|}{x},\,|DO_k(x,y)|\leq N\qquad\mbox{for }k=2,\cdots,5,\label{A3}\\
&-Mx\leq\psi_x\leq\frac{2-\beta}{a}x.\label{A4}
\end{align}
If $\psi\geq0$ on the transonic shock $P_1P_2$, then
  \begin{align}\label{psi>0}
    \psi>0\qquad \mbox{in $\Omega^+$}.
  \end{align}
\end{lem}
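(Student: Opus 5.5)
We argue by a comparison (maximum principle) argument in the $(x,y)$--coordinates of Remark \ref{rem4.3x}, where $\Gamma_{\rm sonic}=\{x=0\}$, $x>0$ in $\Omega^+$, $\psi=0$ and $D\psi=0$ on $\Gamma_{\rm sonic}$, and $\psi\ge0$ on $P_1P_2$ by hypothesis.

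\textbf{Step 1: derivation of \eqref{A1}.} Write \eqref{self-similar} for $\psi$ in polar coordinates centered at $(u_2,v_2)$, with $c^2=c_2^2$ on the arc. Expanding $c^2-(\psi_x+c_2-x)^2$ and using $c^2=c_2^2+(\gamma-1)(\ldots)$, which is linear in $\psi,\psi_x$ to leading order, gives the principal coefficient $2x-a\psi_x$ with $a=\gamma+1$ (after dividing by $c_2$), up to terms $O_1=O(x^2)$. The remaining coefficients $O_k$ are built from $\psi,D\psi$, and they vanish to first order at $x=0$ because $\psi=D\psi=0$ there and $\psi\in C^{1,1}$ near the arc. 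This is the computation of \cite{Bae-Chen-Feldman}, so we only track orders; \eqref{A2a}--\eqref{A3} are then assumed.

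\textbf{Step 2: ellipticity of the linearized equation.} Regard \eqref{A1} as a linear equation $L\psi=0$ with coefficients frozen at $\psi$. We need $2x-a\psi_x+O_1>0$ in $\Omega^+$ near the arc. By \eqref{A4}, $2x-a\psi_x\ge\beta x$, and $|O_1|\le Nx^2$, so for $x$ small the coefficient is at least $\beta x/2>0$. Away from the arc, $L$ is uniformly elliptic because $\varphi$ is subsonic in $\Omega^+$. Hence $L$ is degenerate elliptic on $\overline{\Omega^+}$ and strictly elliptic in its interior. The zeroth-order coefficient of $L$ is $0$, so the weak and strong maximum principles apply.

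\textbf{Step 3: minimum principle.} Suppose $\min_{\overline{\Omega^+}}\psi\le0$. We examine the boundary of $\Omega^+$ piece by piece.
\begin{enumerate}
\item[(a)] On $\Gamma_{\rm sonic}$ we have $\psi=0$.
\item[(b)] On $P_1P_2$ we have $\psi\ge0$ by hypothesis.
\item[(c)] On the wedge $\Gamma_{\rm w}$ and the symmetry line, the slip condition gives an oblique (indeed conormal) derivative condition $\partial_\nu\psi=-\partial_\nu\varphi_2$. Since $(u_2,v_2)$ lies on the appropriate line, this right-hand side vanishes on $\Gamma_{\rm w}$ and has a definite sign on $\Gamma_{\rm sym}$, so Hopf's lemma excludes a negative minimum there.
\end{enumerate}
The corner points $P_2$ and $P_3$ are handled by the standard corner arguments of \cite{Chen-Feldman4}. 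Hence $\psi\ge0$ in $\Omega^+$. By the strong maximum principle, if $\psi$ vanished at an interior point then $\psi\equiv0$, contradicting $\psi>0$ somewhere; the latter holds since $\psi_{xx}=\frac{1}{\gamma+1}>0$ near the arc. This gives \eqref{psi>0}.

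\textbf{Main obstacle.} The principal difficulty is the degenerate boundary $x=0$. A negative minimum could approach $\Gamma_{\rm sonic}$, where the maximum principle gives no Hopf-type control. To handle this, I would compare $\psi$ with $-\delta x^{2-\beta/2}$, or use a barrier $w=\psi+\delta x$ on a strip $\{0<x<x_0\}$. One checks that $Lw=\delta(-(1+O_4))<0$, so $w$ cannot attain an interior minimum. Its boundary values on $x=0$ and on $x=x_0$ are controlled: on $x=x_0$ by the uniform ellipticity region. Letting $\delta\to0$ closes the argument.
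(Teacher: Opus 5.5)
Your Steps 2--3 are essentially the paper's argument. The paper records $\psi=0$ on $\Gamma_{\rm sonic}$, $\psi\ge0$ on $P_1P_2$, $D\psi\cdot\mathbf{n}_{\rm w}=0$ on $\Gamma_{\rm w}$, and $\psi_\eta=-u_2\tan\theta_{\rm w}<0$ on $\Gamma_{\rm sym}$. It notes ellipticity off $\Gamma_{\rm sonic}$ and concludes by the strong maximum principle and Hopf's lemma.

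\textbf{The step showing $\psi\not\equiv0$ is circular.} The values $\psi_{xx}(0,y)=\frac{1}{\gamma+1}$ and $\psi_{xy}=\psi_{yy}=0$ quoted in Remark \ref{rem4.3x} are the \emph{conclusion} of Theorem 3.1 of \cite{Bae-Chen-Feldman}. Its hypothesis (3.3) is exactly the positivity of $\psi$ that this lemma is meant to supply, so you cannot use those values here. Use the boundary data you already have instead: $\psi_\eta=-v_2=-u_2\tan\theta_{\rm w}<0$ on $\Gamma_{\rm sym}$ rules out $\psi\equiv0$. You should also state this sign explicitly rather than writing ``a definite sign''. It is the favourable sign, because at a minimum point on $\{\eta=0\}$ one would need $\psi_\eta\ge0$, so no Hopf argument is needed there. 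Since $\varphi^+\in C^{1,\alpha}(\overline{\Omega^+})$, the strict inequality persists at the corner $P_3$, where $(0,1)$ points into $\Omega^+$. At $P_2\in P_1P_2$ one has $\psi(P_2)\ge0$. So no corner arguments from \cite{Chen-Feldman4} are needed.

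\textbf{The degenerate arc is not an obstacle; drop the barrier paragraph.} Because $\psi\in C(\overline{\Omega^+})$ and $\psi=0$ on $\overline{\Gamma_{\rm sonic}}$, a negative minimum over the compact set $\overline{\Omega^+}$ is attained at a point off $\overline{\Gamma_{\rm sonic}}$. A minimizing sequence tending to the arc has limit value $0$, and a Dirichlet part of the boundary needs no Hopf-type control. Hopf's lemma is used only at a minimum point in the relative interior of $\Gamma_{\rm w}$, and such a point has a neighbourhood of uniform ellipticity.

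As written, the barrier argument would not close anyway. You give no sign for $w=\psi+\delta x$ on $\{x=x_0\}$; ``controlled by the uniform ellipticity region'' provides none. Nor do you give a sign on the shock and wedge parts of the strip's boundary. Getting such a sign would already require the global minimum principle you are trying to prove.
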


\begin{rem}
In fact, condition $\psi\geq0$ on the transonic shock $P_1P_2$ is property {\rm (2.6.3)}
in {\rm \cite[Page 23]{Chen-Feldman4}},
which is crucial in the definition of the admissible solution in {\rm Definition 8.1.1} in {\rm \cite{Chen-Feldman4}}.
\end{rem}

\begin{proof} If the assumptions in Theorem \ref{thmappsub}
or the assumptions in Theorem \ref{thmapp} hold, $\Gamma_{\rm sonic}$ is an arc and all the sonic points are exceptional.
By Remark \ref{rem4.3x}, $u_2$ and $v_2$ are constants, and
\begin{align}\label{psi=0}
  \psi =0 \qquad \text{on $\Gamma_{\rm sonic}$}.
\end{align}
By \eqref{4.2x}, we have
\begin{align}\label{Dpsinw}
  D\psi \cdot \mathbf{n}_{\rm w} =0 \qquad\mbox{on $\Gamma_{\rm w}$}.
\end{align}
On the symmetry line $\{\eta=0, \xi<0\}$, it follows from \eqref{4.2x} that
\begin{align}\label{psieta<0}
  \psi_{\eta} =
-u_2 \tan\theta_{\rm w} <0\qquad\,  \text{on $\{\eta=0, \xi<0\}$}.
\end{align}

By assumptions \eqref{A2a}--\eqref{A4}, equation \eqref{A1} is elliptic
in $\overline{\Omega^+}\backslash\Gamma_{\rm sonic}$.
Therefore, by conditions \eqref{psi=0}--\eqref{psieta<0}, the assumption that $\psi\geq0$ on the transonic shock,
the strong maximum principle, and the Hopf lemma,
\eqref{psi>0} holds.
\end{proof}

\bigskip
\noindent
{\bf Acknowledgments.}
The research of
Gui-Qiang G. Chen was supported in part by the UK Engineering and Physical Sciences Research Council Awards
EP/V008854/1 and EP/V051121/1.
The research of Mikhail Feldman was supported in part by the National Science Foundation Under DMS-2054689   and DMS-2219391, and Steenbock Professorship Award at UW-Madison.
The research of Wei Xiang was supported in part by the
Research Grants Council of the HKSAR, China (Project No. CityU 11300021, CityU 11311722, CityU 11305523, and CityU 11305625).

\end{document}